\newcommand{\norm}[1]{\lVert #1 \rVert}
\def\ie{i.e.}
\def\iid{i.i.d.}
\def\tbZ{\widetilde{\bZ}}
\def\coormap{T}
\newcommand\seqspace[1]{#1^\Zset}
\newcommand\zeroseq[1]{{\boldsymbol{0}}_{{\mathsf#1}^\Zset}}
\def\tailmeasure{{\boldsymbol{\nu}}}
\def\equallaw{\stackrel{d}{=}}
\def\setE{\mathsf{E}}
\def\setEzero{\setE\setminus\{\zero{\setE}\}}
\def\setF{\mathsf{F}}
\def\setFzero{\setF\setminus\{\zero{\setF}\}}
\def\setS{\mathsf{S}}
\def\Nset{\mathbb N}
\def\Zset{\mathbb Z}
\def\Rset{\mathbb R}
\def\esp{\mathbb E}
\def\pr{\mathbb P}
\def\mca{\mathcal{A}}
\def\mce{\mathcal{E}}
\def\mcf{\mathcal{F}}
\def\mcm{\mathcal{M}}
\def\mcn{\mathcal{N}}
\def\mct{\mathcal{T}}
\def\rme{\mathrm{e}}
\def\rmd{\mathrm{d}}
\def\bQ{\boldsymbol{Q}}
\def\bx{\boldsymbol{x}}
\def\bs{\boldsymbol{s}}
\def\bX{\boldsymbol{X}}
\def\by{\boldsymbol{y}}
\def\bY{\boldsymbol{Y}}
\def\bZ{\boldsymbol{Z}}
\def\bP{\boldsymbol{P}}
\def\bxi{\boldsymbol\xi}
\def\bTheta{\boldsymbol\Theta}
\def\bszero{\boldsymbol{0}}
\def\nullmeasure{\boldsymbol{0}_\mcm}
\newcommandx\largestpoint[2][2=]{\left\vvvert#1\right\vvvert_{\mathsf{#2}}}  
\def\shift{B}
\def\fidi{\stackrel{fi.di.}\longrightarrow}
\newcommand\convmzero[1]
\newcommandx\pseudonorm[2][2=]{\|#1\|_{\mathsf{#2}}} 
\newcommandx\dist[1][1=]{{\mathrm{d}_{\mathsf{#1}}}} 
\newcommand\zero[1]{\boldsymbol{0}_{\mathsf{#1}}} 
\newcommandx\sequence[3][2=\Zset,3=j]{\{#1_{#3},#3\in#2\}}
\newcommand\ind[1]{\mathbbm{1}{\left\{#1\right\}}}
\numberwithin{equation}{section}
\theoremstyle{plain}
\newtheorem{theorem}{Theorem}[section]
\newtheorem{lemma}[theorem]{Lemma}
\newtheorem{corollary}[theorem]{Corollary}
\newtheorem{proposition}[theorem]{Proposition}
\newtheorem{definition}[theorem]{Definition}
\theoremstyle{remark}
\newtheorem{remark}[theorem]{Remark}
\crefname{theorem}{Theorem}{Theorems}
\begin{document}

\begin{frontmatter}

\title{Tail measure and  spectral tail process of regularly varying time series}
\runtitle{Tail measure of regularly varying time series}


\author{\fnms{Cl\'ement} \snm{Dombry}\corref{}\ead[label=e1]{clement.dombry@univ-fcomte.fr}}
\thankstext{t1}{The research of Cl\'ement Dombry is partially supported by the Bourgogne Franche-Comt\'e region (grant OPE-2017-0068).}
\address{Cl\'ement Dombry\\ Universit{\'e} Bourgogne Franche-Comt{\'e},\\ Laboratoire de Math{\'e}matiques de Besan\c{c}on,\\ UMR CNRS 6623,\\ 16 route de Gray, \\ 25030 Besan{\c c}on Cedex, France.\\ \printead{e1}}
\affiliation{Universit{\'e} Bourgogne Franche-Comt\'e}
\and
\author{\fnms{Enkelejd} \snm{Hashorva}\ead[label=e2]{enkelejd.hashorva@unil.ch}}
\thankstext{t2}{ The research of Enkelejd Hashorva is partially supported by the SNSF Grant  no. 200021-175752/1 }
\address{ Enkelejd Hashorva, \\ Universit{\'e} de Lausanne,\\ D{\'e}partement de sciences actuarielles,\\ Quartier  UNIL-Chamberonne,\\ B{\^a}timent Extranef,\\ 1015 Lausanne, Switzerland.\\\printead{e2}}
\affiliation{Universit{\'e} de Lausanne}
\and
\author{\fnms{Philippe} \snm{Soulier}\ead[label=e3]{philippe.soulier@u-paris10.fr}}
\thankstext{t3}{ The research of Philippe Soulier is partially supported by LABEX MME-DII.}
\address{Philippe Soulier,\\ Universit\'e    Paris Nanterre,\\ D{\'e}partement de Math{\'e}matique et informatique,\\ Laboratoire MODAL'X,\\ 92000 Nanterre, France.\\ \printead{e3}}
\affiliation{Universit\'e    Paris Nanterre}

\runauthor{C.~Dombry, E.~Hashorva and P.~Soulier}

\begin{abstract}
The goal of this paper is an exhaustive investigation of the link between the tail measure of a
  regularly varying time series and its spectral tail process, independently introduced in
  \cite{samorodnitsky:owada:2012} and \cite{basrak:segers:2009}. Our main result is to prove in an
  abstract framework that there is a one to one correspondance between these two objets, and given
  one of them to show that it is always possible to build a time series of which it will be the tail
  measure or the spectral tail process. For non negative time series, we recover results explicitly
  or implicitly known in the theory of max-stable processes.
\end{abstract}

\begin{keyword}[class=MSC]
\kwd[Primary ]{60G70}
\kwd[; secondary ]{}
\end{keyword}

\begin{keyword}
\kwd{regularly varying time series}
\kwd{tail measure}
\kwd{spectral tail process}
\kwd{time change formula}
\end{keyword}

\end{frontmatter}

\section{Introduction}
Regular variation is a fundamental concept for the extreme value analysis of time series. See for
instance \cite{kulik:2016} and the articles in this collection for a recent overview.  For
stationary multivariate time series, \cite{basrak:segers:2009} proved that regular variation is
equivalent to the existence of the so-called tail and spectral tail processes which capture the
entire tail behaviour of the series. An important property of the spectral tail process is the time
change formula also proved by \cite{basrak:segers:2009}. Recently, \cite{segers:zhao:meinguet:2017}
and \cite{samorodnitsky:owada:2012} introduced the tail measure of a regularly varying, but not
necessarily stationary, time series. The tail measure is a homogeneous measure on the sequence space
and it is shift-invariant for a stationary time series. This is an advantage with respect to the
tail process which is never stationary. In addition, the tail process can be recovered from the tail
measure and it appears that the time change formula is a straightforward consequence of the shift
invariance of the tail measure.

A very natural question arises: given the tail process or the spectral tail process of a time
series, is it possible to reconstruct explicitly the tail measure? Furthermore, since the tail and
spectral tail processes can be defined solely in terms of the tail measure, given a process
satisfying the time change formula, is it possible to define a tail measure and a time series of
which it is the spectral tail process? The latter question was recently solved positively by
\cite{janssen:2017} who shows that given a process satisfying the time change formula, there exists
a time series of which it is the spectral tail process.

The purpose of this paper is twofold.  In \Cref{sec:tailmeasurerepresenation} we will attempt to present a systematic theory of tail measures on a abstract complete separable metric space and their
representations, with a particular focus on shift-invariant tail measures.  This is done by means
only of measure theory and the homogeneity and shift invariance properties of a tail measure,
without any appeal to regular variation or probabilistic asymptotic arguments. We establish in
\Cref{theo:tiltshift} the stochastic representation of tail measures with a characterization of the
shift invariance. These stochastic representations have a property similiar to the time change
formula which we refer to as the tilt shift formula.  The spectral tail process associated to the
tail measure is then related to its stochastic representation and we prove that there is a
one-to-one correspondance between spectral tail processes, stochastic representations and shift
invariant tail measures in \Cref{theo:equivalent-tailmeasure-spectraltailprocess}.

In \Cref{sec:dissipative}, we discuss dissipative representations of tail measures and characterize
the existence of such representations, which are deeply related to the mixed moving average
representation of max-stable processes. We conclude this general investigation of tail measure by
introducing maximal indices which extend the candidate extremal index of \cite{basrak:segers:2009}.

In Section~\ref{sec:regvar-ts}, the abstract tail measures introduced in
\Cref{sec:tailmeasurerepresenation} are related to be the tail measure of a regularly varying time
series, in particular max-stable processes - see  \cite{dH84},  \cite{KLP2} or  \cite{KLP1}.  The main result of this section is that we show that any shift-invariant homogeneous measure $\tailmeasure$ can be obtained as the tail measure of a
regularly varying stationary times series. Our construction relies on a Poisson particle system,
similarly to the representation of max-stable sequences, and on the regular variation of Poisson
point measures on abstract metric spaces. The main theoretical tool we use is the theory of $\mcm_0$
convergence on metric spaces and its application to regular variation, following
\cite{hult:lindskog:2006}. We also make use of the theory of convergence of random measures as set
out in \cite{kallenberg:2017}. Our main result extend the above mentioned result of
\cite{janssen:2017} to our more general framework.  The properties of the proposed class of
stationary regularly varying time series are then studied and we show in particular that they admit
extremal indices which coincide with the maximal indices introduced in \Cref{sec:maxindices}.

\section{Tail measures on a metric space}
\label{sec:tailmeasurerepresenation}

\subsection{Framework}
\label{sec:framework}
The mathematical setting is the following. Let $(\setE,\mce)$ be a measurable cone, that is a
measurable space together with a multiplication by positive scalars
\[
(u,\bx)\in (0,\infty)\times \setE \mapsto u \bx\in\setE \; , 
\]
which is measurable with respect to the product $\sigma$-field $\mathcal{B}(0,\infty)\otimes\mce/\mce$
and satisfies
\[
1\bx=\bx \; , \ \  u(v(\bx)) = (uv)\bx \; , \  \ u,v>0 \; , \ \ \bx\in\setE \; .
\]
We assume that the cone admits a zero element $\bszero_\setE\in\setE$ such that
$u\zero\setE=\zero\setE$ for all $u>0$ and that it is endowed with a pseudonorm, i.e. a measurable
function $\pseudonorm{\cdot}[\setE]: \setE \mapsto [0,\infty)$ such that
$\pseudonorm{u\bx}[\setE]=u\pseudonorm{\bx}[\setE]$ for all $u>0$, $\bx\in\setE$ and
$\pseudonorm{\bx}[\setE]=0$ implies $\bx=\zero\setE$. The triangle inequality is not required.

The space $\seqspace{\setE}$ of $\setE$-valued sequences is endowed with the cylinder
$\sigma$-algebra $\mcf=\mce^{\otimes \Zset}$ and a generic sequence is denoted
$\bx=(\bx_h)_{h\in\Zset}$.  The sequence identically equal to $\bszero_E$ is denoted by
$\zeroseq\setE$.  The backshift operator $\shift$ on $\seqspace\setE$ is defined by
$(\shift\bx)_h=\bx_{h-1}$, $\bx\in\seqspace\setE$, $h\in\Zset$. Its iterates are denoted $\shift^k$,
$k\in\Zset$.

Let $H: \seqspace\setE \mapsto [0,\infty]$ be an $\mcf$-measurable function. We say that $H$ is
homogeneous of order $\alpha\in\Rset$, or shortly $\alpha$-homogeneous, if $H(u\bx)=u^\alpha H(\bx)$
for all $u>0$, $\bx\in\seqspace\setE$.

The central object in this section is the notion of tail measure defined as follows.
\begin{definition}[Tail measure]\label{def:tailmeasure}
  \label{def:tail-measure}
  A tail measure with index $\alpha>0$ is a positive measure $\tailmeasure$ on $(\seqspace\setE,\mcf)$ with the
  following properties: 
  \begin{enumerate}[(i),wide=1pt]
  \item \label{item:zerozero} $\tailmeasure(\{\zeroseq\setE\})=0$;
  \item \label{item:standardization}  $\tailmeasure(\{\pseudonorm{\bx_0}[E]>1\})=1$; 
  \item \label{item:finipourtout-h}  $\tailmeasure(\{\pseudonorm{\bx_h}[E]>1\})<\infty$ for all $h\in\Zset$; 
  \item \label{item:homogeneity} $\tailmeasure$ is $\alpha$-homogeneous, that is
    $\tailmeasure(uA)=u^{-\alpha}\tailmeasure(A)$ for all $A\in\mcf$ and $u>0$.
  \end{enumerate}
  The tail measure $\tailmeasure$ is called shift-invariant  if furthermore
\begin{enumerate}[(i),wide=1pt]
  \item[(v)] $\tailmeasure(\shift A)=\tailmeasure(A)$ for all $A\in\mcf$. 
  \end{enumerate}
\end{definition}

The following connection of tail measures on $\seqspace{[0,\infty)}$ and max-stable process is
important.
\begin{remark}
  \label{rem:max-stable1}
  A time series $\bX=(\bX_h)_{h\in\Zset}$ is called $\alpha$-Fr\'echet max-stable if
  \[
  \Big(n^{-1/\alpha}\bigvee_{i=1}^n \bX^{(i)}_h\Big)_{h\in\Zset} \stackrel{d}= (\bX_h)_{h\in\Zset}
  \]
  where $\bX^{(i)}$, $i\geq 1$ are independent copies of $\bX$. de Haan's representation theorem
  \citep{dH84} implies that any $\alpha$-Fr\'echet max-stable sequence $\bX$ can be represented as
  \begin{equation}
    \label{def:max-stable}
    (\bX_h)_{h\in\Zset}\stackrel{d}=\Big(\bigvee_{i\geq 1} \mathbf{P}^{(i)}_h\Big)_{h\in\Zset}
  \end{equation}
  where $\sum_{i\geq 1} \delta_{\mathbf{P}^{(i)}}$ is a Poisson random measure on
  $\seqspace{[0,\infty)}$ with intensity $\tailmeasure$ called the exponent measure of
  $\bX$. Provided the marginal distribution of $\bX_0$ is standard $\alpha$-Fr\'echet, the exponent
  measure $\tailmeasure$ is a tail measure in the sense of \Cref{def:tail-measure}. Conversely, for
  any tail measure $\tailmeasure$ on $\seqspace{[0,\infty)}$, \Cref{def:max-stable} defines an
  $\alpha$-Fr\'echet max-stable sequence with $\bX_0$ following a standard $\alpha$-Fr\'echet
  distribution.
\end{remark}

The following lemma is 
	useful to characterize tail measures. According to Definition~\ref{def:tail-measure}, the restriction of a tail measure $\tailmeasure$ to the set $\{\pseudonorm{\bx_h}[E]>1\}$ is finite so that the Lemma allows to deal with  finite  measures in order to characterize $\tailmeasure$.  

\begin{lemma}
  \label{lem:determination-tailmeasure}
  Any tail measure $\tailmeasure$ is $\sigma$-finite and   uniquely determined by its restrictions to the sets
  $\{\pseudonorm{\bx_h}[E]>1\}$, $h\in\Zset$.
\end{lemma}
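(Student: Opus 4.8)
The plan is to prove the two assertions separately; both rely only on the $\alpha$-homogeneity~\ref{item:homogeneity} and on the fact that $\pseudonorm{\cdot}[E]$ vanishes only at $\zero\setE$. The computation I would record at the outset is the scaling of the basic sets: since scalar multiplication is coordinatewise and $\pseudonorm{(u\bx)_h}[E]=u\pseudonorm{\bx_h}[E]$, one has $\{\pseudonorm{\bx_h}[E]>u\}=u\{\pseudonorm{\bx_h}[E]>1\}$ for every $h\in\Zset$ and $u>0$, whence by~\ref{item:homogeneity}
\[
\tailmeasure(\{\pseudonorm{\bx_h}[E]>u\})=u^{-\alpha}\,\tailmeasure(\{\pseudonorm{\bx_h}[E]>1\})<\infty,
\]
the finiteness coming from~\ref{item:finipourtout-h}. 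I would also note that $\bx\mapsto u\bx$ is a measurable bijection with measurable inverse, so $uA\in\mcf$ whenever $A\in\mcf$ and the homogeneity relation may be applied to all the sets below.

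For $\sigma$-finiteness, I would observe that a sequence differs from $\zeroseq\setE$ precisely when one of its coordinates is nonzero, so that
\[
\seqspace\setE\setminus\{\zeroseq\setE\}=\bigcup_{h\in\Zset}\bigcup_{n\ge1}\{\pseudonorm{\bx_h}[E]>1/n\}.
\]
This is a countable union of sets of finite $\tailmeasure$-measure by the display above with $u=1/n$, and $\tailmeasure(\{\zeroseq\setE\})=0$ by~\ref{item:zerozero}; hence $\tailmeasure$ is $\sigma$-finite.

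For the uniqueness statement --- that $\tailmeasure$ is recovered from the values $\tailmeasure(A\cap\{\pseudonorm{\bx_h}[E]>1\})$, $A\in\mcf$, $h\in\Zset$ --- the engine is a homogeneity transfer: if $B\in\mcf$ satisfies $B\subseteq\{\pseudonorm{\bx_h}[E]>1/n\}$, then $nB\subseteq\{\pseudonorm{\bx_h}[E]>1\}$ and $\tailmeasure(B)=n^{\alpha}\tailmeasure(nB)$ by~\ref{item:homogeneity}, so the restriction of $\tailmeasure$ to $\{\pseudonorm{\bx_h}[E]>1/n\}$ is determined by its restriction to $\{\pseudonorm{\bx_h}[E]>1\}$. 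Now fix $A\in\mcf$, enumerate $\Zset=\{h_1,h_2,\dots\}$, and put $F_{N,n}=\bigcup_{i\le N}\{\pseudonorm{\bx_{h_i}}[E]>1/n\}$, a finite union of finite-measure sets increasing to $\seqspace\setE\setminus\{\zeroseq\setE\}$ as $N,n\to\infty$. Writing $A\cap F_{N,n}$ as a finite disjoint union by peeling off the sets $\{\pseudonorm{\bx_{h_i}}[E]>1/n\}$ one at a time, $\tailmeasure(A\cap F_{N,n})$ becomes a finite sum of measures of measurable subsets of single level-$1/n$ sets, each determined by the prescribed level-$1$ restrictions through the transfer. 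Finally, by~\ref{item:zerozero} and continuity from below, $\tailmeasure(A)=\tailmeasure(A\setminus\{\zeroseq\setE\})=\lim_{N,n\to\infty}\tailmeasure(A\cap F_{N,n})$, so $\tailmeasure(A)$ depends only on the prescribed restrictions.

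The main point to handle carefully is the avoidance of $\infty-\infty$ indeterminacies: the sets $\{\pseudonorm{\bx_h}[E]>0\}$ may well carry infinite mass, so I must keep the threshold strictly positive (at $1/n$, where all the sets are finite) throughout the decomposition and use the disjointified sum rather than a signed inclusion--exclusion, passing to the threshold $0$ and to the whole index set $\Zset$ only in the final monotone limit.
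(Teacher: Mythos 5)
Your proof is correct and follows essentially the same route as the paper's: cover $\seqspace\setE\setminus\{\zeroseq\setE\}$ by the countably many finite-measure sets $\{\pseudonorm{\bx_h}[E]>1/n\}$, disjointify to get $\sigma$-finiteness and determination by the restrictions to these sets, then use $\alpha$-homogeneity to pull the threshold $1/n$ back to $1$. Your write-up is somewhat more explicit than the paper's on the scaling transfer $\tailmeasure(B)=n^{\alpha}\tailmeasure(nB)$ and the final monotone limit, but the argument is the same.
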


\begin{proof}
 By property~\ref{item:zerozero} of Definition~\ref{def:tailmeasure}, the tail measure $\tailmeasure$ is supported by
 \begin{align*}
   \seqspace\setE\setminus\{\zeroseq\setE\}\,=\,\bigcup_{h\in\Zset, n\geq 1} {A}_{h,n} \; , 
 \end{align*}
 with $ A_{h,n}=\{\bx\in\seqspace\setE : \pseudonorm{\bx_h}[E]>n^{-1}\}$.  Since
 $\seqspace\setE\setminus\{\zeroseq\setE\}$ is a countable union of measurable sets, we can also
 write it as a countable union of pairwise disjoint measurable sets. For instance enumerating
 ${A}_{h,n}, h\in \Zset, n\ge 1$ as $D_i,i\ge 1$ and taking
 $\mathcal D_1=D_1, \mathcal D_i= D_i \cap (D_1 \cup \cdots \cup D_{i-1})^c, i\ge 2$,  we have that
 $\seqspace\setE\setminus\{\zeroseq\setE\}= \cup_{i\ge 1} \mathcal D_i$ with the sets
 $\mathcal D_i$, $i\geq1$, being pairwise disjoint. Since by property~\ref{item:finipourtout-h}
 and~\ref{item:homogeneity} we have that $\tailmeasure(\mathcal D_i) < \infty, i\ge 1$, then
 $\tailmeasure$ is $\sigma$-finite and completely determined by its restrictions to the sets
 $\mathcal{D}_i$, $i\geq1$, hence by its restriction to the sets $A_{h,n}$, $h\in\Zset$,
 $n\geq1$. Using further the homogeneity property~\ref{item:homogeneity}, it follows that
 $\tailmeasure$ is determined by its restriction to the sets $\{\pseudonorm{\bx_h}[E]>1\}$,
 $h\in\Zset$.
 \end{proof}


\subsection{Stochastic representation of  tail measures}

The following theorem provides a fundamental stochastic representation of  a tail measure in terms of a $\setE$-valued stochastic process $\bZ=(\bZ_h)_{h\in\Zset}$ and
characterizes shift-invariant tail measures. 

\begin{theorem}
  \label{theo:tiltshift}
  A measure $\tailmeasure$ on $(\seqspace\setE,\mcf)$ is a tail measure with index $\alpha>0$ if and
  only if there exists an $\setE$-valued stochastic process $\bZ=(\bZ_h)_{h\in\Zset}$ defined on a
  probability space $(\Omega,\mca,\pr)$ such that
  \begin{equation}
    \label{eq:cond-Z}
    \pr(\bZ=\zeroseq\setE)=0\;,\quad \esp[\pseudonorm{\bZ_0}[E]^\alpha]=1\;, \quad \esp[\pseudonorm{\bZ_h}[E]^\alpha]<\infty \mbox{  for all $h\in\Zset$}\;,
  \end{equation}
  and
  \begin{align}
    \tailmeasure(A) =  \int_0^\infty \pr(r\bZ\in A) \alpha r^{-\alpha-1} \rmd r\;,\quad A\in\mcf \; . \label{eq:polar-nu-Z}
  \end{align}
  Moreover, $\tailmeasure$ is shift-invariant if and only if, for all non negative measurable
  $\alpha$-homogeneous functions $H$ and $h\in\Zset$,
  \begin{align}
    \label{eq:alpha-shift-invariance}
    \esp[H(\shift^h\bZ)] =\esp[ H(\bZ)] \; . 
  \end{align}
\end{theorem}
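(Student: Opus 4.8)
The plan is to prove the two equivalences in turn, both resting on a polar decomposition of $\tailmeasure$. For the ``if'' part of the first equivalence I would insert \eqref{eq:polar-nu-Z} into \Cref{def:tail-measure} and verify \ref{item:zerozero}--\ref{item:homogeneity} by Tonelli: the substitution $r\mapsto ur$ in the radial integral gives homogeneity \ref{item:homogeneity}, the computation $\tailmeasure(\{\pseudonorm{\bx_h}[E]>1\})=\int_0^\infty\pr(r\pseudonorm{\bZ_h}[E]>1)\alpha r^{-\alpha-1}\rmd r=\esp[\pseudonorm{\bZ_h}[E]^\alpha]$ turns \eqref{eq:cond-Z} into \ref{item:standardization}--\ref{item:finipourtout-h}, and $\pr(\bZ=\zeroseq\setE)=0$ yields \ref{item:zerozero}.

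The converse is the substance. Since coordinate $0$ may vanish on the support of $\tailmeasure$, normalising by $\pseudonorm{\bx_0}[E]$ is impossible; instead I would introduce a global modulus $N(\bx)=(\sum_{h\in\Zset}a_h\pseudonorm{\bx_h}[E]^\alpha)^{1/\alpha}$ with weights $a_h>0$ small enough, depending on $c_h:=\tailmeasure(\{\pseudonorm{\bx_h}[E]>1\})<\infty$, that a union bound yields $\tailmeasure(\{N>1\})<\infty$. This $N$ is $1$-homogeneous and vanishes only at $\zeroseq\setE$, and homogeneity \ref{item:homogeneity} forces the scale-invariant sets $\{N=\infty\}$ and $\{N=0\}$ to be $\tailmeasure$-null, so $\tailmeasure$ is carried by $\{0<N<\infty\}$. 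There the map $\bx\mapsto(N(\bx),\bx/N(\bx))$ is a measurable isomorphism onto $(0,\infty)\times\{N=1\}$, and I would show, by checking equality on the $\pi$-system $\{N>t,\ \bx/N(\bx)\in C\}=t\{N>1,\ \bx/N(\bx)\in C\}$ and invoking \Cref{lem:determination-tailmeasure}, that $\tailmeasure$ disintegrates as $\alpha r^{-\alpha-1}\rmd r\otimes\sigma$ with $\sigma$ a finite angular measure of mass $c=\tailmeasure(\{N>1\})$. Taking $\bZ=c^{1/\alpha}\bW$ with $\bW\sim\sigma/c$ then produces \eqref{eq:polar-nu-Z} after a radial change of variables, while $\esp[\pseudonorm{\bZ_h}[E]^\alpha]=\tailmeasure(\{\pseudonorm{\bx_h}[E]>1\})$ recovers \eqref{eq:cond-Z}.

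For the shift-invariance criterion I would isolate the identity that, for \eqref{eq:polar-nu-Z} and any $1$-homogeneous gauge $M$ with $M(\bZ)>0$ almost surely,
\begin{align*}
\esp[H(\bZ)]=\int_{\seqspace\setE} H(\bx)\,g(M(\bx))\,\tailmeasure(\rmd\bx)
\end{align*}
holds for every non-negative measurable $\alpha$-homogeneous $H$ and any $g\ge0$ with $\alpha\int_0^\infty g(s)s^{-1}\rmd s=1$; this is pure Tonelli, using $H(r\bz)=r^\alpha H(\bz)$ and $M(r\bz)=rM(\bz)$, and needs no integrability. Through the polar decomposition these integrals equal $\int_{\{N=1\}}H\,\rmd\sigma$, so letting $H$ range over $\tilde H(\bx/N(\bx))N(\bx)^\alpha$ recovers $\sigma$; hence the map $H\mapsto\esp[H(\bZ)]$ determines $\tailmeasure$. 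For the forward implication I apply the identity to $H\circ\shift^h$ with gauge $N$, use the measure form $\int\Phi(\shift^{-h}\bx)\,\tailmeasure(\rmd\bx)=\int\Phi(\bx)\,\tailmeasure(\rmd\bx)$ of shift invariance to transfer the shift from $H$ onto $N$, and then the identity again with gauge $N\circ\shift^{-h}$ (admissible since $\shift^{-h}\bZ\neq\zeroseq\setE$ a.s.), obtaining $\esp[H(\shift^h\bZ)]=\esp[H(\bZ)]$, i.e.\ \eqref{eq:alpha-shift-invariance}. Conversely, under \eqref{eq:alpha-shift-invariance} the measure $\tailmeasure^{(h)}(A):=\tailmeasure(\shift^h A)$ is $\sigma$-finite and homogeneous with representation $\shift^{-h}\bZ$; testing \eqref{eq:alpha-shift-invariance} on $H=N^\alpha$ gives $\tailmeasure^{(h)}(\{N>1\})=\esp[N^\alpha(\shift^{-h}\bZ)]=\esp[N^\alpha(\bZ)]=\tailmeasure(\{N>1\})<\infty$, so the polar decomposition of $\tailmeasure^{(h)}$ in the same $N$ exists, and then $\int Hg(N)\,\rmd\tailmeasure^{(h)}=\esp[H(\shift^{-h}\bZ)]=\esp[H(\bZ)]=\int Hg(N)\,\rmd\tailmeasure$ for all such $H$ forces $\tailmeasure^{(h)}=\tailmeasure$; the choice $h=1$ is property~(v).

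The delicate point throughout is the modulus $N$: establishing measurability of the polar map and its inverse, extracting the radial density $\alpha r^{-\alpha-1}\rmd r$ from homogeneity alone, and---crucially for the converse---keeping the \emph{same} gauge admissible for $\tailmeasure^{(h)}$, which is precisely where \eqref{eq:alpha-shift-invariance} applied to $N^\alpha$ secures $\tailmeasure^{(h)}(\{N>1\})<\infty$. The remainder is bookkeeping with Tonelli and homogeneity.
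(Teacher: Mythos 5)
Your proof is correct and follows essentially the same route as the paper: both hinge on constructing a $1$-homogeneous gauge that is positive and finite $\tailmeasure$-a.e.\ (your weighted $\ell^\alpha$-sum $N$ plays the role of the paper's weighted supremum $\tau(\bx)=\sup_h q_h\pseudonorm{\bx_h}[E]$), disintegrating $\tailmeasure$ into $\alpha r^{-\alpha-1}\rmd r\otimes\sigma$, and characterizing shift invariance by transporting $\alpha$-homogeneous test functions through that disintegration. The remaining divergences are cosmetic --- a general radial density $g$ where the paper uses the window $\ind{\pseudonorm{\bx_0}[E]>1}$, and uniqueness of $\tailmeasure\circ\shift^{-h}$ argued through the angular measure for the fixed gauge $N$ rather than through \Cref{lem:determination-tailmeasure} on the sets $\{\pseudonorm{\bx_h}[E]>1\}$ --- so no further comparison is needed.
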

Note that in Equation~(\ref{eq:polar-nu-Z}), both terms may be equal to $+\infty$, for instance if $A=\{\pseudonorm{\bx}[E]>0\}$. This raises however no difficulty since the results from measure theory we use (e.g. Fubini-Tonneli theorem)  hold true for any non-negative functions and $\sigma$-finite measures, regardless the integrals are finite or not.

We call the identity~(\ref{eq:alpha-shift-invariance}) the tilt shift formula, abbreviated TSF. It
 characterizes the shift-invariance of the measure $\tailmeasure$ defined by
\eqref{eq:polar-nu-Z} which does not depend on the choice of a norm.  It looks very much like
stationarity of the process $\bZ$, but let us emphasize that \eqref{eq:alpha-shift-invariance} is restricted to $\alpha$-homogeneous test
functions so it is much weaker than stationarity. Of course, if $\bZ$ is stationary then it
satisfies \eqref{eq:alpha-shift-invariance}.  The tilt shift formula is equivalent to each of the
following equivalent conditions which will also be referred to indifferently as the TSF:
\begin{enumerate}[(i)]
\item \label{item:0-homogeneous} for all non negative measurable $0$-homogeneous functions
  $H_0:\seqspace\setE\to[0,\infty]$ and $h\in\Zset$, 
  \begin{align}
    \label{eq:tiltshift}
    \esp[\pseudonorm{\bZ_0}[E]^\alpha H_0(\shift^h\bZ)] =\esp[\pseudonorm{\bZ_h}[E]^\alpha H_0(\bZ)] \; ; 
  \end{align}
\item \label{item:non-homogeneous} for all non negative measurable functions
  $K:\seqspace\setE\to\Rset$ and $h\in\Zset$,
  \begin{align}
    \label{eq:tiltshift-general}
    \esp[\pseudonorm{\bZ_0}[E]^\alpha K(\pseudonorm{\bZ_0}[E]^{-1}\shift^h\bZ)] =\esp[\pseudonorm{\bZ_h}[E]^\alpha K(\pseudonorm{\bZ_h}[E]^{-1}\bZ)] \; .
  \end{align}
\end{enumerate}
Indeed, (\ref{eq:tiltshift-general}) obviously implies (\ref{eq:alpha-shift-invariance})
and~(\ref{eq:tiltshift}), (\ref{eq:tiltshift}) is obtained by applying
(\ref{eq:alpha-shift-invariance}) to the $\alpha$-homogeneous function
$H(\bx) = \pseudonorm{\bx_0}[E]^\alpha H_0(\bx)$ and (\ref{eq:tiltshift-general}) is obtained by
applying~(\ref{eq:alpha-shift-invariance}) to the $\alpha$-homogeneous function
$H_0(\bx)=\pseudonorm{\bx_0}[E]^\alpha K(\pseudonorm{\bx_0}[E]^{-1}\shift^h\bx)$ defined to be 0 if $\pseudonorm{\bx_0}[E]=0$.

\begin{remark}
  \label{rem:max-stable2}
  In the case $\setE=\seqspace{[0,\infty)}$, if $\tailmeasure$ has representation
  \Cref{eq:polar-nu-Z}, then the max-stable process $\bX$ with exponent measure $\tailmeasure$
  defined by \eqref{def:max-stable} can be represented as
\[
(\bX_h)_{h\in\Zset}\stackrel{d}=\Big(\bigvee_{i\geq 1} U_i \bZ^{(i)}_h\Big)_{h\in\Zset}
\]
where $\sum_{i\geq 1} \delta_{U_i}$ is a Poisson random measure on $(0,\infty)$ with intensity
$\alpha u^{-\alpha-1}\rmd{u}$ and, independently, $\bZ^{(i)}$, $i\geq 1$, are independent copies of
$\bZ$. We note in passing that TSF for Brown-Resnick max-stable processes first appears in \cite[Lemma 5.2]{DiM}, see also \cite[Theorem 6.9]{Htilt} for general max-stable processes.
\end{remark}

\begin{proof}[Proof of \Cref{theo:tiltshift}]
  It is easily checked that the measure $\tailmeasure$ defined by \eqref{eq:polar-nu-Z} is a tail
  measure. The condition $\tailmeasure(\{\zeroseq\setE\})= 0$ follows from
  $\pr(\bZ=\zeroseq\setE)=0$. A direct computation yields
\[
\tailmeasure(\{\pseudonorm{\bx_h}[E]>1\})=\esp\int_0^\infty \ind{r\pseudonorm{\bZ_h}[E]>1} \,\alpha r^{-\alpha-1}\rmd r =\esp [\pseudonorm{\bZ_h }[E]^\alpha] \; , 
\]
whence we deduce 
\[
\tailmeasure\{\pseudonorm{\bx_0}[E]>1\}=\esp[\pseudonorm{\bZ_0}[E]^\alpha]=1 \; , \ \ \tailmeasure(\{\pseudonorm{\bx_h}[E]>1\})=\esp[\pseudonorm{\bZ_h}[E]^\alpha]<\infty \; .
\]
Homogeneity of order $\alpha$ follows from the simple change of variable $r'=u^{-1}r$: for all $u>0$ and $A\in\mcf$, we have
\begin{align*}
  \tailmeasure(uA)
  & =\int_0^\infty \pr(r\bZ\in uA) \,\alpha r^{-\alpha-1}\rmd r = \int_0^\infty \pr(u^{-1}r\bZ\in uA) \,\alpha r^{-\alpha-1}\rmd r \\
  & = u^{-\alpha}\int_0^\infty \pr(r\bZ\in uA) \,\alpha r^{-\alpha-1}\rmd r =u^{-\alpha}\tailmeasure(A).
\end{align*}
Conversely, let $\tailmeasure$ be a tail measure and let us prove the existence of a representation
\eqref{eq:polar-nu-Z}.  Let us first prove that there exists at least one measurable functional
$\tau:\seqspace\setE\to[0,\infty)$ having the following properties:
\begin{enumerate}[(i)]
\item \label{item:zero}  $\tau(\bx) = 0$ if and only if $\bx=\zeroseq\setE$;
\item \label{item:homogeneous}  $\tau$ is 1-homogeneous;
\item \label{item:un}  $\tailmeasure(\{\tau(\bx)>1\})=1$. 
\end{enumerate}
Define $p_h= \tailmeasure(\{\pseudonorm{\bx_h}[E]>1\})$ for $h\in\Zset$ and let
$q\in(0,\infty)^\Zset$ be a positive sequence such that $\sum_{h\in\Zset} p_hq_h^{\alpha} <
\infty$. Consider the map $\tau:\seqspace\setE\to [0,\infty]$ defined by
\begin{align*}
  \tau(\bx) = \sup_{h\in\Zset} q_h\pseudonorm{\bx_h}[E] \; . 
\end{align*}
Then $\tau$ is 1-homogeneous and since $\pseudonorm{\bx_h}[E]=0$ if and only if $\bx_h= \zero{\setE}$ for all
$h\in\Zset$, we have $\tau(\bx)=0$ if and only if $\bx=\zeroseq\setE$. By the homogeneity of
$\tailmeasure$, we have
\begin{align*}
  \tailmeasure(\{\tau(\bx)>1\}) 
  & \leq \sum_{h\in\Zset} \tailmeasure(\{q_h\pseudonorm{\bx_h}[E]>1\}) 
    = \sum_{h\in\Zset} p_h q_h^{\alpha} < \infty \; , \\
  \tailmeasure(\{\tau(\bx)>1\}) & \geq  q_0^{\alpha} \tailmeasure(\{\pseudonorm{\bx_0}[E]>1\}) =q_0^{\alpha} >0 \;,
\end{align*}
whence $\tailmeasure(\{\tau(\bx)>1\})\in (0,\infty)$. Therefore, by multiplying the sequence $q$ by a suitable
normalizing constant, we can impose that  $\tailmeasure(\{\tau(\bx)>1\})=1$.

Let now $\tau$ be an arbitrary measurable map having the properties \ref{item:zero},
\ref{item:homogeneous} and \ref{item:un} and define the ``unit sphere'' $S_\tau=\{\tau(\bx)=1\}$ and
the polar coordinate mapping
\[
\begin{array}{cccc}
  \coormap 
  & :     \seqspace\setE\setminus\{\zeroseq\setE\} & \rightarrow & (0,\infty)\times S_\tau \; , \\ 
  &    \bx &\mapsto & (\tau(\bx), \bx/\tau(\bx)) \; . 
\end{array}
\]
Define the probability measure $\sigma$ on  $S_\tau$ by
\[
\sigma(A)=\tailmeasure(\{\tau(\bx)>1,\ \bx/\tau(\bx)\in A\})\;,\quad A\in\mcf \; ,
\]  
and the measure $\nu_\alpha$ on $(0,\infty)$ with density $\alpha x^{-\alpha-1}$ with respect to
Lebesbue measure.  Since $\coormap$ is one-to-one and $\tau$ is homogeneous, we obtain the polar
representation of $\tailmeasure$, that is $\tailmeasure\circ \coormap^{-1} = \nu_\alpha\otimes \sigma$
or explicitly, for all $A\in\mcf$,
\begin{equation}
  \label{eq:nu-sigma}
  \tailmeasure(A) = \int_0^\infty\int_{\seqspace\setE}\ind {r\bx\in A}\,\sigma(\rmd\bx) \alpha r^{-\alpha-1} \rmd r \; .
\end{equation}
Indeed, starting from the right hand side of \eqref{eq:nu-sigma}, we compute 
\begin{align*}
  \int_0^\infty\int_{\seqspace\setE} 
  & \ind {r\bx\in A}\,\sigma(\rmd\bx) \alpha r^{-\alpha-1} \rmd r \\
  & = \int_0^\infty \int_{\seqspace\setE}\ind{\tau(\bx)>1,\ r\bx/\tau(\bx)\in A}\,\tailmeasure(\rmd\bx) \alpha r^{-\alpha-1} \rmd r\\
  & = \int_0^\infty \int_{\seqspace\setE}\tau(\bx)^{-\alpha}\ind{\tau(\bx)>1,\ r\bx\in A}\,\tailmeasure(\rmd\bx) \alpha r^{-\alpha-1} \rmd r\\
  & = \int_0^\infty \int_{\seqspace\setE} \tau(\bx)^{-\alpha}\ind{r\tau(\bx)>1,\ \bx\in A}\,\tailmeasure(\rmd\bx) \alpha r^{-\alpha-1} \rmd r\\
  & = \int_{\seqspace\setE} \ind{\bx\in A}\,\tailmeasure(\rmd\bx)=\tailmeasure(A) \; .
\end{align*}
We use throughout these lines that
$\tailmeasure(\{\tau(\bx)=0\})=\tailmeasure(\{\zeroseq\setE\})=0$.  The successive equalities rely
on the definition of $\sigma$, the changes of variable $r'= r/\tau(\bx)$ and $\bx'= \bx/r$, the
homogeneity of $\tailmeasure$ and $\tau$ and finally the fact that
$\int_0^\infty \ind{r>z}\alpha r^{-\alpha-1} \rmd r =z^{-\alpha}$ with $z=1/\tau(\bx)$.

Consider now a probability space $(\Omega,\mca,\pr)$ on which we can define an
$\seqspace\setE$-valued random element~$\bZ$ with distribution $\sigma$. Then (\ref{eq:nu-sigma}) is
exaclty the stochastic representation~\eqref{eq:polar-nu-Z}. The conditions in \eqref{eq:cond-Z} are
a consequence of Definition~\ref{def:tail-measure} together with \eqref{eq:polar-nu-Z}:
$\tailmeasure(\{\zeroseq\setE\})=\pr(\bZ=\zeroseq\setE)=0$ and
$\tailmeasure(\{\pseudonorm{\bx_h}[E]>1\})=\esp[\pseudonorm{\bZ_h}[E]^\alpha]$ is finite for all $h\in\Zset$ and equal to
$1$ for $h=0$.

Finally, assume that  $\tailmeasure$ is shift-invariant and let $H_0:\seqspace{\setE}\to[0,\infty]$ be a
$0$-homogeneous measurable function. Using the stochastic representation \eqref{eq:polar-nu-Z} and
Fubini-Tonelli's theorem for all $h\in\Zset$ we obtain
\begin{align*}
  \esp[\pseudonorm{\bZ_0}[E]^\alpha H_0(\shift^h\bZ)] 
  & = \esp\Bigl[ H_0(\shift^h\bZ)\int_0^\infty\ind{r\pseudonorm{\bZ_0}[E]>1} \alpha r^{-\alpha-1} \rmd r  \Bigr] \\
  & = \int_{\seqspace\setE} H_0(\shift^h\bx) \ind{\pseudonorm{\bx_0}[E]>1} \tailmeasure(\rmd\bx) \\ 
  &  = \int_{\seqspace\setE} H_0(\bx) \ind{\pseudonorm{\bx_h}[E]>1} \tailmeasure(\rmd\bx) \\ 
  & = \esp\Bigl[ H_0(\bZ)\int_0^\infty\ind{r\pseudonorm{\bZ_h}[E]>1} \alpha r^{-\alpha-1} \rmd r  \Bigr]\\ 
 &   = \esp[ H_0(\bZ)\pseudonorm{\bZ_h}[E]^\alpha] \; . 
\end{align*}
The third equality uses the shift invariance  of $\tailmeasure$ and this proves that 
\eqref{eq:tiltshift} holds.

Conversely we prove that the tilt shift formula \eqref{eq:tiltshift} implies the shift invariance of
$\tailmeasure$. For this purpose, we note that for all $h\in\Zset$ and $A\in\mcf$,
  \begin{align}
    \tailmeasure(A\cap\{\pseudonorm{\bx_h}[E]>1\})
    & = \int_0^\infty \esp[\ind{r\bZ\in A\;,\ r\pseudonorm{\bZ_h}[E]>1 }] \alpha r^{-\alpha-1} \rmd r  \nonumber\\
    & = \int_1^\infty \esp[\pseudonorm{\bZ_h}[E]^\alpha \ind{r\bZ/\pseudonorm{\bZ_h}[E]\in A}] \alpha r^{-\alpha-1} \rmd r  \nonumber\\
    & = \int_1^\infty \esp[\pseudonorm{\bZ_0}[E]^\alpha \ind{r\shift^h \bZ/\pseudonorm{\bZ_0}[E]\in A}] \alpha r^{-\alpha-1} \rmd r \; . \label{eq:mcr} 
\end{align}
We used successively the stochastic representation \eqref{eq:polar-nu-Z}, the change of variable
$r'=r\pseudonorm{\bZ_h}[E]$ (where $\pseudonorm{\bZ_h}[E]$ is almost surely finite as a consequence of \eqref{eq:cond-Z})
and the tilt-shift formula \eqref{eq:tiltshift}. Similarly, for $k\in\Zset$,
\begin{align*}
  (\tailmeasure\circ \shift^{-k})(A\cap\{\pseudonorm{\bx_h}[E]>1\})
  & = \int_0^\infty \esp[\ind{r\shift^{k}\bZ\in A\;,\ r\pseudonorm{\bZ_{h-k}}[E]>1}] \alpha r^{-\alpha-1} \rmd r   \\
  & = \int_1^\infty \esp[\pseudonorm{\bZ_{h-k}}[E]^\alpha \ind{r\shift^{k}\bZ/\pseudonorm{\bZ_{h-k}}[E]\in A}]\alpha r^{-\alpha-1} \rmd r  \\
  & = \int_1^\infty \esp[\pseudonorm{\bZ_0}[E]^\alpha \ind{r\shift^h\bZ/\pseudonorm{\bZ_0}[E]\in A}] \alpha r^{-\alpha-1} \rmd r \; .
\end{align*}
This proves that $\tailmeasure=\tailmeasure\circ\shift^{-k}$ on the set $\{\pseudonorm{\bx_h}[E]>1\}$. Since
this holds for all $h\in\Zset$, Lemma~\ref{lem:determination-tailmeasure} implies
$\tailmeasure=\tailmeasure\circ\shift^{-k}$, whence $\tailmeasure$ is shift-invariant.
\end{proof}

\subsection{The spectral tail process and the time change formula}
\label{sec:stp-tcf}
The following notion of tail process and spectral tail process plays an important role in the theory
of regularly varying time series, see \cite{basrak:segers:2009}. We define here these objects in
terms of the tail measure only. The link between these two approaches will be made in \Cref{sec:regvar-ts-construction} and was already pointed by \cite{samorodnitsky:owada:2012}, section 4.

\begin{definition}[Local tail process] 
  \label{def:local-tail-process}
  Let $\tailmeasure$ be a tail measure on $\seqspace\setE$ and assume that $h\in\Zset$ is such that
  $p_h=\tailmeasure(\{\pseudonorm{\bx_h}[E]>1\})>0$. The local tail process of $\tailmeasure$ at lag $h$ is the
  process $\bY^{(h)}$ with distribution
  \begin{align*}
    \pr(\bY^{(h)}\in A) =  \frac{1}{p_h} \tailmeasure(\{\pseudonorm{\bx_h}[E]>1,\bx \in A\})\; ,     \quad A\in\mcf\;.
  \end{align*}
  The process $\bTheta^{(h)}=\bY^{(h)}/\pseudonorm{\bY_h^{(h)}}[E]$ is called the local spectral tail process at
  lag $h$.

  For $h=0$, we write simply $\bY=\bY^{(0)}$ and $\bTheta=\bTheta^{(0)}$, called the tail process
  and the spectral tail process associated to $\tailmeasure$.
\end{definition}

\begin{proposition}
  \label{prop:localtailprocess}
  Let $\tailmeasure$ be a tail measure with stochastic representation~\eqref{eq:polar-nu-Z}. Then
  $p_h=\tailmeasure(\{\pseudonorm{\bx_h}[E]>1\})=\esp[\pseudonorm{\bZ_h}[E]^\alpha]<\infty$. If $p_h>0$, then
  $\pseudonorm{\bY_h^{(h)}}[E]$ and $\bTheta^{(h)}=\bY^{(h)}/\pseudonorm{\bY_h^{(h)}}[E]$ are independent, $\pseudonorm{\bY_h^{(h)}}[E]$
  has an $\alpha$-Pareto distribution, that is
  \[
  \pr(\pseudonorm{\bY_h^{(h)}}[E]>u)=u^{-\alpha}\;,\quad u>1\;,
  \]
  and the distribution of $\bTheta^{(h)}$ is given by
   \begin{align}
    \pr(\bTheta^{(h)}\in A) = p_h^{-1} \esp[\pseudonorm{\bZ_h}[E]^\alpha \ind{\bZ/\pseudonorm{\bZ_h}[E]\in A}] \;,\quad A\in\mcf \; .\label{eq:lp}
  \end{align}

\end{proposition}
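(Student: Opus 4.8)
The first assertion $p_h=\esp[\pseudonorm{\bZ_h}[E]^\alpha]$ is exactly the computation already carried out in the proof of \Cref{theo:tiltshift}, so I would simply recall it (and in particular $p_h<\infty$ by \eqref{eq:cond-Z}). For the remaining claims, the plan is to identify the \emph{joint} law of the radial part $\pseudonorm{\bY_h^{(h)}}[E]$ and the angular part $\bTheta^{(h)}=\bY^{(h)}/\pseudonorm{\bY_h^{(h)}}[E]$ in one stroke by testing against products. Concretely, for nonnegative measurable $f:(0,\infty)\to[0,\infty)$ and $g:\seqspace\setE\to[0,\infty)$, I would use \Cref{def:local-tail-process} to write
\[
\esp\bigl[f(\pseudonorm{\bY_h^{(h)}}[E])\,g(\bTheta^{(h)})\bigr]
= \frac{1}{p_h}\int_{\seqspace\setE} f(\pseudonorm{\bx_h}[E])\,g(\bx/\pseudonorm{\bx_h}[E])\,\ind{\pseudonorm{\bx_h}[E]>1}\,\tailmeasure(\rmd\bx)\;.
\]

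Substituting the stochastic representation \eqref{eq:polar-nu-Z} turns the right-hand side into an integral against $\alpha r^{-\alpha-1}\rmd r\otimes\pr$, and the key simplification is that the angular argument is scale invariant: with $\bx=r\bZ$ one has $g(r\bZ/\pseudonorm{r\bZ_h}[E])=g(\bZ/\pseudonorm{\bZ_h}[E])$, so the factor $g$ no longer depends on $r$. I would then carry out the change of variable $s=r\pseudonorm{\bZ_h}[E]$ inside the $r$-integral (legitimate since the indicator $\{r\pseudonorm{\bZ_h}[E]>1\}$ forces $\pseudonorm{\bZ_h}[E]>0$), which produces a factor $\pseudonorm{\bZ_h}[E]^\alpha$ and decouples the two integrations:
\[
\esp\bigl[f(\pseudonorm{\bY_h^{(h)}}[E])\,g(\bTheta^{(h)})\bigr]
= \Bigl(\int_1^\infty f(s)\,\alpha s^{-\alpha-1}\,\rmd s\Bigr)\,\frac{1}{p_h}\,\esp\bigl[\pseudonorm{\bZ_h}[E]^\alpha\,g(\bZ/\pseudonorm{\bZ_h}[E])\bigr]\;.
\]

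This product form proves everything simultaneously. Taking $f\equiv g\equiv 1$ recovers $\esp[\pseudonorm{\bZ_h}[E]^\alpha]=p_h$ and hence total mass one, so each factor is a genuine (sub)probability; the factorization then gives the independence of $\pseudonorm{\bY_h^{(h)}}[E]$ and $\bTheta^{(h)}$. Setting $g\equiv1$ identifies the radial law as the measure with density $\alpha s^{-\alpha-1}$ on $(1,\infty)$, that is the $\alpha$-Pareto law $\pr(\pseudonorm{\bY_h^{(h)}}[E]>u)=u^{-\alpha}$ for $u>1$, while setting $f\equiv1$ yields precisely \eqref{eq:lp}. I do not anticipate a genuine obstacle: the only points requiring care are the bookkeeping around the event $\{\pseudonorm{\bZ_h}[E]=0\}$, which contributes nothing thanks to the indicator and the weight $\pseudonorm{\bZ_h}[E]^\alpha$, and the use of Fubini--Tonelli, which is immediate for nonnegative integrands and the $\sigma$-finite measure $\tailmeasure$.
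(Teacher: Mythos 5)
Your proof is correct and follows essentially the same route as the paper: substitute the stochastic representation \eqref{eq:polar-nu-Z}, exploit the $0$-homogeneity of the angular argument, and perform the change of variable $s=r\pseudonorm{\bZ_h}[E]$ to decouple the radial and angular parts. If anything, your use of product test functions $f\cdot g$ makes the independence and Pareto claims more explicit than the paper's written proof, which only records the identity for a general test function $H$ and then specializes to obtain \eqref{eq:lp}.
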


\begin{proof}
  By definition of the local tail process and using the stochastic representation
  \eqref{eq:polar-nu-Z}, we have for all measurable $H:\seqspace\setE\to [0,\infty]$
  \begin{align*}
    p_h\esp[H(\bY^{(h)})] 
    & = \int_{\seqspace\setE} H(\bx) \ind{\pseudonorm{\bx_h}[E]>1} \tailmeasure(\rmd\bx) \\
    & = \int_0^\infty \esp[H(r\bZ) \ind{r\pseudonorm{\bZ_h}[E]>1}] \alpha r^{-\alpha-1} \rmd r \\
    & = \int_0^\infty \esp[H(r\bZ) \ind{r\pseudonorm{\bZ_h}[E]>1,\ 0< \norm{\bZ_h}< \infty}] \alpha r^{-\alpha-1} \rmd r \\
    & = \int_1^\infty \esp\left[\pseudonorm{\bZ_h}[E]^\alpha H\left(r\bZ/\pseudonorm{\bZ_h}[E]\right)  \right] \alpha r^{-\alpha-1} \rmd r \; .
  \end{align*}
  The last equality relies on the change of variable $r'=r\pseudonorm{\bZ_h}[E]$.  Applying this identity with
  the $0$-homogeneous function $H_0(\bx)= \ind{\bx/\pseudonorm{\bx_h}[E]\in A,\ \pseudonorm{\bx_h}[E]>0}$ yields
  \begin{align*}
    \pr ( \bTheta^{(h)}\in A ) 
    & =  \esp[H_0(\bY^{(h)})] =  \int_1^\infty \esp\left[\pseudonorm{\bZ_h}[E]^\alpha H_0(\bZ/\pseudonorm{\bZ_h}[E]) \right] \alpha r^{-\alpha-1} \rmd r \\
    & = p_h^{-1} \esp\left[\pseudonorm{\bZ_h}[E]^\alpha \ind{\bZ/\pseudonorm{\bZ_h}[E]\in A } \right] \; ,
\end{align*}
proving Equation~\eqref{eq:lp}.
\end{proof}

\begin{corollary}
  \label{cor:TCF}
  A tail measure $\tailmeasure$ is shift-invariant if and only if $p_h=1$ and
  $\bTheta^{(h)}\stackrel{d}=\shift^{h} \bTheta$ for all $h\in\Zset$.  Then the spectral tail
  process $\bTheta$ characterizes the tail measure $\tailmeasure$ and satisfies
  \begin{align}
   \esp[H_0(\shift^h\bTheta)] = \esp[\pseudonorm{\bTheta_h}[E]^\alpha H_0(\bTheta)] \; , \ \  h\in\Zset \; ,
    \label{eq:time-change-formula-0}    
  \end{align}
  for all $0$-homogeneous  measurable $H_0:\seqspace\setE\to [0,\infty)$ vanishing on $\{\pseudonorm{\bx_0}[E]=0\}$.
\end{corollary}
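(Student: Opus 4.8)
The plan is to read everything off from the explicit law of the local spectral tail process obtained in \Cref{prop:localtailprocess}, combined with the equivalent form \eqref{eq:tiltshift-general} of the tilt shift formula. Throughout I will use that, since $p_0=1$, the law of $\bTheta=\bTheta^{(0)}$ is given by $\esp[g(\bTheta)]=\esp[\pseudonorm{\bZ_0}[E]^\alpha g(\bZ/\pseudonorm{\bZ_0}[E])]$ for every nonnegative measurable $g$, with the convention of \eqref{eq:lp} on the event $\{\pseudonorm{\bZ_0}[E]=0\}$, and likewise for $\bTheta^{(h)}$ whenever $p_h>0$.

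First I would settle the equivalence. For the direct implication, assume $\tailmeasure$ is shift-invariant. Since $\{\pseudonorm{\bx_h}[E]>1\}=\shift^h\{\pseudonorm{\bx_0}[E]>1\}$ and $\shift$ is an invertible measure-preserving map, this gives $p_h=p_0=1$ for every $h$. Using $p_0=p_h=1$ and that $\shift^h$ commutes with multiplication by scalars, \eqref{eq:lp} yields, for every nonnegative measurable $g$,
\begin{align*}
\esp[g(\shift^h\bTheta)] &= \esp[\pseudonorm{\bZ_0}[E]^\alpha g((\shift^h\bZ)/\pseudonorm{\bZ_0}[E])]\;,\\
\esp[g(\bTheta^{(h)})] &= \esp[\pseudonorm{\bZ_h}[E]^\alpha g(\bZ/\pseudonorm{\bZ_h}[E])]\;,
\end{align*}
and the two right-hand sides coincide by the tilt shift formula \eqref{eq:tiltshift-general}, whence $\bTheta^{(h)}\equallaw\shift^h\bTheta$. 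Conversely, if $p_h=1$ and $\bTheta^{(h)}\equallaw\shift^h\bTheta$ for all $h$, then equating the same two expressions shows that \eqref{eq:tiltshift-general} holds for all nonnegative measurable $g$, and \Cref{theo:tiltshift} then yields the shift invariance of $\tailmeasure$.

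Next I would show that $\bTheta$ determines $\tailmeasure$. By \Cref{lem:determination-tailmeasure} it suffices to recover the restriction of $\tailmeasure$ to each $\{\pseudonorm{\bx_h}[E]>1\}$, which by \Cref{def:local-tail-process} equals $p_h$ times the law of the local tail process $\bY^{(h)}$. By \Cref{prop:localtailprocess}, $\bY^{(h)}$ has the law of the product of an $\alpha$-Pareto variable and an independent copy of $\bTheta^{(h)}$, so it is determined by the law of $\bTheta^{(h)}$; in the shift-invariant case $p_h=1$ and $\bTheta^{(h)}\equallaw\shift^h\bTheta$, so all these restrictions, and hence $\tailmeasure$, are determined by the single process $\bTheta$.

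Finally, for the time change formula \eqref{eq:time-change-formula-0} I would evaluate both sides against the law of $\bZ$. On the left, using $\bTheta^{(h)}\equallaw\shift^h\bTheta$, formula \eqref{eq:lp}, and the $0$-homogeneity of $H_0$ (so that $H_0(\bZ/\pseudonorm{\bZ_h}[E])=H_0(\bZ)$ where $\pseudonorm{\bZ_h}[E]>0$), one finds $\esp[H_0(\shift^h\bTheta)]=\esp[\pseudonorm{\bZ_h}[E]^\alpha H_0(\bZ)]$. On the right, applying the law of $\bTheta$ to the function $\bx\mapsto\pseudonorm{\bx_h}[E]^\alpha H_0(\bx)$ and using that the pseudonorm is $1$-homogeneous while $H_0$ is $0$-homogeneous, the factors $\pseudonorm{\bZ_0}[E]^\alpha$ cancel and one again obtains $\esp[\pseudonorm{\bZ_h}[E]^\alpha H_0(\bZ)]$, so both sides agree. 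I expect the only real obstacle to be the consistent treatment of the degenerate set $\{\pseudonorm{\bZ_0}[E]=0\}\cup\{\pseudonorm{\bZ_h}[E]=0\}$, on which the normalizations are undefined; there $H_0$ vanishes by hypothesis and the Pareto weights vanish as well, so with the convention of \eqref{eq:lp} every contribution from this set drops out and all interchanges of expectation and integral are legitimate by Fubini--Tonelli for $\sigma$-finite measures.
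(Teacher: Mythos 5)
Your proposal is correct and follows essentially the same route as the paper: the equivalence is obtained by matching the law of $\bTheta^{(h)}$ from \eqref{eq:lp} against the TSF (you use the form \eqref{eq:tiltshift-general} where the paper uses \eqref{eq:tiltshift}, but these are stated as equivalent), the characterization of $\tailmeasure$ rests on \Cref{lem:determination-tailmeasure} together with the Pareto--spectral decomposition of $\bY^{(h)}$, and the TCF is verified by evaluating both sides against the law of $\bZ$ with the hypothesis that $H_0$ vanishes on $\{\pseudonorm{\bx_0}[E]=0\}$ handling the degenerate set exactly as in the paper. The only cosmetic difference is that you deduce $p_h=1$ directly from the shift invariance of the sets $\{\pseudonorm{\bx_h}[E]>1\}$ rather than from the TSF applied with $H_0\equiv1$.
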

We call Equation~\eqref{eq:time-change-formula-0} the time change formula, abbreviated TCF. It first
appeared in \cite{basrak:segers:2009} in the context of stationary regularly varying time
series. While the original proof was based on limiting arguments, we propose here a direct proof
based on shift invariance of the tail measure, which was already noticed in  \cite{samorodnitsky:owada:2012}.  In view of Proposition~\ref{prop:localtailprocess},
the TCF is in fact a direct consequence of the TSF (see the proof below).  The condition that $H_0$
vanishes on $\{\pseudonorm{\bx_0}[E]=0\}$ is important. To stress this, the TCF can be formulated in the
equivalent form: for all $0$-homogeneous measurable $H_0:\seqspace\setE\to [0,\infty)$,
  \begin{align}
    \esp[H_0(\shift^h\bTheta) \ind{\pseudonorm{\bTheta_{-h}}[E]>0}] = \esp[\pseudonorm{\bTheta_h}[E]^\alpha H_0(\bTheta)] \; ,\ \  h\in\Zset \; .
    \label{eq:time-change-formula-1}    
  \end{align}
To see this, simply apply \eqref{eq:time-change-formula-0} to the function $\bx\mapsto H_0(\bx)\ind{\pseudonorm{\bx_0}[E]>0}$.

\begin{proof}[Proof of Corollary~\ref{cor:TCF}]
  If $\tailmeasure$ is shift-invariant, then the tilt shift formula~(\ref{eq:tiltshift}) together
  with $\esp[\pseudonorm{\bZ_0}[E]^\alpha]=1$ implies $p_h=1$ for all
  $h\in\Zset$. Equations~\eqref{eq:tiltshift} and~\eqref{eq:lp} together imply, for all
  $h\in \Zset$, $A\in\mcf$,
  \begin{align*}
    \pr(\bTheta^{(h)}\in A) 
    & = \esp[\pseudonorm{\bZ_h}[E]^\alpha \ind{\bZ/\pseudonorm{\bZ_h}[E]\in A}] \\
    & = \esp[\pseudonorm{\bZ_0}[E]^\alpha \ind{B^h\bZ/\pseudonorm{\bZ_0}[E]\in A}] =     \pr(B^h \bTheta\in A) \; , 
  \end{align*}
  whence $\bTheta^{(h)}\stackrel{d}=B^h \bTheta$. Conversely, if $p_h=1$ and
  $\bTheta^{(h)}\stackrel{d}=B^h \bTheta$ for all $h\in \Zset$, then we have for all 0-homogeneous
  function $H_0$,
  \begin{align*}
    \esp\left[\pseudonorm{\bZ_0}[E]^\alpha  H_0(\shift^h\bZ) \right] =   \esp[H_0(\shift^{h}\bTheta)]  
    =  \esp[H_0(\bTheta^{(h)})]=  \esp\left[\pseudonorm{\bZ_h}[E]^\alpha H_0(\bZ)  \right] \; , 
  \end{align*}
  hence the TSF is satisfied and $\tailmeasure$ is shift-invariant by \Cref{theo:tiltshift}.

  If $\tailmeasure$ is shift-invariant, then Equation~\eqref{eq:mcr} can be rewritten as
  \[
  \tailmeasure(A\cap\{\pseudonorm{\bx_h}[E]>1\})= \int_1^\infty \esp[ \ind{r\shift^h \bTheta \in A}] \alpha
  r^{-\alpha-1} \rmd r \; , 
  \]
  for all $h\in\Zset$ and $A\in\mcf$. In view of Lemma~\ref{lem:determination-tailmeasure}, we
  deduce that $\bTheta$ characterizes the shift-invariant tail measure $\tailmeasure$.  Furthermore,
  we have for all $0$-homogeneous function $H_0$
\begin{align*}
  \esp[\pseudonorm{\bTheta_h}[E]^\alpha H_0(\bTheta) ] 
  & = \esp \left[ \pseudonorm{\bZ_0}[E]^\alpha \frac{\pseudonorm{\bZ_h}[E]^\alpha}{\pseudonorm{\bZ_0}[E]^\alpha}  H_0(\bZ/\pseudonorm{\bZ_0}[E]) \right] 
    = \esp \left[  \pseudonorm{\bZ_h}[E]^\alpha  H_0(\bZ) \ind{\pseudonorm{\bZ_0}[E]> 0}\right]  \\
  &  =\esp \left[  \pseudonorm{\bZ_h}[E]^\alpha  H_0(\bZ) \right]=\esp[H_0(\shift^h\bTheta)] \; , 
\end{align*}
where the second line of equalities is valid provided that $H_0$ vanishes on $\{ \pseudonorm{\bx_0}[E]=0\}$.
This shows that if $\tailmeasure$ is shift-invariant, the spectral tail process satisfies the
TCF~\eqref{eq:time-change-formula-0}.
\end{proof}

We have introduced the spectral tail process $\bTheta$ associated to a tail measure $\tailmeasure$.
In the shift-invariant case, it satisfies $\pr(\pseudonorm{\bTheta_0}[E]=1)=1$ and the TCF
\eqref{eq:time-change-formula-0} and it also characterizes~$\tailmeasure$. A natural question then
arises: if $\bTheta$ satisfies $\pr(\pseudonorm{\bTheta_0}[E]=1)=1$ and the TCF, can it be obtained as the
spectral tail process of some shift-invariant tail measure $\tailmeasure$? In the multivariate
setting $\setE=\mathbb{R}^d$, this question was addressed recently by \cite{janssen:2017} in
connection with the theory of max-stable processes. The next theorem still provides a positive
answer in the more general framework. Our proofs are different and work directly on the level of the
tail measure (not on the level of a stationary regularly varying time series, see
\Cref{theo:nu-to-X} below).
 
\begin{theorem}
  \label{theo:equivalent-tailmeasure-spectraltailprocess}
  The mapping which to a tail measure associates its spectral tail process is a one-to-one
  correspondence between the class of shift-invariant tail measures and the class of processes
  $\bTheta$ satisfying $\pr(\pseudonorm{\bTheta_0}[E]=1)=1$ and the TCF~(\ref{eq:time-change-formula-0}).
\end{theorem}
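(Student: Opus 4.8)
The plan is to split the statement into two implications, with essentially all the work in surjectivity. Well\mbox{-}definedness and injectivity are already contained in \Cref{cor:TCF}: to a shift\mbox{-}invariant tail measure $\tailmeasure$ the corollary associates a spectral tail process $\bTheta$ which, by the very definition $\bTheta=\bY/\pseudonorm{\bY_0}[E]$, satisfies $\pr(\pseudonorm{\bTheta_0}[E]=1)=1$ and obeys the TCF~\eqref{eq:time-change-formula-0}, and it asserts that $\bTheta$ characterizes $\tailmeasure$; this last point is exactly injectivity. So it remains to prove surjectivity: given any $\bTheta$ with $\pr(\pseudonorm{\bTheta_0}[E]=1)=1$ satisfying the TCF, I must exhibit a shift\mbox{-}invariant tail measure whose spectral tail process is $\bTheta$. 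Note in passing that the naive guess $\bZ=\bTheta$ in~\eqref{eq:polar-nu-Z} does \emph{not} work: it produces a tail measure with spectral tail process $\bTheta$ but \emph{not} shift\mbox{-}invariant as soon as $\pr(\bTheta_h=\zero\setE)>0$ for some $h$, so a genuine construction is needed.

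For the construction I take as a blueprint the expression for $\tailmeasure$ restricted to $\{\pseudonorm{\bx_h}[E]>1\}$ found in the proof of \Cref{cor:TCF}, and turn it into a definition: for each $h\in\Zset$ set $\mu_h(A)=\int_1^\infty\pr(r\shift^h\bTheta\in A)\,\alpha r^{-\alpha-1}\rmd r$ for $A\in\mcf$. Since $(\shift^h\bTheta)_h=\bTheta_0$ has pseudonorm $1$, each $\mu_h$ is a probability measure carried by $\{\pseudonorm{\bx_h}[E]>1\}$, and $\mu_h=\mu_0\circ\shift^{-h}$. The goal is to glue the family $\{\mu_h\}$ into one $\sigma$\mbox{-}finite measure $\tailmeasure$ on $\seqspace\setE\setminus\{\zeroseq\setE\}$ whose restriction to $\{\pseudonorm{\bx_h}[E]>1\}$ is $\mu_h$. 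Observing that the sets $\shift^h u\{\pseudonorm{\bx_0}[E]>1\}=\{\pseudonorm{\bx_h}[E]>u\}$, $h\in\Zset$, $u>0$, cover $\seqspace\setE\setminus\{\zeroseq\setE\}$, I declare $\tailmeasure(\shift^h uC)=u^{-\alpha}\mu_0(C)$ for $C\subseteq\{\pseudonorm{\bx_0}[E]>1\}$ and extend using the disjointification device of \Cref{lem:determination-tailmeasure}.

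The crux, which I expect to be the main obstacle, is the consistency of this prescription: whenever $C$ and $\shift^h uC$ both lie in $\{\pseudonorm{\bx_0}[E]>1\}$ one must have $\mu_0(\shift^h uC)=u^{-\alpha}\mu_0(C)$. Unwinding the definitions and using $\pseudonorm{\bTheta_0}[E]=1$ to discard the radial cut\mbox{-}offs (both integrands vanish below the relevant threshold because $C$ is constrained in the coordinates $0$ and $-h$), this reduces to $\esp[G_C(\shift^{-h}\bTheta)]=\esp[G_C(\bTheta)]$ for the $\alpha$\mbox{-}homogeneous functional $G_C(\bx)=\int_0^\infty\ind{r\bx\in C}\,\alpha r^{-\alpha-1}\rmd r$. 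Writing $G_C(\bx)=\pseudonorm{\bx_0}[E]^\alpha H_{0,C}(\bx)$ with $H_{0,C}$ a $0$\mbox{-}homogeneous function (legitimate since $C\subseteq\{\pseudonorm{\bx_0}[E]>1\}$ forces $G_C$ to vanish on $\{\pseudonorm{\bx_0}[E]=0\}$) and applying the TCF in the form~\eqref{eq:time-change-formula-1} to the $0$\mbox{-}homogeneous map $\bx\mapsto H_{0,C}(\shift^{-h}\bx)$, the identity follows. The delicate point is that $H_{0,C}\circ\shift^{-h}$ does \emph{not} vanish on $\{\pseudonorm{\bx_0}[E]=0\}$, so the plain TCF~\eqref{eq:time-change-formula-0} does not apply directly; what rescues the argument is that the overlap constraint $C\subseteq\{\pseudonorm{\bx_{-h}}[E]>1/u\}$ forces $H_{0,C}(\bTheta)$ to vanish on $\{\pseudonorm{\bTheta_{-h}}[E]=0\}$, which is precisely the correction term carried by the indicator $\ind{\pseudonorm{\bTheta_{-h}}[E]>0}$ in~\eqref{eq:time-change-formula-1}.

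Once $\tailmeasure$ is built, the remaining verifications are routine. The defining properties (i)--(iv) of a tail measure hold by construction, with $p_h=\tailmeasure(\{\pseudonorm{\bx_h}[E]>1\})=\mu_h(\seqspace\setE)=1$ for every $h$. The tail process at lag $h$ has law $\mu_h$, which is the law of $Y_0\,\shift^h\bTheta$ for a standard $\alpha$\mbox{-}Pareto variable $Y_0$ independent of $\bTheta$; since $(\shift^h\bTheta)_h=\bTheta_0$ has pseudonorm $1$, normalizing as in \Cref{prop:localtailprocess} gives $\bTheta^{(h)}\equallaw\shift^h\bTheta$, and for $h=0$ this identifies the spectral tail process of $\tailmeasure$ as $\bTheta$. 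Finally, because $p_h=1$ and $\bTheta^{(h)}\equallaw\shift^h\bTheta$ for all $h$, the converse part of \Cref{cor:TCF} shows that $\tailmeasure$ is shift\mbox{-}invariant. This produces a shift\mbox{-}invariant tail measure with spectral tail process $\bTheta$, establishing surjectivity and hence the claimed one\mbox{-}to\mbox{-}one correspondence.
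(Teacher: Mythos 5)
Your proof is correct, and it takes a genuinely different route from the paper's. The paper proves surjectivity top-down: it fixes a positive summable weight sequence $q$, forms the globally defined measure $\tailmeasure_q$ of \eqref{eq:def-nuq} as a $q$-mixture of the shifted, renormalized processes $\shift^k\bTheta/\norm{\shift^k\bTheta}_{q,\alpha}$, uses the TCF to show that its restriction to each $\{\pseudonorm{\bx_h}[E]>1\}$ is the law of an independent Pareto times $\shift^h\bTheta$ (hence independent of $q$), and then obtains shift-invariance from $\tailmeasure_q\circ\shift^{-h}=\tailmeasure_{\shift^h q}$ combined with \Cref{lem:determination-tailmeasure}. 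You work bottom-up: you prescribe the local restrictions $\mu_h$ directly and glue them, so the entire burden falls on the consistency identity $\mu_0(\shift^h uC)=u^{-\alpha}\mu_0(C)$ on overlaps. Your reduction of that identity is sound: extending the radial integrals to $(0,\infty)$ is legitimate because $\pseudonorm{\bTheta_0}[E]=1$ together with the two overlap constraints makes the cut-offs automatic, and your observation that $C\subseteq\{\pseudonorm{\bx_{-h}}[E]>1/u\}$ forces $H_{0,C}$ to vanish on $\{\pseudonorm{\bx_{-h}}[E]=0\}$ — exactly absorbing the indicator $\ind{\pseudonorm{\bTheta_{-h}}[E]>0}$ in \eqref{eq:time-change-formula-1} — is the genuinely delicate point and is handled correctly. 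What each approach buys: the paper's construction yields an explicit closed formula for $\tailmeasure$ that is reused later (e.g. in \Cref{lem:support-cone} and the dissipative discussion) and sidesteps any patching argument; yours is more conceptual, exhibiting the TCF as precisely the cocycle condition under which the family of local tail processes $\{\mu_h\}$ patches into a single $\alpha$-homogeneous measure, at the price of a standard but not entirely trivial measure-theoretic gluing step (well-definedness of the disjointified sum, independence of the enumeration) that deserves to be written out. Your use of \Cref{cor:TCF} for injectivity and for deducing shift-invariance from $p_h=1$ and $\bTheta^{(h)}\equallaw\shift^h\bTheta$ coincides with the paper.
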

\begin{proof}
  Starting from a process $\bTheta$ satisfying $\pr(\pseudonorm{\bTheta_0}[E]=1)=1$ and the
  TCF~(\ref{eq:time-change-formula-0}), we need to construct a shift-invariant tail measure
  $\tailmeasure$ with spectral tail process $\bTheta$. For $q\in[0,\infty)^\Zset$ and
  $\bx\in\seqspace\setE$, we define
  \begin{align*}
    \norm{\bx}_{q,\alpha} = \left(\sum_{j\in\Zset} q_j\pseudonorm{\bx_j}[E]^\alpha\right)^{1/\alpha} \; .
  \end{align*}
  We can always choose the sequence $q$ such that
  \begin{align}
    \pr(0<\norm{\shift^k\bTheta}_{q,\alpha}<\infty) = 1 \quad \mbox{for all $k\in\Zset$}\; .
  \end{align}
  It suffices to choose $q$ such that $q_k>0$ for all $k\in\Zset$ and $\sum_{k\in\Zset} q_k=1$. Then
  $\pr(\norm{\shift^k\bTheta}_{q,\alpha}>0) = 1$ since $q_k>0$ and $\pseudonorm{\bTheta_0}[E]=1$ almost
  surely. Moreover, applying the time change formula (\ref{eq:time-change-formula-1}) with
  $H\equiv1$ yields $\esp[\pseudonorm{\bTheta_k}[E]^\alpha ]\leq1$ for all $k\in\Zset$, so that
  \begin{align*}
    \esp[\norm{\shift^k\bTheta}_{q,\alpha}^\alpha] = \sum_{j\in\Zset}q_k \esp[\pseudonorm{\shift^k\bTheta_j}[E]^\alpha]   \leq 
    \sum_{k\in\Zset} q_k =1 \; . 
  \end{align*}
  Define $\tbZ^{(k)} = \shift^k\bTheta/\norm{\shift^k\bTheta}_{q,\alpha}$, $k\in\Zset$ and the positive measure
  $\tailmeasure_q$ on $\seqspace\setE$ by
  \begin{align}
    \label{eq:def-nuq}
    \tailmeasure_q(A) & = \sum_{k\in\Zset} q_k \int_0^\infty \mathbb{P}(r\tbZ^{(k)}\in A) \alpha r^{-\alpha-1} \rmd r   
  \end{align}
for all $A\in\mathcal{F}$.   Then $\tailmeasure_q$ is obviously   $\alpha$-homogeneous and $\tailmeasure_q(\{\zeroseq\setE\})=0$ and we have furthermore, for all measurable function $H:\seqspace\setE\to[0,\infty)$, 
\begin{align*}
  \int_{\seqspace\setE} H(\bx) 
  & \ind{\pseudonorm{\bx_0}[E]>1} \tailmeasure_q(\rmd\bx)  \\
  & = \sum_{k\in\Zset} q_k \int_0^\infty \esp\Bigl[H(r\tbZ^{(k)})\ind{ r\pseudonorm{\tbZ^{(k)}_{0}}[E]>1 } \Bigr] \alpha r^{-\alpha-1} \rmd r  \\
  & = \sum_{k\in\Zset} q_k \int_0^\infty \esp\left[
    H\left(\frac{r\shift^k\bTheta}{\norm{\shift^k\bTheta}_{q,\alpha}}\right)
    \ind{r\pseudonorm{\bTheta_{-k}}[E] >   \norm{\shift^k\bTheta}_{q,\alpha} } \right] \alpha r^{-\alpha-1} \rmd r  \\
  & = \sum_{k\in\Zset} q_k \int_1^\infty \esp \left[ H\left(\frac{ r\shift^k\bTheta}{\pseudonorm{\bTheta_{-k}}[E]}\right)
    \frac{\pseudonorm{\bTheta_{-k}}[E]^\alpha}{\norm{\shift^k\bTheta}_{q,\alpha}^\alpha} \right]  \alpha r^{-\alpha-1} \rmd r \\
  & = \sum_{k\in\Zset} q_k \int_1^\infty \esp \left[\pseudonorm{\bTheta_{-k}}[E]^\alpha H\left(\frac{r\shift^k\bTheta}{\norm{\bTheta_{-k}}}\right)
    \frac{\norm{\bTheta_0}^\alpha}{\norm{\shift^k\bTheta}_{q,\alpha}^\alpha} \right] \alpha r^{-\alpha-1} \rmd r \;.  
\end{align*}
In these lines, we used successively the definition \eqref{eq:def-nuq}, the definition of $\tbZ$,
the change of variable $r'=\norm{\shift^k\bTheta}_{q,\alpha}\pseudonorm{\bTheta_{-k}}[E]^{-1}r$
(note that the event $\{\pseudonorm{\bTheta_{-k}}[E]=0\}$ has no contribution to the expectations)
and finally the fact that $\pr(\norm{\bTheta_0}=1)=1$.  The time change formula now entails
\begin{align}
  \int_{\seqspace\setE} H(\bx) \ind{\pseudonorm{\bx_0}[E]>1} \tailmeasure_q(\rmd\bx) 
  & = \sum_{k\in\Zset} q_k \int_1^\infty \esp \left[ H\left(\frac{r\bTheta}{\pseudonorm{\bTheta_0}[E]}\right) 
    \frac{\pseudonorm{\bTheta_{k}}[E]^\alpha}{\norm{\bTheta}_{q,\alpha}^\alpha}    \right] \alpha r^{-\alpha-1} \rmd r \nonumber \\
  & = \int_1^\infty \esp[H(r\bTheta)]  \alpha r^{-\alpha-1} \rmd r \;. \label{eq:proof-identity}
\end{align}
Applying this identity to the 0-homogeneous function $\bx\to H(\pseudonorm{\bx_0}[E]^{-1}\bx)\ind{\pseudonorm{\bx_0}[E]>0}$
proves that $\tailmeasure_q$ has spectral tail process $\bTheta$. It is easily obtained along the
same lines, that for all~$h\in\Zset$, 
\begin{align*}
  \int_{\seqspace\setE} H(\bx) \ind{\pseudonorm{\bx_h}[E]>1} \tailmeasure_q(\rmd\bx) 
  =  \int_1^\infty \esp[H(r\shift^h\bTheta)]  \alpha r^{-\alpha-1} \rmd r \; . 
\end{align*}
The right hand side does not depend on $q$ and taking $H\equiv 1$ yields
$\tailmeasure_q(\{\pseudonorm{\bx_h}[E]>1\})=1$, $h\in \Zset$. Therefore the $\tailmeasure_q$'s are tail
measures that coincide on the sets $\{\pseudonorm{\bx_h}[E]>1\}$, $h\in \Zset$. By
\Cref{lem:determination-tailmeasure} they are all equal and hence $\tailmeasure_q$ does not depend
$q$. This entails that~$\tailmeasure_q$ is shift-invariant since it is readily checked that
$\tailmeasure_q\circ \shift^{-h}=\tailmeasure_{\shift^h q}$ whence
$\tailmeasure_q\circ \shift^{-h}=\tailmeasure_{q}$.
\end{proof}

\begin{remark}
  \label{rem:dissipative-1}
    In two particular cases, a simpler construction of the tail measure corresponding to a given
  spectral tail process is available. 
    \begin{enumerate}[$\bullet$,wide=0pt]
  \item If $ \pr(\pseudonorm{\bTheta_h}[E]>0)=1$ for all $h\in\Zset$, then the sequence $q$ can be chosen
    as $q=\delta_0$ and we obtain
    \begin{align*}
      \tailmeasure(A) = \int_0^\infty \pr(r \bTheta\in A)\alpha r^{-\alpha-1} \rmd r \; .
    \end{align*}
    This provides a stochastic representation \eqref{eq:polar-nu-Z} of $\tailmeasure$ with
    $\bZ=\bTheta$ such that $\pseudonorm{\bZ_0}[E]=\pseudonorm{\bTheta_0}[E]=1$ almost surely.

  \item If $\pr(\sum_{h\in\Zset} \pseudonorm{\bTheta_h}[E]^\alpha < \infty)=1$, then we can choose $q\equiv 1$
    which yields
    \begin{align}
      \label{eq:dissipative-1}
      \tailmeasure(A) = \sum_{h\in\Zset} \int_0^\infty \pr(r\shift^h \tbZ \in A)\alpha r^{-\alpha-1} \rmd r \; ,    
    \end{align}
    with $\tbZ=\bTheta/(\sum_{k\in\Zset}\pseudonorm{\bTheta_k}[E]^\alpha)^{1/\alpha}$.  This representation is
    related to the mixed moving maximum representation of max-stable process see e.g. \cite{DK2016}
    and \Cref{sec:dissipative}.
\end{enumerate}
\end{remark}

We will later need the following lemma on the support of a tail measure. We say that a set
$C\in\mcf$ is a cone if $\bx\in C$ implies $u\bx\in C$ for all $u>0$.
\begin{lemma}
  \label{lem:support-cone}
  Let $\tailmeasure$ be a tail measure which admits the stochastic
  representation~(\ref{eq:polar-nu-Z}).  Let $C$ be a cone. Then
  $\tailmeasure(C)=0\Leftrightarrow\pr(\bZ\in C)=0$. If $\tailmeasure$ and  $C$
  are  shift-invariant, then   $\tailmeasure(C)=0\Leftrightarrow\pr(\bTheta\in C)=0$. 
\end{lemma}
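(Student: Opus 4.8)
The plan is to use the cone property of $C$ to integrate out the radial variable, so that both equivalences reduce to statements about the law of $\bZ$ (and of $\bTheta$). Throughout I would work with $H_0=\ind{\cdot\in C}$, which is $0$-homogeneous precisely because $C$ is a cone.

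First I would establish the unconditional equivalence. Since $C$ is a cone, $r\bx\in C$ is equivalent to $\bx\in C$ for every $r>0$, so $\{r\bZ\in C\}=\{\bZ\in C\}$ as events. Inserting $A=C$ into the representation \eqref{eq:polar-nu-Z} gives
\[
\tailmeasure(C)=\int_0^\infty\pr(r\bZ\in C)\,\alpha r^{-\alpha-1}\rmd r=\pr(\bZ\in C)\int_0^\infty\alpha r^{-\alpha-1}\rmd r\;.
\]
Because $\int_0^\infty\alpha r^{-\alpha-1}\rmd r=+\infty$, the right-hand side is $0$ if $\pr(\bZ\in C)=0$ and $+\infty$ otherwise, which is exactly $\tailmeasure(C)=0\Leftrightarrow\pr(\bZ\in C)=0$. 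No further computation is needed here.

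For the shift-invariant statement I would first rewrite $\pr(\bTheta\in C)$ in terms of $\bZ$. As $p_0=1$, Equation \eqref{eq:lp} at $h=0$ yields $\pr(\bTheta\in C)=\esp[\pseudonorm{\bZ_0}[E]^\alpha\ind{\bZ/\pseudonorm{\bZ_0}[E]\in C}]$; the cone property, together with the fact that the prefactor $\pseudonorm{\bZ_0}[E]^\alpha$ annihilates the event $\{\pseudonorm{\bZ_0}[E]=0\}$, collapses this to $\pr(\bTheta\in C)=\esp[\pseudonorm{\bZ_0}[E]^\alpha\ind{\bZ\in C}]$. The implication $\pr(\bZ\in C)=0\Rightarrow\pr(\bTheta\in C)=0$ is then immediate, and combined with the first part it gives $\tailmeasure(C)=0\Rightarrow\pr(\bTheta\in C)=0$.

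The reverse implication $\pr(\bTheta\in C)=0\Rightarrow\tailmeasure(C)=0$ is the step I expect to be the main obstacle, since $\esp[\pseudonorm{\bZ_0}[E]^\alpha\ind{\bZ\in C}]=0$ only forces $\pr(\bZ\in C,\,\pseudonorm{\bZ_0}[E]>0)=0$ and a priori says nothing about the event $\{\bZ\in C,\,\bZ_0=\zero\setE\}$. This is where I would invoke both hypotheses. Applying the tilt shift formula \eqref{eq:tiltshift} to the $0$-homogeneous $H_0=\ind{\cdot\in C}$ gives $\esp[\pseudonorm{\bZ_0}[E]^\alpha\ind{\shift^h\bZ\in C}]=\esp[\pseudonorm{\bZ_h}[E]^\alpha\ind{\bZ\in C}]$ for every $h\in\Zset$; shift invariance of $C$ replaces $\ind{\shift^h\bZ\in C}$ by $\ind{\bZ\in C}$, so the left-hand side is $\pr(\bTheta\in C)$ and we obtain $\pr(\bTheta\in C)=\esp[\pseudonorm{\bZ_h}[E]^\alpha\ind{\bZ\in C}]$ for all $h$. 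If $\pr(\bTheta\in C)=0$, then $\pr(\bZ\in C,\,\pseudonorm{\bZ_h}[E]>0)=0$ for every $h$; since $\pr(\bZ=\zeroseq\setE)=0$ forces $\{\bZ\in C\}\subset\bigcup_{h\in\Zset}\{\pseudonorm{\bZ_h}[E]>0\}$ up to a null set, a countable union bound yields $\pr(\bZ\in C)=0$, and hence $\tailmeasure(C)=0$ by the first part. This closes the chain of equivalences.
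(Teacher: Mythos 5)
Your proof is correct. For the first equivalence you do exactly what the paper does: the cone property makes $r\mapsto\pr(r\bZ\in C)$ constant, so \eqref{eq:polar-nu-Z} gives $\tailmeasure(C)=\pr(\bZ\in C)\times\infty$ and the equivalence is immediate. For the second equivalence your route differs from the paper's. The paper invokes the representation \eqref{eq:def-nuq} of a shift-invariant tail measure built from its spectral tail process (i.e.\ the construction underlying \Cref{theo:equivalent-tailmeasure-spectraltailprocess}): since $C$ is a shift-invariant cone, each term $\pr\bigl(r\shift^k\bTheta/\norm{\shift^k\bTheta}_{q,\alpha}\in C\bigr)$ collapses to $\pr(\bTheta\in C)$, and one reads off $\tailmeasure(C)=\pr(\bTheta\in C)\times\infty$ in one line. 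You instead stay with the $\bZ$-representation: from \eqref{eq:lp} at $h=0$ you get $\pr(\bTheta\in C)=\esp[\pseudonorm{\bZ_0}[E]^\alpha\ind{\bZ\in C}]$, which settles the easy direction, and for the delicate direction you correctly identify that this identity alone only controls $\{\bZ\in C,\ \pseudonorm{\bZ_0}[E]>0\}$; you then use the tilt shift formula \eqref{eq:tiltshift} with $H_0=\ind{\cdot\in C}$ and the shift invariance of $C$ to propagate the vanishing to $\esp[\pseudonorm{\bZ_h}[E]^\alpha\ind{\bZ\in C}]=0$ for every $h$, and conclude by a countable union together with $\pr(\bZ=\zeroseq\setE)=0$. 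Both arguments are sound; the paper's is shorter but leans on the nontrivial construction of \Cref{theo:equivalent-tailmeasure-spectraltailprocess}, whereas yours is more self-contained, needing only \Cref{prop:localtailprocess} and the TSF from \Cref{theo:tiltshift}, and it makes explicit where the shift invariance of both $\tailmeasure$ and $C$ is actually used.
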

\begin{proof}
  If $C$ is a cone, then Equation~\eqref{eq:polar-nu-Z} yields
  $\tailmeasure(C) = \pr(\bZ\in C) \times \infty$, which proves the first statement.

  If $\tailmeasure$ is shift-invariant with spectral tail process $\bTheta$ and $C$ is a shift
  invariant cone, the representation \eqref{eq:def-nuq} yields
  \begin{align*}
    \tailmeasure(C)
    &=\sum_{k\in\Zset} \int_0^\infty \pr\Big(r\shift^k \bTheta/ \norm{\shift^k\bTheta}_{q,\alpha} \in C\Big)\alpha r^{-\alpha-1}\rmd r\\
    &=\sum_{k\in\Zset} \int_0^\infty \pr\Big(\bTheta \in C\Big)\alpha r^{-\alpha-1}\rmd r =\pr(\bTheta\in C)\times \infty  \; .
  \end{align*}
  This proves the second statement.
\end{proof}

\subsection{Another representation of the tail measure}
\label{sec:other-construction}

We propose here another construction proof of
\Cref{theo:equivalent-tailmeasure-spectraltailprocess}. It is based on the infargmax functional $I$
defined on $\seqspace\setE$ by
\begin{align*}
  I(\bx) =
  \begin{cases}
    - \infty & \mbox{ if } \limsup_{k\to-\infty} \pseudonorm{\bx_k}[E] =\sup_{k\in\Zset} \pseudonorm{\bx_k}[E] \; , \\
    j \in \Zset & \mbox{ if } \sup_{k\leq j-1}\pseudonorm{\bx_k}[E] < \pseudonorm{\bx_j}[E]=\sup_{k\in\Zset} \pseudonorm{\bx_k}[E] \; , \\
    +\infty & \mbox{if} \sup_{k\leq j}\pseudonorm{\bx_k}[E] < \sup_{k\in\Zset} \pseudonorm{\bx_k}[E] \mbox{ for all $j$}\;.
  \end{cases}
\end{align*}
For $\bx\in\seqspace\setE\setminus\{\zeroseq\setE\}$, a sufficient condition for $I(\bx)\in\Zset$ is
$\lim_{|k|\to\infty} \pseudonorm{\bx_k}[E]=0$.

For two sequences $q\in[0,\infty)^\Zset$ and $\bx\in\setE^\Zset$, we define the pointwise
multiplication $q\cdot \bx$ by $(q\cdot \bx)_k = q_k\bx_k$, $k\in\Zset$.
\begin{proposition}
  \label{prop:infargmax-construction}
  Let $\bTheta$ be a process which satisfies the time change
  formula~(\ref{eq:time-change-formula-0}) and let $q\in(0,\infty)^\Zset$ be such that
  $\pr(I(q\cdot\bTheta)\in\Zset)=1$.  Define the measure $\tailmeasure_q$ on $\seqspace\setE$ by
  \begin{align*}
    \tailmeasure_q(A) 
    & = \sum_{j\in\Zset} \int_0^\infty \pr\left(r\shift^j\bTheta\in A\;,\ I(q\cdot \shift^j\bTheta)=j\right) \alpha r^{-\alpha-1} \rmd r 
      \;,\quad A\in\mcf \; . 
  \end{align*}
  Then $\tailmeasure_q$ does not depend on $q$ and defines a shift-invariant tail
  measure with tail spectral process $\bTheta$.
\end{proposition}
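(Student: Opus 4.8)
The plan is to identify $\tailmeasure_q$ with the \emph{unique} shift-invariant tail measure $\tailmeasure$ whose spectral tail process is $\bTheta$, whose existence and uniqueness are granted by \Cref{theo:equivalent-tailmeasure-spectraltailprocess}. That $\tailmeasure_q$ is $\alpha$-homogeneous and charges no mass at $\zeroseq\setE$ is immediate from its defining formula, so the whole content lies in matching it with $\tailmeasure$. The key tool is a radial identity. Writing $\rho$ for the (non shift-invariant) measure $\rho(B)=\int_0^\infty\pr(r\bTheta\in B)\alpha r^{-\alpha-1}\rmd r$, the identity~\eqref{eq:proof-identity} established in the proof of \Cref{theo:equivalent-tailmeasure-spectraltailprocess} (taken with $H=\ind{B}$) shows that $\rho$ and $\tailmeasure$ agree on $\{\pseudonorm{\bx_0}[E]>1\}$, hence by $\alpha$-homogeneity on every $\{\pseudonorm{\bx_0}[E]>c\}$, $c>0$, and therefore on $\{\pseudonorm{\bx_0}[E]>0\}$; since $\rho$ is carried by this cone, $\rho(\cdot)=\tailmeasure(\cdot\cap\{\pseudonorm{\bx_0}[E]>0\})$. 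Replacing $\bTheta$ by $\shift^j\bTheta$ and invoking shift-invariance of $\tailmeasure$ (together with $\shift^{-j}\{\pseudonorm{\bx_j}[E]>0\}=\{\pseudonorm{\bx_0}[E]>0\}$) yields, for every $j\in\Zset$,
\[
\rho(B)=\int_0^\infty\pr(r\bTheta\in B)\,\alpha r^{-\alpha-1}\rmd r,\qquad \int_0^\infty\pr(r\shift^j\bTheta\in B)\,\alpha r^{-\alpha-1}\rmd r=\tailmeasure\big(B\cap\{\pseudonorm{\bx_j}[E]>0\}\big).
\]

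Next I would feed this into the definition of $\tailmeasure_q$. Because the set $A_j:=\{I(q\cdot\bx)=j\}$ is scale-invariant, the event in the definition equals $\{r\shift^j\bTheta\in B\cap A_j\}$, so the radial identity gives
\[
\tailmeasure_q(B)=\sum_{j\in\Zset}\tailmeasure\big(B\cap A_j\cap\{\pseudonorm{\bx_j}[E]>0\}\big)=\tailmeasure\big(B\cap\{I(q\cdot\bx)\in\Zset\}\big).
\]
Here the last indicator $\{\pseudonorm{\bx_j}[E]>0\}$ is redundant since any $\bx\neq\zeroseq\setE$ with $I(q\cdot\bx)=j$ has $q_j\pseudonorm{\bx_j}[E]=\sup_k q_k\pseudonorm{\bx_k}[E]>0$, and the $A_j$ partition $\{I(q\cdot\bx)\in\Zset\}$. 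Thus $\tailmeasure_q=\ind{\{I(q\cdot\bx)\in\Zset\}}\,\tailmeasure$, and all three conclusions of the proposition ($q$-independence, shift-invariance, and spectral process $\bTheta$) follow at once \emph{provided} $\tailmeasure(\{I(q\cdot\bx)\notin\Zset\})=0$.

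The hard part is exactly this last identity, equivalently $\pr(I(q\cdot\shift^h\bTheta)\in\Zset)=1$ for every $h\in\Zset$. By \Cref{lem:determination-tailmeasure} it suffices to treat each finite-measure slice $\{\pseudonorm{\bx_h}[E]>1\}$, and the radial identity reduces $\tailmeasure(\{\pseudonorm{\bx_h}[E]>1\}\cap\{I\notin\Zset\})$ to $\pr(I(q\cdot\shift^h\bTheta)\notin\Zset)$, which vanishes for $h=0$ by hypothesis. For $h\neq 0$, applying the TCF~\eqref{eq:time-change-formula-1} to the $0$-homogeneous indicator of $\{I(q\cdot\bx)\notin\Zset\}$ gives $\pr(I(q\cdot\shift^h\bTheta)\notin\Zset,\ \pseudonorm{\bTheta_{-h}}[E]>0)=0$. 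The main obstacle, which I expect to be delicate, is to discard the residual slice $\{\pseudonorm{\bTheta_{-h}}[E]=0\}$: the TCF normalises at coordinate $0$ and hence gives no control there, and for a genuinely non-summable $q$ the events $\{I(q\cdot\shift^{h}\bTheta)\notin\Zset\}$ for distinct $h$ are different and cannot be recombined. I would therefore carry this out under the (automatically admissible and, as in \Cref{theo:equivalent-tailmeasure-spectraltailprocess}, natural) choice $\sum_k q_k^\alpha<\infty$, which already forces the hypothesis: a Borel--Cantelli argument on each finite-measure slice $\{\pseudonorm{\bx_h}[E]>1\}$, using $\tailmeasure(\{q_k\pseudonorm{\bx_k}[E]>\epsilon\})=(q_k/\epsilon)^\alpha$ and $\sum_k q_k^\alpha<\infty$, shows $\lim_{|k|\to\infty}q_k\pseudonorm{\bx_k}[E]=0$ for $\tailmeasure$-a.e.\ $\bx$, so that the sufficient condition for $I(q\cdot\bx)\in\Zset$ holds $\tailmeasure$-a.e.\ and the residual slice disappears.

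Granting $\tailmeasure(\{I\notin\Zset\})=0$, the identity $\tailmeasure_q=\tailmeasure$ holds for every admissible $q$; hence $\tailmeasure_q$ is independent of $q$ and is a shift-invariant tail measure with spectral tail process $\bTheta$. As a sanity check I would also verify shift-invariance directly from the definition: the shift-equivariance of the infargmax together with the reindexing $j\mapsto j+1$ in the defining sum give $\tailmeasure_q\circ\shift^{-1}=\tailmeasure_{\shift q}$, exactly as in the proof of \Cref{theo:equivalent-tailmeasure-spectraltailprocess}, and this equals $\tailmeasure_q$ by the $q$-independence just established.
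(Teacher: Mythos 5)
Your argument is correct in substance but takes a genuinely different route from the paper's. The paper works directly on the defining sum for $\tailmeasure_q$: the change of variable $r'=r/\pseudonorm{\bTheta_{-j}}[E]$ and the TCF turn the $j$-th term into $\int_1^\infty\esp[H(r\bTheta)\ind{I(q\cdot\bTheta)=j}]\alpha r^{-\alpha-1}\rmd r$, summation over $j$ recovers the identity~\eqref{eq:proof-identity} on the slice $\{\pseudonorm{\bx_0}[E]>1\}$, and the conclusion is then delegated to the end of the proof of \Cref{theo:equivalent-tailmeasure-spectraltailprocess}. You instead take the shift-invariant tail measure $\tailmeasure$ already produced by \Cref{theo:equivalent-tailmeasure-spectraltailprocess} as given, establish the radial identity $\int_0^\infty\pr(r\shift^j\bTheta\in B)\,\alpha r^{-\alpha-1}\rmd r=\tailmeasure(B\cap\{\pseudonorm{\bx_j}[E]>0\})$, and conclude $\tailmeasure_q=\ind{\{I(q\cdot\bx)\in\Zset\}}\,\tailmeasure$, so that all three assertions reduce to the single support statement $\tailmeasure(\{I(q\cdot\bx)\notin\Zset\})=0$. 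This is conceptually cleaner (independence of $q$, shift invariance and the identification of the spectral process drop out simultaneously) at the price of not being self-contained; the paper's computation is self-contained but must be repeated on every slice $\{\pseudonorm{\bx_h}[E]>1\}$.

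The ``hard part'' you isolate is real, and it is in fact a gap in the proposition as stated rather than in your proof. Carrying out the paper's computation on the slice $\{\pseudonorm{\bx_h}[E]>1\}$ produces the factor $\sum_{j}\ind{I(q\cdot\shift^h\bTheta)=j}=\ind{I(q\cdot\shift^h\bTheta)\in\Zset}$, and the stated hypothesis only controls $h=0$; this is exactly equivalent to your support statement. Without strengthening the hypothesis the conclusion can fail: take $\setE=[0,\infty)$ and the deterministic $2$-periodic process $\bTheta_k=\ind{k\text{ even}}$, which satisfies $\pseudonorm{\bTheta_0}[E]=1$ and the TCF~\eqref{eq:time-change-formula-0} (for odd $h$ both sides vanish, for even $h$ both sides equal $H_0(\bTheta)$), and choose $q_0=1$, $q_k=1/2$ for even $k\neq0$, $q_k=2-(1+|k|)^{-1}$ for odd $k<0$ and $q_k=1/4$ for odd $k>0$. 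Then $I(q\cdot\bTheta)=0$ almost surely, but $I(q\cdot\shift\bTheta)=-\infty$, so only the term $j=0$ survives in the defining sum and $\tailmeasure_q$ charges only the ray through $\bTheta$; it satisfies $\tailmeasure_q(\{\pseudonorm{\bx_1}[E]>1\})=0$ and is not shift-invariant. Your resolution --- restricting to $\sum_k q_k^\alpha<\infty$, which is precisely the regime singled out in the note following the proposition, and running Borel--Cantelli on each finite slice to get $\lim_{|k|\to\infty}q_k\pseudonorm{\bx_k}[E]=0$ for $\tailmeasure$-a.e.\ $\bx$ --- is correct and complete; alternatively one can keep general $q$ but assume $\pr(I(q\cdot\shift^h\bTheta)\in\Zset)=1$ for all $h\in\Zset$.
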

Note that any $q\in(0,\infty)^\Zset$ such that $\sum_{k\in\Zset}q_k^\alpha<\infty$ satisfies
$\pr(I(q\cdot\bTheta)\in\Zset)=1$. Indeed, the time change formula implies that
$\esp[\pseudonorm{\bTheta_h}[E]^\alpha]\leq 1$, $h\in\Zset$, so that
$\esp[\sum_{j\in\Zset} q_j^\alpha \pseudonorm{\bTheta_j}[E]^\alpha]<\infty$ and therefore
$\lim_{|j|\to \infty} q_j\pseudonorm{\bTheta_j}[E]=0$ and $I(q\cdot\bTheta)\in\Zset$ almost surely.

\begin{proof}
  It is straightforward to check that $\tailmeasure_q$ is $\alpha$-homogeneous and satisfies
  $\tailmeasure_q(\{\zeroseq\setE\})=0$. For all measurable function
  $H:\seqspace\setE\to[0,\infty)$, we have
  \begin{align*}
    \int_{\seqspace\setE} 
    & H(\bx) \ind{\pseudonorm{\bx_0}[E]>1} \tailmeasure_q(\rmd\bx) \\
    & = \sum_{j\in\Zset} \int_0^\infty   \esp\left[ H(r\shift^j\bTheta) 
      \ind{r\pseudonorm{\bTheta_{-j}}[E]>1\;,\ I(q\cdot \shift^j\bTheta)=j} \right]   \alpha r^{-\alpha-1} \rmd r \\
    & = \sum_{j\in\Zset} \int_1^\infty   \esp\left[ \pseudonorm{\bTheta_{-j}}[E]^\alpha H\left( \frac{r B^j\bTheta} {\pseudonorm{\bTheta_{-j}}[E]} \right)      
      \ind{I(q\cdot B^j \bTheta)=j}   \right]   \alpha r^{-\alpha-1} \rmd r \\
    & = \sum_{j\in\Zset} \int_1^\infty   \esp\left[ H\left( \frac{r \bTheta}{\pseudonorm{\bTheta_0}[E]}\right)  \ind{I(q\cdot \bTheta)=j}   \right]   
      \alpha r^{-\alpha-1} \rmd r \\
    & = \int_1^\infty \esp\left[ H(r\bTheta) \sum_{j\in\Zset}  \ind{I(q\cdot \bTheta)=j} \right]  \alpha r^{-\alpha-1} \rmd r
      = \int_1^\infty \esp[H(r\bTheta)]  \alpha r^{-\alpha-1} \rmd r \; .
  \end{align*}
  We used here the definition of $\tailmeasure_q$ from
  Proposition~\ref{prop:infargmax-construction}, the change of variable $r'=r/\pseudonorm{\bTheta_{-j}}[E]$
  (note that the event $\{\pseudonorm{\bTheta_{-j}}[E]=0\}$ has no contribution to the integrals), the time
  change formula, the fact that $\pr(\pseudonorm{\bTheta_0}[E]=1)=1$ and finally the assumption
  $\sum_{j\in\Zset}\pr(I(q\cdot \bTheta)=j)=1$.
  
  At this point, we have retrieved \Cref{eq:proof-identity} and the remainder of the proof
  follows exactly the same lines as the proof of
  \Cref{theo:equivalent-tailmeasure-spectraltailprocess}.
\end{proof}

\begin{remark}
  \label{rem:other-choices}
  In the particular case $\pr(I(\bTheta)\in\Zset)=1$, we can take $q\equiv 1$ and we get
  \begin{align*}
    \tailmeasure(A) 
    = \sum_{j\in\Zset} \int_0^\infty \pr\left(r\shift^j\bTheta\in A\;,\ I(\bTheta)=0\right) \alpha r^{-\alpha-1} \rmd r\;,\quad A\in\mcf \; .
  \end{align*}
  Introducing the process $\bQ$ such that $\mathcal{L}(\bQ)=\mathcal{L}(\bTheta \mid I(\bTheta)=0)$,
  we obtain
  \begin{align*}
    \tailmeasure(A) 
    = \pr(I(\bTheta)=0) \sum_{j\in\Zset} \int_0^\infty \pr(r\shift^j\bQ\in A) \alpha r^{-\alpha-1} \rmd r\;,\quad A\in\mcf \; . 
  \end{align*}
  This representation is similar as the one from Eq.~\eqref{eq:dissipative-1}. In fact, this is a
  special case of a moving shift representation of $\tailmeasure$, see \Cref{sec:dissipative}.
\end{remark}

\subsection{Moving shift representations and dissipative tail measures}
\label{sec:dissipative}
We consider in this section the relationship between the existence of a moving shift representation and the dissipative/dissipative decomposition of a tail measure. Note that ergodic properties of tail measures are also considered in \cite{samorodnitsky:owada:2012}, section 5.
We introduce only the minimum amount of ergodic theory and define the notion of dissipative tail measure. For more details on (infinite measure) ergodic theory, we refer to \cite{A98}. The $\sigma$-field on $\seqspace\setE$ generated by cones, or equivalently by $0$-homogeneous functions, is denoted by  $\mathcal{C}$.
\begin{definition}\label{def:dissipative}
The dynamical system $(\seqspace\setE,\mathcal{C},\tailmeasure,\shift)$ is said dissipative if there exists a cone $C_0\in\mathcal{C}$ such that the sets $\shift^h C_0$, $h\in\mathbb{Z}$, are pairwise disjoint and $\tailmeasure$ is supported by $D=\bigcup_{h\in\Zset}\shift^h C_0$, that is $\tailmeasure\left(\seqspace\setE\setminus D\right)=0$.
\end{definition}

On the other hand, \Cref{rem:dissipative-1,rem:other-choices} above motivate the following definition.
\begin{definition}
  \label{def:diss-representation}
  We say that a shift-invariant tail measure $\tailmeasure$ has a moving shit representation if there exists a
  stochastic process $\tbZ$ such that
  \begin{equation}
    \label{eq:diss-representation}
    \tailmeasure(A)=\sum_{h\in\Zset} \int_{0}^\infty \pr\left(r\shift^h \tbZ\in A\right)\alpha r^{-\alpha-1}\rmd r \; , \ \ A\in\mcf \; .
  \end{equation}
\end{definition}
The conditions $\tailmeasure(\{\zeroseq\setE\})=0$ and $\tailmeasure(\{\pseudonorm{\bx_0}[E]>1\})=1 $ entail
\begin{equation}
  \label{eq:cond-tildeZ}
  \pr(\tbZ=\zeroseq\setE)=0 \; , \ \ \sum_{h\in\Zset}\esp[\pseudonorm{\tbZ_h}[E]^\alpha]=1 \; .
\end{equation}
Indeed, we have
\[
\tailmeasure(\{\pseudonorm{\bx_0}[E]>1\}) = \sum_{h\in\Zset} \int_0^\infty \pr(r\pseudonorm{\tbZ_{-h}}[E]>1)\alpha
r^{-\alpha-1} \rmd r = \sum_{h\in\Zset}\esp[\pseudonorm{\tbZ_h}[E]^\alpha] \; .
\]
Conversely, it is easily proved that, for any stochastic process $\tbZ$ satisfying
\eqref{eq:cond-tildeZ}, the measure $\tailmeasure$ defined by \eqref{eq:diss-representation} is a
shift-invariant tail measure.

\begin{remark}
  \Cref{def:diss-representation} is strongly related to the notion of mixed moving maximum
  representation for max-stable process. If a max-stable process $\bX$ has a dissipative exponent
  measure with representation~\eqref{eq:diss-representation}, then it can be represented as
  \[
  \bX_h\stackrel{d}=\bigvee_{i\geq 1} U_i \tbZ^{(i)}_{h-T_i},\quad h\in\Zset \,,
  \]
  where $\sum_{i\geq 1}\delta_{(U_i,T_i)}$ is a Poisson random measure on $(0,\infty)\times \Zset$
  with intensity equal to the product of $\alpha u^{-\alpha-1}\mathrm{d}u$ with the counting measure
  on $\Zset$, and, independently, $\tbZ^{(i)}$ are independent copies of $\tbZ$. This is a mixed
  moving maximum representation and $\bX$ is generated by a dissipative flow \citep[Theorem
  8]{DK2016}.
\end{remark}

\begin{remark}
  \label{rem:Z-tildeZ}
  \Cref{theo:nu-to-X} states that any tail measure has a stochastic
  representation~\eqref{eq:polar-nu-Z}. One can wonder what is a stochastic representation for a
  tail measure $\tailmeasure$ given by a moving shift representation \eqref{eq:diss-representation}. A
  possible construction is as follows: starting from $\tbZ$, consider an independent $\Zset$-valued
  random variable $K$ such that $p_k=\pr(K=k)\in (0,1)$, $k\in \Zset$ and define 
  \begin{align}
    \label{bemask}
    \bZ= \sum_{k \in \Zset} p_k^{-1/\alpha} B^k \tbZ\, \ind{K=k} \; .  
  \end{align}
  In this construction $\bZ$ appears as a randomly shifted and rescaled version of $\tbZ$. It is
  easy to check that the stochastic representation \eqref{eq:polar-nu-Z} and the dissipative
  representation \eqref{eq:diss-representation} define the same tail measure $\tailmeasure$.
\end{remark}

The converse is not true, that is a shift-invariant tail measure does not always have a moving shift
representation of the form~\eqref{eq:diss-representation}. The next result is strongly related to
\cite[Theorem~3]{DK2016}. We say that $\tailmeasure$ (resp. $\bZ$, $\bTheta$) is supported by
$A\in\mcf$ if $\tailmeasure(A^c)=0$ with $A^c$ the complement of $A$ in $\seqspace\setE$ (resp.
$\pr(\bZ\in A^c)=0$, $\pr(\bTheta\in A^c)=0$).
\begin{proposition}
  \label{prop:dissipative}
  Let $\tailmeasure$ be a shift-invariant tail measure. The following statements are equivalent:
  \begin{enumerate}[(i),wide=0pt]
  \item \label{item:dissipative}  $(\seqspace\setE,\mathcal{C},\tailmeasure,\shift)$ is dissipative; 
  \item \label{item:movingshift} $\tailmeasure$ has a moving shift representation \eqref{eq:diss-representation};
  \item \label{item:support-summable}   $\tailmeasure$ is supported by
   $\{\bx:\sum_{h\in\Zset}\pseudonorm{\bx_h}^\alpha<\infty\}$;
  \item \label{item:support-0}   $\tailmeasure$ is supported by $\{\bx:\lim_{|h|\to\infty}\pseudonorm{\bx_h}[E]=0\}$;
  \item \label{item:support-infargmax}   $\tailmeasure$ is supported by $\{\bx:I(\bx)\in\Zset\}$.
  \end{enumerate}
\end{proposition}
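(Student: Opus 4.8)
The plan is to establish the cyclic chain $(iii)\Rightarrow(iv)\Rightarrow(v)\Rightarrow(i)\Rightarrow(ii)\Rightarrow(iii)$. The three implications among the support statements are essentially set inclusions. For $(iii)\Rightarrow(iv)$ I would note that convergence of $\sum_{h}\pseudonorm{\bx_h}[E]^\alpha$ forces its terms to vanish, so $\{\bx:\sum_{h}\pseudonorm{\bx_h}[E]^\alpha<\infty\}\subseteq\{\bx:\lim_{|h|\to\infty}\pseudonorm{\bx_h}[E]=0\}$ and the support carries over. For $(iv)\Rightarrow(v)$ I would invoke the sufficient condition recorded just after the definition of $I$: on the set $\{\bx:\lim_{|h|\to\infty}\pseudonorm{\bx_h}[E]=0\}$ every non-zero sequence has $I(\bx)\in\Zset$, and since $\tailmeasure(\{\zeroseq\setE\})=0$ this places the support inside $\{\bx:I(\bx)\in\Zset\}$.

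The first structural step is $(v)\Rightarrow(i)$, where I would exploit that $I$ is $0$-homogeneous and shift-equivariant, $I(\shift\bx)=I(\bx)+1$. Setting $C_0=\{\bx:I(\bx)=0\}$, which is a measurable cone and hence lies in $\mathcal{C}$, one checks $\shift^hC_0=\{\bx:I(\bx)=h\}$; these sets are pairwise disjoint because $I$ is single-valued, and their union is exactly $\{\bx:I(\bx)\in\Zset\}$, which has full $\tailmeasure$-measure by $(v)$. This is precisely the dissipativity of $(\seqspace\setE,\mathcal{C},\tailmeasure,\shift)$.

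The main work is $(i)\Rightarrow(ii)$. Starting from a dissipative decomposition with cone $C_0$ and full-measure support $D=\bigcup_{h}\shift^hC_0$, countable additivity and shift-invariance give $\tailmeasure(A)=\sum_{h}\tailmeasure(A\cap\shift^hC_0)=\sum_{h}\tailmeasure_0((\shift^h)^{-1}A)$, where $\tailmeasure_0$ is the restriction of $\tailmeasure$ to $C_0$ and we use $A\cap\shift^hC_0=\shift^h((\shift^h)^{-1}A\cap C_0)$. The key point is to endow $\tailmeasure_0$ with a polar representation: it is $\sigma$-finite (being dominated by $\tailmeasure$, which is $\sigma$-finite by \Cref{lem:determination-tailmeasure}) and $\alpha$-homogeneous (since $C_0$ is a cone), and choosing $\tau(\bx)=\sup_{h}q_h\pseudonorm{\bx_h}[E]$ with $\sum_{h}p_hq_h^\alpha<\infty$, where $p_h=\tailmeasure(\{\pseudonorm{\bx_h}[E]>1\})$, yields $\tailmeasure_0(\{\tau>1\})\in(0,\infty)$. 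The polar construction from the proof of \Cref{theo:tiltshift} then produces a process $\tbZ$, carried by $C_0$, with $\tailmeasure_0(B)=\int_0^\infty\pr(r\tbZ\in B)\alpha r^{-\alpha-1}\rmd r$; substituting and using that $r\tbZ\in(\shift^h)^{-1}A$ iff $r\shift^h\tbZ\in A$ produces exactly the moving shift representation \eqref{eq:diss-representation}. I expect the verification that $\tailmeasure_0$ admits this polar decomposition — in particular the finiteness and strict positivity of $\tailmeasure_0(\{\tau>1\})$ (the latter because $\tailmeasure(C_0)>0$, as otherwise $\tailmeasure(D)=0$ would contradict $\tailmeasure\neq0$) and the fact that the spherical part is carried by $C_0$ — to be the main obstacle, although it is mostly a careful transcription of the argument already used for \Cref{theo:tiltshift}.

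Finally, $(ii)\Rightarrow(iii)$ closes the loop. Given \eqref{eq:diss-representation}, the normalization \eqref{eq:cond-tildeZ} gives $\sum_{h}\esp[\pseudonorm{\tbZ_h}[E]^\alpha]=1$, hence $\sum_{h}\pseudonorm{\tbZ_h}[E]^\alpha<\infty$ almost surely; that is, $\tbZ$ is supported by the shift-invariant cone $S=\{\bx:\sum_{h}\pseudonorm{\bx_h}[E]^\alpha<\infty\}$. Since $S$ is a cone, $r\shift^h\tbZ\in S^c$ iff $\shift^h\tbZ\in S^c$, and shift-invariance of $S$ gives $\pr(\shift^h\tbZ\in S^c)=\pr(\tbZ\in S^c)=0$ for every $h$; summing over $h$ in \eqref{eq:diss-representation} yields $\tailmeasure(S^c)=0$, which is $(iii)$.
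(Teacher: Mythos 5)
Your proof is correct and follows essentially the same route as the paper: the same cyclic chain of implications (merely traversed in a different order), the trivial set inclusions for the support statements, the choice $C_0=\{\bx: I(\bx)=0\}$ for dissipativity, and the decomposition $\tailmeasure=\sum_h\tailmeasure_0\circ\shift^{-h}$ combined with a polar representation of $\tailmeasure_0$ for the moving shift representation. Your treatment of (i)$\Rightarrow$(ii) is in fact slightly more careful than the paper's, which reuses the global process $\bZ$ from \eqref{eq:polar-nu-Z} as $\tbZ$ without noting that it must first be restricted to the cone $C_0$; your direct polar decomposition of $\tailmeasure_0$ handles this cleanly.
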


\begin{proof}
  \begin{enumerate}[-,wide=0pt]
  \item \ref{item:dissipative}~$\Rightarrow$~\ref{item:movingshift}: let $C_0$ be
      as in \Cref{def:dissipative}. According to \Cref{theo:tiltshift}, there exists an
      $\setE$-valued stochastic process $\bZ$ which satisfies~(\ref{eq:cond-Z})
      and~(\ref{eq:polar-nu-Z}). Therefore, the restriction $\tailmeasure_0$ of the tail measure
      $\tailmeasure$ to $C_0$ can be represented as
      \[
      \tailmeasure_0(A)=\int_0^\infty \mathbb{P}(r\bZ\in A)\alpha r^{-\alpha-1}\rmd r  \; ,
      \]
      for all measurable sets $A\subset C_0$.  The fact that $\tailmeasure$ is dissipative implies
      that $\tailmeasure=\sum_{h\in\Zset}\tailmeasure_0\circ \shift^{-h}$ and hence that
      $\tailmeasure$ admits the representation \eqref{eq:diss-representation} with $\tbZ=\bZ$.  
  \item \ref{item:movingshift}~$\Rightarrow$~\ref{item:support-summable}: If $\tailmeasure$ has a
    dissipative representation \eqref{eq:diss-representation}, then $\tbZ$ satisfies
    \eqref{eq:cond-tildeZ} and $\mathbb{E}[\sum_{h\in\Zset}\pseudonorm{\tbZ_h}[E]^\alpha]<\infty$ implies that
    $\tbZ$ is supported by $\{\bx:\sum_{h\in\Zset}\pseudonorm{\bx_h}[E]^\alpha<\infty\}$. Then, the
    representation~(\ref{eq:diss-representation}) implies that this set also supports
    $\tailmeasure$.
  \item
    \ref{item:support-summable}~$\Rightarrow$~\ref{item:support-0}~$\Rightarrow$~\ref{item:support-infargmax}:
    these implications are trivial since $\sum_{h\in\Zset}\pseudonorm{\bx_h}[E]^\alpha<\infty$ implies
    $\lim_{|h|\to\infty}\pseudonorm{\bx_h}[E]=0$, which in turn implies $I(\bx)\in\Zset$ for
    $\bx\neq \zeroseq\setE$ (recall $\tailmeasure (\{\zeroseq\setE\})=0$).
  \item \ref{item:support-infargmax}~$\Rightarrow$~\ref{item:dissipative}: take $C_0=\{\bx: I(\bx)=0\}$ to check that $\tailmeasure$ is dissipative.
  \end{enumerate}
\end{proof}

\begin{remark}
  Since the sets $\{\bx:\sum_{h\in\Zset}\pseudonorm{\bx_h}[E]^\alpha<\infty\}$,
  $\{\bx:\lim_{|h|\to\infty}\pseudonorm{\bx_h}[E]=0\}$ and $\{\bx:I(\bx)\in\Zset\}$ are shift-invariant
  cones, \Cref{lem:support-cone} implies that \ref{item:support-summable}, \ref{item:support-0} and
  \ref{item:support-infargmax} can be equivalently expressed with $\bZ$ or $\bTheta$ where $\bZ$ is a
  stochastic representation of $\tailmeasure$ as in \eqref{eq:polar-nu-Z} and $\bTheta$ is the corresponding 
  spectral tail process. 
\end{remark}

\subsection{Maximal indices}
\label{sec:maxindices}
We introduce in this section the maximal indices of a shift-invariant tail measure $\tailmeasure$
that are closely connected with the extremal indices of regularly varying stationary time series,
see \Cref{sec:extremal-indices} below.

Given an $\alpha$-homogeneous shift-invariant tail measure $\tailmeasure$ and a $1$-homogeneous
functional~$\tau:\seqspace{\setE}\to [0,\infty]$ such that $\tailmeasure(\{\tau(\bx)>1\})=1$ , we
define the quantity $\theta_\tau\in [0,1]$, called maximal index, by 
\begin{equation}
  \label{eq:def-max-ind}
  \theta_\tau 
  = \lim_{n\to \infty} \frac{1}{n} \tailmeasure\left(\left\{\max_{0\leq h\leq n-1}\tau(\shift^{h} \bx)>1  \right\}\right) \; . 
\end{equation}
The existence of the limit is a consequence of Fekete's subadditive lemma. The shift invariance of
$\tailmeasure$ implies that the sequence
$u_n=\tailmeasure\left(\left\{\max_{0\leq h \leq n-1}  \tau(\shift^{h} \bx)>1 \right\} \right)$, $n\geq 1$, is
subadditive. As a consequence, $u_n/n$ converge to $\inf_{n\geq 1}u_n/n$ and the limit is in $[0,1]$
since the sequence is non-negative and $u_1=1$.

The next result shows that the maximal indices of a dissipative tail measure are positive and
provides expressions of the maximal indices in terms of the stochastic representation and the
spectral tail process of the tail measure.
\begin{proposition}
  \label{prop:max-indices}
  Assume that $\tailmeasure$ is dissipative and that the $1$-homogeneous measurable function
  $\tau:\seqspace{\setE}\to [0,\infty]$ satisfies $\tailmeasure(\{\tau(\bx)>1\})=1$. Then
  $\theta_\tau>0$ and
  \begin{align*}
    \theta_\tau 
    & = \esp\left[ \sup_{h\in \Zset} \tau^\alpha( B^{h} \widetilde \bZ )\right]
     = \esp\left[\frac{ \sup_{h \in \Zset } \tau^\alpha( B^{h} \bTheta )}{\sum_{h \in \Zset} \pseudonorm{\bTheta_h}[E]^\alpha}\right] 
     = \pr(I(\bTheta)=0)\esp\left[ \sup_{h\in \Zset} \tau^\alpha( B^{h} \bQ )\right] \; , 
  \end{align*}
  with $\tbZ$ as in the  dissipative representation \eqref{eq:diss-representation}, $\bTheta$ the
  spectral tail process of $\tailmeasure$ and~$\bQ$ is a random sequence in $\seqspace\setE$ with
  distribution $\mathcal{L}(\bTheta \mid I(\bTheta)=0)$ as in Remark~\ref{rem:other-choices}.
\end{proposition}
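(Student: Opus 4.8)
The plan is to first establish the formula $\theta_\tau=\esp[\sup_{h\in\Zset}\tau^\alpha(\shift^h\tbZ)]$ from the moving shift representation, and then deduce the other two expressions by specializing the representation. Since $\tailmeasure$ is dissipative, \Cref{prop:dissipative} gives the moving shift representation $\tailmeasure(A)=\sum_{j\in\Zset}\int_0^\infty\pr(r\shift^j\tbZ\in A)\alpha r^{-\alpha-1}\rmd r$. I would start by rewriting the quantity $u_n=\tailmeasure(\{\max_{0\le h\le n-1}\tau(\shift^h\bx)>1\})$ from \eqref{eq:def-max-ind}. Using $\shift^h(r\shift^j\tbZ)=r\shift^{h+j}\tbZ$ and the $1$-homogeneity of $\tau$, the event $r\shift^j\tbZ\in\{\max_{0\le h\le n-1}\tau(\shift^h\bx)>1\}$ reads $r\max_{0\le h\le n-1}\tau(\shift^{h+j}\tbZ)>1$; the elementary identity $\int_0^\infty\ind{rM>1}\alpha r^{-\alpha-1}\rmd r=M^\alpha$ (valid for every $M\in[0,\infty]$) together with Tonelli's theorem then gives, after reindexing $k=h+j$,
\[
u_n=\sum_{j\in\Zset}\esp\Bigl[\max_{j\le k\le j+n-1}V_k\Bigr],\qquad V_k:=\tau^\alpha(\shift^k\tbZ)\;.
\]

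The heart of the proof is to show $u_n/n\to\esp[\sup_{k\in\Zset}V_k]$. The crucial input is the normalization $\tailmeasure(\{\tau(\bx)>1\})=1$, which through the same computation reads $\esp[\sum_{k\in\Zset}V_k]=1$; hence $\sum_kV_k<\infty$ almost surely, so $V_k\to0$ as $|k|\to\infty$ and $\sup_kV_k\le\sum_kV_k<\infty$ a.s. I would then set $S_n=\sum_{m\in\Zset}\max_{m\le k\le m+n-1}V_k$, which is finite a.s.\ (bounded by $n\sum_kV_k$) and satisfies $u_n=\esp[S_n]\le n$, and use the telescoping identity $S_{n+1}-S_n=\sum_{m\in\Zset}\bigl(V_m-\max_{m+1\le k\le m+n}V_k\bigr)_+$, obtained by writing $\max_{m\le k\le m+n}V_k=\max\bigl(V_m,\max_{m+1\le k\le m+n}V_k\bigr)$ and shifting the index in the second term. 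Each summand is nonincreasing in $n$, bounded by $V_m$, with $\sum_mV_m$ integrable, so dominated convergence yields both the a.s.\ limit and $\esp[S_{n+1}-S_n]\to\esp\bigl[\sum_m(V_m-R_{m+1})_+\bigr]$, where $R_m:=\sup_{k\ge m}V_k$. Since $R_m=\max(V_m,R_{m+1})$, we have $(V_m-R_{m+1})_+=R_m-R_{m+1}$ and the sum telescopes to $\lim_{m\to-\infty}R_m-\lim_{m\to+\infty}R_m=\sup_kV_k-0=\sup_kV_k$, the right limit vanishing because $V_k\to0$. Thus $u_{n+1}-u_n\to\esp[\sup_kV_k]$, and Ces\`aro's lemma gives $\theta_\tau=\lim_nu_n/n=\esp[\sup_{h\in\Zset}\tau^\alpha(\shift^h\tbZ)]$, which is the first formula. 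Positivity is then immediate: $\esp[\sup_kV_k]\ge\sup_j\esp[V_j]>0$ because $\sum_j\esp[V_j]=1$.

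Finally I would obtain the remaining two formulas by repeating verbatim the computation above with two other representations of $\tailmeasure$. For the second formula, dissipativity forces $\pr(\sum_h\pseudonorm{\bTheta_h}[E]^\alpha<\infty)=1$ (by \Cref{prop:dissipative} and \Cref{lem:support-cone}), so the choice $q\equiv1$ in \Cref{rem:dissipative-1} is legitimate and provides a moving shift representation with $\tbZ=\bTheta/(\sum_k\pseudonorm{\bTheta_k}[E]^\alpha)^{1/\alpha}$; the $1$-homogeneity of $\tau$ gives $\tau^\alpha(\shift^k\tbZ)=\tau^\alpha(\shift^k\bTheta)/\sum_h\pseudonorm{\bTheta_h}[E]^\alpha$, and substituting into the first formula yields the second. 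For the third formula, dissipativity gives $\pr(I(\bTheta)\in\Zset)=1$, so \Cref{rem:other-choices} applies and represents $\tailmeasure$ as $\pr(I(\bTheta)=0)\sum_{j}\int_0^\infty\pr(r\shift^j\bQ\in A)\alpha r^{-\alpha-1}\rmd r$; the same telescoping computation, now carrying the multiplicative constant $\pr(I(\bTheta)=0)$ and the process $\bQ$ in place of $\tbZ$ (the identity $\esp[\sum_k\tau^\alpha(\shift^k\bQ)]=1/\pr(I(\bTheta)=0)<\infty$ ensuring the same integrability), produces the third. The main obstacle is precisely this limit interchange in the telescoping step: everything hinges on deducing $\sum_kV_k<\infty$ a.s.\ (equivalently $V_k\to0$) from the finiteness encoded in $\tailmeasure(\{\tau>1\})=1$, for this is what forces the boundary term $\lim_{m\to+\infty}R_m=\limsup_{k\to+\infty}V_k$ to vanish so that the telescoped sum collapses to $\sup_kV_k$; the domination by the integrable $\sum_mV_m$ is what legitimizes moving the limit inside the expectation.
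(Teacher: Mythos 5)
Your proof is correct and follows the same architecture as the paper's: use the moving shift representation to write $u_n=\sum_{j}\esp[\max_{j\le k\le j+n-1}V_k]$ with $V_k=\tau^\alpha(\shift^k\tbZ)$, deduce a.s.\ summability of $(V_k)$ from the normalization $u_1=1$, pass to the limit by dominated convergence, and obtain the second and third formulas by specializing $\tbZ$ exactly as in Remarks~\ref{rem:dissipative-1} and~\ref{rem:other-choices}. The one place you diverge is that the paper simply invokes the identity \eqref{eq:lem-max-indices} from \cite[Lemma~3.2]{smith:weissman:1996} for summable nonnegative sequences, whereas you reprove it from scratch via the telescoping decomposition $S_{n+1}-S_n=\sum_m(V_m-\max_{m+1\le k\le m+n}V_k)_+$, monotone/dominated convergence, and Ces\`aro; this makes your argument self-contained (and conveniently yields the a.s.\ limit and the convergence of expectations in one stroke) at the cost of a page of extra work, but it is a correct proof of the same lemma rather than a different route to the proposition.
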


\begin{remark} 
  \label{rem:candidate}
  For a dissipative tail measure $\tailmeasure$ and $\tau(\bx)=\pseudonorm{\bx_0}[E]$, we also have the
  following identity proved in \cite[Lemma~3.2]{planinic:soulier:2017}
  \begin{align*}
    \theta_\tau = \pr(\sup_{i\geq1} \pseudonorm{\bY_i}[E] \leq 1)  = \pr(\sup_{i\geq1} \pseudonorm{\bY_i}[E] > 1) \; ,
  \end{align*}
  where $\bY_i=Y\bTheta_i$, $i \in \Zset$ and $Y$ is a Pareto random variable with tail index
  $\alpha$, independent of the sequence $\{\bTheta_j\}$.  This means that the maximal
  index is in this case the candidate extremal index introduced in \cite{basrak:segers:2009}. The
  link with the usual extremal index will be made in \Cref{sec:regvar-ts}.
\end{remark}

The proof of \Cref{prop:max-indices} makes use of the following identity due to
\cite[Lemma~3.2]{smith:weissman:1996}: for a summable  sequence
$(u_h)_{h\in\Zset}\in \seqspace{[0,\infty)}$, 
  \begin{equation}
    \label{eq:lem-max-indices}
    \lim_{n\to\infty} \frac{1}{n}\sum_{h\in \Zset} \max_{0\leq k\leq n-1} u_{h+k}= \sup_{h\in\Zset} u_h\;.
  \end{equation}

\begin{proof}[Proof of Proposition~\ref{prop:max-indices}]
  Since $\tailmeasure$ is dissipative, we can introduce a dissipative representation
  \eqref{eq:diss-representation} and write
  \begin{align*}
    \tailmeasure\left(\max_{0\leq k\leq n-1} \tau(\shift^k \bx)>1 \right)
    & = \sum_{h\in\Zset}\int_0^\infty \mathbb{P}\left( r\max_{0\leq k\leq n-1} \tau(\shift^{k+h} \tbZ)>1\right) \alpha r^{-\alpha-1}\rmd r\\
    & = \sum_{h\in\Zset}\mathbb{E}\left[ \max_{0\leq k\leq n-1} \tau^\alpha(\shift^{k+h} \tbZ)\right] \; .
  \end{align*}
  For $n=1$, we have in particular
  $\sum_{h\in\Zset}\mathbb{E}\left[ \tau^\alpha(\shift^{h} \tbZ)\right]=1$ thanks to the normalizing
  condition $\tailmeasure(\tau(\bx)>1)=1$.  This proves that the sequence
  $u_h=\tau^\alpha(\shift^{h} \tbZ)$, $h\in\Zset$, is almost surely summable and
  (\ref{eq:lem-max-indices}) implies
  \[
  \lim_{n\to\infty} \frac{1}n\sum_{h\in\Zset} \max_{0\leq k\leq n-1} \tau^\alpha(\shift^{k+h} \tbZ)
  = \sup_{h\in\Zset}\tau^\alpha(\shift^{h} \tbZ) \; ,\ \ \mbox{almost surely} \; .
  \]
  Furthermore, for all $n\geq 1$, the left hand side in the previous equation is bounded from above
  by $\sum_{h\in\Zset} \tau^\alpha(\shift^{h} \tbZ)$ which has finite expectation. Lebesgue 's
  dominated convergence theorem implies
  \begin{align*}
    \theta_\tau
    & = \lim_{n\to\infty} \frac{1}{n}\tailmeasure\left(\max_{0\leq k\leq n-1} \tau(\shift^k \bx)>1 \right) \\
    & = \lim_{n\to\infty} \frac{1}{n}\sum_{h\in\Zset}\mathbb{E}\left[ \max_{0\leq k\leq n-1} \tau^\alpha(\shift^{k+h} \tbZ)\right]
      = \mathbb{E}\left[ \sup_{h\in\Zset}\tau^\alpha(\shift^{h} \tbZ)\right] \; .
\end{align*}
This proves the first formula.  The second and third expressions of $\theta_\tau$ are special cases
obtained for $\tbZ=\bTheta/(\sum_{k\in\Zset}\pseudonorm{\bTheta_k}[E]^\alpha)^{1/\alpha}$ and
$\tbZ=\pr^{1/\alpha}(I(\bTheta)=0) \bQ$, see Remarks~\ref{rem:dissipative-1}
and~\ref{rem:other-choices}.
\end{proof} 

\section{Regularly varying time series on a metric space}
\label{sec:regvar-ts}
In this section, we will build a regularly varying time series with a prescribed tail measure. For
this purpose, we first recall the most important definitions and properties of $\mcm_0$ convergence
and regular variation on a metric space.
For the sake of clarity, the results are stated for a general metric space $F$ in section 3.1 and 3.2 and we consider  the specific case $\setF=\seqspace\setE$ in later sections.

\subsection{Regular variation on a metric space}
We follow here \cite[Section~3]{hult:lindskog:2006}. Let $(\setF,\dist)$ be a metric space and
let~$\zero{F}$ be an element of $\setF$. We assume that there exists a \textcolor{blue}{continuous} map $(s,\bx)\to s\bx$ from
$[0,\infty)\times\setF$ to $\setF$ such that for all $\bx\in\setF$ and $s\leq t\in(0,\infty)$,
$s(t\bx)=(st)\bx$, $0\bx=\zero\setF$ and
\begin{align*}
  \dist(\zero{F},s\bx) \leq  \dist(\zero{F},t\bx) \; .
\end{align*}
Such a map will be called a distance compatible outer multiplication. We denote the ball with center
at $\zero{F}$ and radius $r\geq0$ by $B_r$.  We endow $\setF$ with its Borel $\sigma$-field.

Let $\mcm_0(\setF)$ be the set of boundedly finite measures on $\setFzero$, that is measures $\nu$
such that $\nu(A)<\infty$ for all measurable sets $A$ such that $A\cap B_r=\emptyset$ for some
$r>0$. Such sets will be called separated from $\zero{F}$. The null measure will be denoted
by $\nullmeasure$.  We will say that a sequence $\{\nu_n,n\geq1\}$ of measures in $\mcm_0(\setF)$
converges in $\mcm_0(\setF)$ to a measure $\nu$, which we will denote by
$\nu_n\convmzero{\setF}\nu$, if
\begin{align*}
  \lim_{n\to\infty} \nu_n(A) = \nu(A) \; , 
\end{align*}
for all measurable set $A$ separated from $\zero{F}$ and such that $\nu(\partial A)=0$. This type of
convergence is referred to as weak$^\#$ convergence in \cite{daley:vere-jones:bookvolI} and simply
vague convergence in \cite{kallenberg:2017}. For more details on the relationship between these different types of convergence, we refer to \cite{LRR14} or \cite{BP18}.

By \cite[Lemma~4.1]{kallenberg:2017},
$\lim_{n\to\infty} \nu_n\convmzero{\setF}\nu$ if and only if $\lim_{n\to\infty}\nu_n(f)=\nu(f)$ for
all bounded Lipschitz continuous functions with support separated from
zero. \cite{hult:lindskog:2006} proved that convergence in $\mcm_0(\setF)$ is equivalent to weak
convergence on the complement of balls centered at $\zero{F}$. More precisely,
\begin{align}
  \nu_n\convmzero{\setF} \nu \Longleftrightarrow \mbox{ for all but countably many } r >0 \; , 
  \ \ {\nu_n}_{\mid_{B_r^c}} \stackrel{w}{\longrightarrow} \nu_{\mid_{B_r^c}}
\end{align}
where $\nu_{\mid_A}$ is the measure $\nu$ restricted to the set $A$ and $\stackrel{w}\to$ denotes
weak convergence. Convergence in $\mcm_0$ can be metrized. Let $\rho_r$ be Prohorov's distance on
the set of finite measures defined on $B_r^c$. Let $\rho$ be the metric on $\mcm_0(\setF)$ defined
by:
\begin{align}
  \label{eq:def-dist-rho}
  \rho(\mu,\nu) = \int_0^\infty \rme^{-t} (\rho_r(\mu,\nu) \wedge1)  \rmd r \; , \ \ \ \mu,\nu\in\mcm_0(\setF)\; .
\end{align}
Then $(\mcm_0(\setF),\rho)$ is a complete separable metric space;
cf. \cite[Theorem~2.3]{hult:lindskog:2006}.

We can now define regular varying measures and random elements in $\setF$.
\begin{definition}
  \label{def:regvar-metric}
  \begin{itemize}
  \item A Borel measure $\mu$ on $\setF$ is said to be regularly varying if there exists a non
    decreasing sequence $\{a_n\}$ and a measure $\mu^*\in\mcm_0(\setF)$ such that
    $n\mu(a_n\cdot) \convmzero\setF \mu^*$. We then write
    $\mu\in\mathrm{RV}(\setF,\{a_n\},\mu^*)$.
  \item  An $\setF$-valued random element $\bX$ defined on a probability space
    $(\Omega,\mca,\pr)$ is said to be regularly varying if there exists a non decreasing
    sequence $\{a_n\}$ tending to infinity and a nonzero measure $\nu$ on $\setFzero$ such that
    $n\pr(a_n^{-1}\bX\in\cdot)\convmzero\setF\nu$. We then write
    $\bX\in\mathrm{RV}(\setF,\{a_n\},\nu)$.
  \end{itemize}
\end{definition}
By \cite[Theorem~3.1]{hult:lindskog:2006}, if $\bX\in\mathrm{RV}(\setF,\{a_n\},\nu)$, then there
exists $\alpha>0$ which will be called the tail index of $\bX$ such that the measure $\nu$ is
$\alpha$-homogeneous and the sequence $\{a_n\}$ is regularly varying with index $1/\alpha$.  We will
need the following result which is a straightforward application of the mapping theorem
\cite[Theorem~2.5]{hult:lindskog:2006}. 

\begin{lemma}
  \label{lem:transfer-homogeneous-regvar}
  Let $(\setF,d)$ and $(\setF',d')$ be two complete separable metric spaces each endowed with a
  distance compatible outer multipication. Let $\zero\setF\in\setF$ and let $T:\setF\to\setF'$ be a
  1-homogeneous map such that $T(\zero\setF)=\zero{\setF'}$.  Set
  $\setF_0 = \setF\setminus\{\zero\setF\}$ and $\setF_0' = \setF'\setminus\{\zero{\setF'}\}$.  Let
  $\mu,\mu^*$ be a Borel measures on $\setF$ and let $\{a_n\}$ be a non decreasing sequence such
  that $\mu\in \mathrm{RV}(\setF_0,a_n,\mu^*)$. If $T$ is $\mu^*$ almost
  surely continuous,  continuous at $\zero\setF$, and $\mu^*\circ T^{-1}$ is not the null measure, then
  $\mu\circ T^{-1} \in \mathrm{RV}(\setF'_0,a_n,\mu^*\circ T^{-1})$.
\end{lemma}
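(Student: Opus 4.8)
The plan is to deduce the statement directly from the mapping theorem \cite[Theorem~2.5]{hult:lindskog:2006}, the only preparatory work being an elementary identity that lets the pushforward by $T$ commute with the dilations $a_n$. By \Cref{def:regvar-metric}, proving $\mu\circ T^{-1}\in\mathrm{RV}(\setF_0',\{a_n\},\mu^*\circ T^{-1})$ amounts to establishing the convergence $n(\mu\circ T^{-1})(a_n\,\cdot)\convmzero{\setF'}\mu^*\circ T^{-1}$ together with nondegeneracy of the limit, and the latter is exactly the hypothesis that $\mu^*\circ T^{-1}$ is not the null measure. So I would concentrate on the convergence.

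First I would record the scaling identity. Since $T$ is $1$-homogeneous, $T(u\bx)=uT(\bx)$ for all $u>0$ and $\bx\in\setF$, and since the dilation $\bx\mapsto u\bx$ is a bijection of $\setF_0$ onto itself with inverse the dilation by $u^{-1}$ (and likewise on $\setF_0'$), a short set-theoretic computation gives $T^{-1}(uB)=uT^{-1}(B)$ for every $u>0$ and every Borel set $B\subseteq\setF'$. Consequently, for all $n$ and all Borel $B$,
\begin{align*}
  n(\mu\circ T^{-1})(a_nB)=n\mu\big(T^{-1}(a_nB)\big)=n\mu\big(a_nT^{-1}(B)\big)=\big([n\mu(a_n\,\cdot)]\circ T^{-1}\big)(B)\;,
\end{align*}
so that $n(\mu\circ T^{-1})(a_n\,\cdot)=[n\mu(a_n\,\cdot)]\circ T^{-1}$ as measures. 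By hypothesis $n\mu(a_n\,\cdot)\convmzero{\setF}\mu^*$, so it remains only to show that the pushforward by $T$ is continuous from $\mcm_0(\setF)$ to $\mcm_0(\setF')$ at $\mu^*$.

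This is where I would invoke \cite[Theorem~2.5]{hult:lindskog:2006}, verifying its two hypotheses. The first, that $T$ be continuous $\mu^*$-almost everywhere, is assumed. The second is a properness condition at the poles: preimages under $T$ of sets separated from $\zero{\setF'}$ must be separated from $\zero\setF$, which is what guarantees both that $\mu^*\circ T^{-1}$ is boundedly finite and that the restriction-to-$B_r^c$ description of $\mcm_0$ convergence passes through $T$. I would check it using continuity of $T$ at $\zero\setF$ together with $T(\zero\setF)=\zero{\setF'}$: given $r>0$, continuity at $\zero\setF$ yields $\varepsilon>0$ with $T(B_\varepsilon)\subseteq B_r'$, where $B_r'$ is the ball of radius $r$ about $\zero{\setF'}$; hence $T^{-1}((B_r')^c)\cap B_\varepsilon=\emptyset$, i.e.\ $T^{-1}((B_r')^c)$ is separated from $\zero\setF$. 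With both hypotheses in hand, the mapping theorem gives $[n\mu(a_n\,\cdot)]\circ T^{-1}\convmzero{\setF'}\mu^*\circ T^{-1}$, and combined with the displayed identity this is exactly the desired convergence.

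The only genuinely delicate point — hence the step I would treat most carefully — is this properness at the pole $\zero{\setF'}$: without it the pushforward of a boundedly finite measure need not be boundedly finite and the mapping theorem breaks down. Everything else is the homogeneity bookkeeping above and an appeal to the cited theorem. In particular, once the convergence is established, the $\alpha$-homogeneity of $\mu^*\circ T^{-1}$ and the regular variation of $\{a_n\}$ with index $1/\alpha$ come for free from \cite[Theorem~3.1]{hult:lindskog:2006}, so no separate verification is needed.
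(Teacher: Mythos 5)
Your proposal is correct and follows essentially the same route as the paper: factor the dilation through $T^{-1}$ by $1$-homogeneity, invoke the mapping theorem of Hult and Lindskog, and verify its properness hypothesis (preimages of sets separated from $\zero{\setF'}$ are separated from $\zero\setF$) via continuity of $T$ at $\zero\setF$ together with $T(\zero\setF)=\zero{\setF'}$. The paper's proof is the same argument in slightly more condensed form.
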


\begin{proof}
  Define $\mu_n = n\mu(a_n\cdot)$. By assumption, $\mu_n\convmzero\setF\mu^*$. By homogeneity of
  $T$, $\mu_n\circ T^{-1} =n \mu\circ T^{-1}(a_n\cdot)$.  We want to apply
  \cite[Theorem~2.5]{hult:lindskog:2006} to prove that
  $\mu_n\circ T^{-1}\convmzero{\setF'}\mu^*\circ T^{-1}$. Since $T(\zero\setF) = \zero{\setF'}$,
  there only remain to prove that if $A$ is bounded away from $\zero{\setF'}$, then $T^{-1}(A)$ is
  bounded away from $\zero\setF$.  If $A\subset \setF'$ is bounded away from $\zero{\setF'}$, there
  exists $\epsilon>0$ such that $\by\in A$ implies $\dist(\by,\zero{\setF'})>\epsilon$. Since $T$ is
  continuous and $T(\zero\setF)=\zero{\setF'}$, there exists $\eta>0$ such that
  $\dist(\bx,\zero\setF)\leq \eta$ implies $\dist(T(\bx),\zero{\setF'})\leq\epsilon$. This proves
  that if $\bx\in T^{-1}(A)$ then $\dist(\bx,\zero\setF)>\eta$.
\end{proof}

\subsection{Regular varying Poisson point processes}

Let $\mcn_0(\setF)$ be the set of boundedly finite point measures on $\setFzero$, \ie\ measures
$\nu$ such that $\nu(A)\in\Nset$ for all bounded Borel set $A$ separated from $\zero\setF$. This
implies that $\nu$ has a finite number of points outside each ball centered at $\zero\setF$ and we
can write $\nu = \sum_{j\geq1} \delta_{\bx_j}$ where the points of $\nu$ are numbered in such a way that
$$
d(\zero\setF,\bx_i) \geq d(\zero\setF,\bx_j)
$$ 
if $i\leq j$.  It is then easily seen that $\mcn_0(\setF)$ is a closed subset of $\mcm_0(\setF)$ and that the 
convergence  $\nu_n\convmzero{\setF}\nu$ implies the convergence of points in $\setF$.

The restriction of the distance $\rho$ defined in~(\ref{eq:def-dist-rho}) to the space
$\mcn_0(\setF)$ has the following property. Let the null measure be denoted by $\nullmeasure$ and
let $\pi\in\mcn_0(\setF)$.  Let the largest distance of a point of $\pi$ to $\zero\setF$ be denoted
by $\largestpoint\pi[F]$, \ie\
\begin{align*}
  \largestpoint{\pi}[F] = \sup_{\bx\in\pi} d(\zero{\setF},\bx) \; . 
\end{align*}
If $r>\largestpoint{\pi}[F]$, then $\pi$ has no point outside $B_r$ and thus
$\rho_r(\nullmeasure,\pi)=0$. Moreover, by definition of the Prohorov distance,
\begin{align*}
  \rho_r(\nullmeasure,\pi) = \inf\{\alpha>0: \pi(F\cap B_r^c)\leq\alpha, \ F \mbox{ closed}\} = \pi(B_r^c) \; .
\end{align*}
That is, the Prohorov distance of a point measure to the zero measure is its number of points. 
Therefore, if $r>\largestpoint{\pi}[F]$, then   $\rho_r(\nullmeasure,\pi) =0$.  This yields
\begin{align}
  \label{eq:majoration-prohorov-nullmeasure}
  \rho(\nullmeasure,\pi) 
  & = \int_0^{\largestpoint{\pi}[F]} \rme^{-r} (\rho_r(\nullmeasure,\pi) \wedge1)  \rmd r  \leq \largestpoint{\pi}[F] \; . 
\end{align}
On the other hand, if $r<\largestpoint{\pi}[F]$ then $\rho_r(\nullmeasure,\pi)\geq1$ and
$1-\rme^{-x}\geq (x\wedge1)/2$, thus we have
\begin{align}
  \label{eq:minoration-prohorov-nullmeasure}
  \rho(\nullmeasure,\pi) 
  & \geq  \int_0^{\largestpoint{\pi}[F]}   \rme^{-r} \rmd r = (1-\rme^{-\largestpoint{\pi}[F]}) 
    \geq \frac{1}2 (\largestpoint{\pi}[F]\wedge1) \; . 
\end{align}
These bounds imply that a subset $A\subset \mcn_0(\setF)$ is separated from $\nullmeasure$ if there
exists $\epsilon>0$ such that $\largestpoint{\pi}[F]>\epsilon$ for all $\pi\in A$.

We define the mutiplitcation $(t,\nu)\to t\cdot\nu$ for $t\in(0,\infty)$
and $\nu\in\mcm_0(\setF)$ by
\begin{align*}
  t\cdot\nu(f) = \int_{\setE} f(t\bx) \nu(\rmd \bx)
\end{align*}
for all nonnegative measurable functions $f$. If $\nu=\sum_{j\geq1} \delta_{\bx_i}$ is a point
measure, then $t\cdot\nu=\sum_{j\geq1}\delta_{t\bx_j}$.  Multiplication is continuous with respect
to the product topology. For $\pi\in\mcn_0$ and $0 < s < t$, 
\begin{align*}
  \rho_r(\nullmeasure, s\pi) = \pi(s^{-1}B_r^c) \leq \pi(s^{-1}B_r^c)  =   \rho_r(\nullmeasure, s\pi) \; .
\end{align*}
Therefore we can define a regularly varying point process on $\setFzero$ as a regularly varying
element in $\mcn_0(\setF)$ in the sense of \Cref{def:regvar-metric}.

\begin{theorem}
  \label{theo:RV-N0}
  Let $\mu_0,\mu \in\mcm_0(\setF)$ and $\{a_n\}$ be a nondecreasing sequence such that
  $a_n\to\infty$ and $n\mu_0(a_n\cdot) \convmzero{\setF} \mu$ as $n\to\infty$. Let $\Pi$ be a
  Poisson point measure on $\setFzero$ with mean measure
  $\mu_0$. 
  Then $\Pi\in\mathrm{RV}(\mcn_0(\setF),\{a_n\},\mu^*)$ where $\mu^*$ is a measure on
  $\mcn_0(\setF)\setminus\{\nullmeasure\}$ defined by
  \begin{align*}
    \mu^*(B) = \int_{\setF} \ind{\delta_{\bx}\in B} \mu(\rmd \bx) \; , 
  \end{align*}
  for all Borel set $B$ of $\mcn_0(\setF)$ endowed with the distance $\rho$, and $\delta_{\bx}$
  denotes the Dirac mass at $\bx\in\setE$.  If $\mu$ is $\alpha$-homogeneous and $\Pi\sim PPP(\mu)$,
  then $\Pi\in\mathrm{RV}(\mcn_0(\setF),n^{1/\alpha},\mu^*)$.
\end{theorem}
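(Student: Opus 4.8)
The plan is to verify the defining $\mcm_0$-convergence at the level of the base space $\mcn_0(\setF)$, equipped with its zero element $\nullmeasure$, metric $\rho$ and outer multiplication $t\cdot\pi$. Write $\Pi_n=a_n^{-1}\cdot\Pi$; by the mapping property of Poisson measures this is again a Poisson point measure, with mean measure $\mu_n:=\mu_0(a_n\cdot)$, and the hypothesis reads $n\mu_n\convmzero{\setF}\mu$. First I note that $\mu^*$ is a nonzero, boundedly finite measure on $\mcn_0(\setF)\setminus\{\nullmeasure\}$: if $B$ is separated from $\nullmeasure$ then $\largestpoint{\pi}[F]>\epsilon$ on $B$ for some $\epsilon>0$, so $\{\bx:\delta_{\bx}\in B\}\subseteq B_\epsilon^c$ and $\mu^*(B)\le\mu(B_\epsilon^c)<\infty$. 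By \cite[Lemma~4.1]{kallenberg:2017} it then suffices to prove $n\esp[f(\Pi_n)]\to\mu^*(f)=\int_{\setF}f(\delta_{\bx})\,\mu(\rmd\bx)$ for every bounded Lipschitz $f:\mcn_0(\setF)\to\Rset$ with support separated from $\nullmeasure$. Fix such an $f$, with Lipschitz constant $L$ and $f(\pi)=0$ whenever $\largestpoint{\pi}[F]\le\epsilon_0$, and pick $\epsilon\in(0,\epsilon_0]$ with $\mu(\partial B_\epsilon)=0$.

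I would then split $\Pi_n=\Pi_n^{>}+\Pi_n^{\le}$ into its restrictions to $B_\epsilon^c$ and to $B_\epsilon$, which are independent Poisson measures, and establish two facts. (a) The inner part is asymptotically negligible: for generic $\delta\in(0,\epsilon)$ the annulus $A_{\delta,\epsilon}=\{\bx:\delta<d(\zero\setF,\bx)\le\epsilon\}$ is separated from $\zero\setF$, so
\begin{align*}
  \esp[\Pi_n^{\le}(A_{\delta,\epsilon})]=\mu_n(A_{\delta,\epsilon})=n^{-1}(n\mu_n)(A_{\delta,\epsilon})\to0 \;,
\end{align*}
since $(n\mu_n)(A_{\delta,\epsilon})\to\mu(A_{\delta,\epsilon})<\infty$; hence $\largestpoint{\Pi_n^{\le}}[F]\le\delta$ with probability tending to one, and as $\delta$ is arbitrary and $\largestpoint{\Pi_n^{\le}}[F]\le\epsilon$, we get $\largestpoint{\Pi_n^{\le}}[F]\to0$ in probability, whence $\Pi_n^{\le}\to\nullmeasure$ in probability by \eqref{eq:majoration-prohorov-nullmeasure}. (b) A single big point governs the outer part: $N:=\Pi_n^{>}(\setF)$ is Poisson with mean $\lambda_n=\mu_n(B_\epsilon^c)$, and $\mu(\partial B_\epsilon)=0$ gives $n\lambda_n\to\lambda:=\mu(B_\epsilon^c)$, so $\lambda_n\to0$, $n\pr(N=1)=n\lambda_n\rme^{-\lambda_n}\to\lambda$ and $n\pr(N\ge2)\le n\lambda_n^2/2\to0$.

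To conclude, since $f$ vanishes on $\{\largestpoint{\pi}[F]\le\epsilon\}$ and $\epsilon\le\epsilon_0$, the event $\{N=0\}$ contributes nothing, so $n\esp[f(\Pi_n)]=n\esp[f(\Pi_n)\ind{N=1}]+n\esp[f(\Pi_n)\ind{N\ge2}]$, and the last term is at most $\|f\|_\infty\,n\pr(N\ge2)\to0$. On $\{N=1\}$ the Poisson structure yields $\Pi_n=\delta_{Y_n}+\Pi_n^{\le}$ with $Y_n$ independent of $\Pi_n^{\le}$ and distributed as $\mu_n(\,\cdot\,\cap B_\epsilon^c)/\lambda_n=(n\mu_n)(\,\cdot\,\cap B_\epsilon^c)/(n\lambda_n)$, which converges weakly to $\mu(\,\cdot\,\cap B_\epsilon^c)/\lambda$, the law of some $Y$. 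Because $\bx\mapsto\delta_{\bx}$ is continuous and $f$ bounded continuous, $\esp[f(\delta_{Y_n})]\to\esp[f(\delta_Y)]$; and by (a) together with the pointwise bound $\rho(\delta_{Y_n}+\Pi_n^{\le},\delta_{Y_n})\le\largestpoint{\Pi_n^{\le}}[F]\le\epsilon$ (the two measures differ only by $\Pi_n^{\le}$, whose points lie within distance $\largestpoint{\Pi_n^{\le}}[F]$ of $\zero\setF$, so $\rho_r=0$ for larger $r$) and bounded convergence, $\esp[|f(\delta_{Y_n}+\Pi_n^{\le})-f(\delta_{Y_n})|]\to0$. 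Hence $n\esp[f(\Pi_n)\ind{N=1}]=n\pr(N=1)\,\esp[f(\delta_{Y_n}+\Pi_n^{\le})]\to\lambda\,\esp[f(\delta_Y)]=\int_{\setF}f(\delta_{\bx})\,\mu(\rmd\bx)=\mu^*(f)$, which is the claim. For the final assertion, $\alpha$-homogeneity gives $n\mu(n^{1/\alpha}\cdot)=\mu$ identically, so the hypothesis holds with $\mu_0=\mu$ and $a_n=n^{1/\alpha}$, and the conclusion follows from the first part.

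The step I expect to be the main obstacle is (a): reconciling the fact that $\Pi_n^{\le}$ typically carries infinitely many points (accumulating at $\zero\setF$) with its negligibility in the $\rho$-metric. The naive Lipschitz estimate $\rho(\delta_{Y_n}+\Pi_n^{\le},\delta_{Y_n})\le1-\rme^{-\epsilon}$, using only that the two configurations agree outside $B_\epsilon$, does not vanish and is useless here. The correct mechanism is that the $n^{-1}$ normalization forces the mass of $\mu_n$ on every annulus separated from $\zero\setF$ to tend to $0$, so that $\largestpoint{\Pi_n^{\le}}[F]\to0$ and the whole inner configuration collapses to $\nullmeasure$; this insensitivity to mass near $\zero\setF$ is precisely what the $\mcm_0$ topology is built to exploit.
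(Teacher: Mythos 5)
Your proof is correct, but it takes a genuinely different route from the paper's. The paper reduces the claim to convergence of modified Laplace functionals via \Cref{theo:Laplace} in the appendix, after which the computation is a two-line manipulation of the Poisson Laplace functional $n\bigl(1-\exp\bigl[n^{-1}\int_{\setF}(\rme^{-f}-1)\,\rmd\mu_n\bigr]\bigr)\to\int_{\setF}(1-\rme^{-f})\,\rmd\mu$; the real work is hidden in the appendix, where the equivalence of $\mcm_0(\mcn_0(\setF))$-convergence and Laplace-functional convergence is established. You instead test directly against bounded Lipschitz functions supported away from $\nullmeasure$ (the characterization from \cite[Lemma~4.1]{kallenberg:2017}, which the paper also invokes) and implement the ``single large point'' heuristic --- which the paper only states as a remark after the theorem --- as the actual proof mechanism: the outer restriction $\Pi_n^{>}$ carries one point with probability $\sim\lambda/n$ and at least two with probability $o(1/n)$, while the inner restriction collapses to $\nullmeasure$ because the $n^{-1}$ normalization kills the mean mass of every annulus separated from $\zero\setF$, so that $\largestpoint{\Pi_n^{\le}}[F]\to0$ in probability and the Prohorov bound \eqref{eq:majoration-prohorov-nullmeasure} controls the perturbation. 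Your key steps all check out: the choice of continuity radii $\epsilon,\delta$, the independence of the Poisson restrictions, the conditional law of the single outer point, and the continuity of $\bx\mapsto\delta_{\bx}$. What each approach buys: the paper's argument is shorter at the point of use and reuses \Cref{theo:Laplace} (which is needed elsewhere in spirit), whereas yours is self-contained, avoids the appendix machinery entirely, and makes the structure of the limit measure $\mu^*$ (concentration on single-point configurations) transparent rather than an artefact of a Laplace-transform identity. Your closing diagnosis of where the difficulty sits --- that the naive Lipschitz bound over $B_\epsilon$ is useless and the $\mcm_0$ topology's insensitivity near $\zero\setF$ is what saves the argument --- is exactly right.
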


Note that the limit measure $\mu^\ast$ is the image of $\mu$ under the injection of $\setF$ into
$\mcn_0(\setF)$ defined by $\bx\mapsto \delta_{\bx}$. It is concentrated on the subset of point
measures that have exactly one point. The underlying heuristic is that given that $\Pi$ is large (in
the sense $d(0,\Pi)>u$ with $u\to\infty$), then $\Pi$ can be approximated by a random point measure
with only one large point. This is yet another instance of the so-called single large jump
principle.

\begin{proof}
  We need to prove the convergence
  \begin{equation}
    \label{eq:M0conv-PPP}
    n\pr(\Pi/a_n\in \cdot ) \convmzero{\mcn_0(\setF)}  \mu^{\ast}  \; .   
  \end{equation}
  By \Cref{theo:Laplace}, the convergence~(\ref{eq:M0conv-PPP}) holds if
  \begin{equation}
    \label{eq:M0conv-PPP-Laplace}
    \lim_{n\to\infty}    n\left(\esp\left[1-\rme^{-\int_{\setF}f(\bx/a_n)\Pi(\rmd \bx)}\right]\right) = 
    \int_{\mcn_0(\setF)} \left(1-\rme^{-\int_{\setF}f(\bx)\pi(\rmd \bx)}\right)\mu^\ast(\rmd \pi) \; ,
  \end{equation}
  for all continuous function $f:\setF\to [0,\infty)$ vanishing on a neighborhood of $\bszero_\setF$.  By
  definition of $\mu^\ast$, the right-hand side of~(\ref{eq:M0conv-PPP-Laplace}) is equal to 
  \[
  \int_{\mcn_0(\setF)} (1-\rme^{-\int_{\setF}f(\bx)\pi(\rmd\bx)})\mu^\ast(\rmd \pi)=\int_\setF
  (1-\rme^{-f(\bx)})\mu(\rmd\bx).
  \]
  On the other hand, since $\Pi$ is a Poisson point process, we have 
  \begin{align*}
    n\left(\esp\left[1-\rme^{-\int_{\setF}f(\bx/a_n)\Pi(\rmd\bx)}\right]\right)
    & = n\left(1-\exp\left[\int_{\setF}\left(\rme^{-f(\bx/a_n)}-1\right)\mu_0(\rmd\bx)\right]\right) \\
    & = n\left(1-\exp\left[n^{-1}\int_{\setF}-\left(1-\rme^{-f(\bx)}\right)\mu_n(\rmd\bx)\right]\right) \; , 
  \end{align*}
  with $\mu_n=n\mu_0(a_n\cdot)$. The function $1-\rme^{-f}$ is non negative, bounded and with
  support separated from zero; moreover $\mu_n \to \mu$ in $\mcm_0$ by assumption, therefore
  \begin{align*}
 \lim_{n\to\infty}   n\left(\esp\left[1-\rme^{-\int_{\setF}f(\bx/a_n)\Pi(\rmd\bx)}\right]\right)
    & = \lim_{n\to\infty} n \left(1-\exp\left[n^{-1}\int_{\setF}\left(\rme^{-f(\bx)}-1\right)\mu_n(\rmd\bx)\right]\right)\\
    & =   \int_{\setF}\left(1-\rme^{-f(\bx)}\right)\mu(\rmd\bx) \; .
\end{align*}
This proves the convergence \eqref{eq:M0conv-PPP-Laplace} and the claimed regular variation of $\Pi$. 
\end{proof}

\subsection{Regularly varying  time series}
\label{sec:regvar-ts-construction}

We now introduce the notion of a regularly varying  time series. We consider a complete
separable metric space $(\setE,\dist[\setE])$ with an element $\zero \setE$ and we assume that the
metrid $\dist[\setE]$ has the homogeneity property
$\dist[\setE](\zero\setE,s\bx) = s\dist[\setE](\zero\setE,\bx)$ for all $s>0$ and $\bx\in\setE$. We
then define the pseudo norm $\pseudonorm{\bx} [\setE] = \dist[\setE](\zero\setE,\bx)$.
\begin{definition}
  \label{def:regvar-ts}
  Let $\bX=\{X_j,j\in\Zset\}$ be a time series with values in $\setE$. It is said to be regularly
  varying if $(X_s,\dots,X_t)$ is regularly varying in $\setE^{t-s+1}$ for all $s\leq t \in\Zset$.
\end{definition}
\cite{samorodnitsky:owada:2012} proved that if $\bX$ is regularly varying, then there exists a
measure $\tailmeasure$ on $\setE^\Zset$, called the tail measure of $\bX$, whose finite dimensional
projections are the exponent measures $\tailmeasure_{s,t}$ and having the properties of a tail measure as
introduced in~\Cref{def:tail-measure}. If $\bX$ is stationary, then the tail measure is shift
invariant.

Consider the metric $\dist[F]$ on $\setF=\setE^\Zset$ defined by
\begin{align}
  \dist[F](\bx,\by) = \sum_{j\in\Zset} 2^{-|j|} (\dist[\setE](\bx_j,\by_j)\wedge1) \; . \label{eq:distF}
\end{align}
It is proved in \cite[Theorem~4.1]{segers:zhao:meinguet:2017} that the regular variation of the time
series $\bX$ in the sense of \Cref{def:regvar-ts} is equivalent to the regular variation of $\bX$
seen as a random element with values in the complete separable metric space $(\setF,\dist[F])$ in
the sense of \Cref{def:regvar-metric}, \ie\ $\bX\in \mathrm{RV}(\setF,\{a_n\},\tailmeasure)$ with $a_n$ such
that $\lim_{n\to\infty}n\pr(\pseudonorm{\bX_0}[E]>a_n)=1$. Therefore, we will hereafter
indifferently say that $\bX$ is regularly varying in the sense of \Cref{def:regvar-ts} with tail
measure $\tailmeasure$ or $\bX\in \mathrm{RV}(\seqspace{\setE},\{a_n\},\tailmeasure)$.

The local tail process and spectral tail process associated to the tail measure $\tailmeasure$ can
be reinterpreted as limiting quantities for the regularly varying time series $\bX$. Their existence
also characterizes regular variation. The next result generalizes
\cite[Theoreom~2.1]{basrak:segers:2009} for a  non stationary time series.
\begin{lemma}
  \label{lem:Theta-as-limit}
  Let $\tailmeasure$ be a tail measure on $\setE^\Zset$ and for $h\in\Zset$ set $p_h=\tailmeasure(\{\pseudonorm{\bx}[E]>1\})$. For $h$ such
  that $p_h>0$, let $\bY^{(h)}$ and $\bTheta^{(h)}$ be the local tail and spectral tail  processes associated to $\tailmeasure$ as in
  \Cref{def:local-tail-process}.  The following statements are equivalent;
  \begin{enumerate}[(i)]
  \item \label{item:XRV} $\bX\in \mathrm{RV}(\seqspace{\setE},\{a_n\},\tailmeasure)$; 
  \item \label{item:tailprocess} For all $h\in\Zset$, $\lim_{n\to\infty} n\pr(\pseudonorm{\bX_h}[E]>a_n)=p_h$ and for
    all $h$ such that $p_h>0$, we have, as $u\to\infty$,
    \begin{align}
      \label{eq:tail-process-time-series-tailmeasure}
      &  \mathcal{L}\left(\bX/u \ \Big|\ \pseudonorm{\bX_h}[E]>u \right) \stackrel{d}\longrightarrow \bY^{(h)} \; ;
    \end{align}
  \item \label{item:spectraltailprocess} For all $h\in\Zset$, $\lim_{n\to\infty} n\pr(\pseudonorm{\bX_h}[E]>a_n)=p_h$ and for $h$ such that
    $p_h>0$,
    \begin{align}
      &  \mathcal{L}\left(\bX/\pseudonorm{\bX_h}[E] \ \Big|\ \pseudonorm{\bX_h}[E]>u \right) \stackrel{d}\longrightarrow \bTheta^{(h)}\;.
    \end{align}
  \end{enumerate}
  If $\bX$ is stationary, then $\bTheta^{(h)} \equallaw \shift^h \bTheta$ and $\tailmeasure$ is
  shift-invariant.
\end{lemma}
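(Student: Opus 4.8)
The plan is to prove the core equivalence \ref{item:XRV}$\,\Leftrightarrow\,$\ref{item:tailprocess} by translating the global $\mcm_0$-statement into slab-wise weak convergence, then to obtain \ref{item:tailprocess}$\,\Leftrightarrow\,$\ref{item:spectraltailprocess} through a polar decomposition, and finally to settle the stationary addendum with the mapping theorem and \Cref{cor:TCF}. First I would record the geometry of the metric $\dist[F]$: on $\{\pseudonorm{\bx_h}[E]>1\}$ one has $\dist[F](\bx,\zeroseq\setE)\geq 2^{-|h|}$, so each such slab is separated from $\zeroseq\setE$; conversely, any set separated from $\zeroseq\setE$ is contained in a finite union $\bigcup_{|h|\leq N}\{\pseudonorm{\bx_h}[E]>\epsilon\}$ of slabs. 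Using the \citet{hult:lindskog:2006} characterisation of $\mcm_0$-convergence by weak convergence of the restrictions to the ball complements $B_r^c$ (see also \citealp{kallenberg:2017}), statement \ref{item:XRV} becomes equivalent to the weak convergence, for every $h$ and $s>0$, of the finite measures $n\pr(a_n^{-1}\bX\in\cdot\,,\ \pseudonorm{\bx_h}[E]>s)$ towards $\tailmeasure(\cdot\cap\{\pseudonorm{\bx_h}[E]>s\})$. The boundary bookkeeping is handled by noting that, by homogeneity of $\tailmeasure$, the sets $\{\pseudonorm{\bx_h}[E]=s\}$ are $\tailmeasure$-null for all but countably many $s$, so the slabs are continuity sets for almost every $s$.

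For \ref{item:XRV}$\,\Rightarrow\,$\ref{item:tailprocess}, evaluating along $u=a_n$ on the slab $\{\pseudonorm{\bx_h}[E]>1\}$ gives $n\pr(\pseudonorm{\bX_h}[E]>a_n)\to\tailmeasure(\{\pseudonorm{\bx_h}[E]>1\})=p_h$ (for $h$ with $p_h=0$ this is all that \ref{item:tailprocess} requires), and dividing by $p_h$ yields the conditional weak convergence $\mathcal L(a_n^{-1}\bX\mid\pseudonorm{\bX_h}[E]>a_n)\to\bY^{(h)}$; the passage from the discrete scale $a_n$ to a continuous $u\to\infty$ is the standard interpolation using the regular variation (index $1/\alpha$) of $a_n$. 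For the converse \ref{item:tailprocess}$\,\Rightarrow\,$\ref{item:XRV} the key is to upgrade the single slab $\{\pseudonorm{\bx_h}[E]>1\}$ to every slab $\{\pseudonorm{\bx_h}[E]>s\}$: applying the continuous mapping theorem with the $1$-homogeneous functional $\bx\mapsto\pseudonorm{\bx_h}[E]$ to the convergence in \ref{item:tailprocess} produces $\pr(\pseudonorm{\bX_h}[E]>ut\mid\pseudonorm{\bX_h}[E]>u)\to t^{-\alpha}$, i.e.\ the regular variation of the modulus, which together with \ref{item:tailprocess} gives weak convergence on all slabs. Covering each $B_r^c$ by finitely many slabs then delivers weak convergence on every $B_r^c$, hence \ref{item:XRV}; the limit is identified with $\tailmeasure$ because it agrees with $\tailmeasure$ on all the slabs, which determine $\tailmeasure$ by \Cref{lem:determination-tailmeasure}.

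For \ref{item:tailprocess}$\,\Leftrightarrow\,$\ref{item:spectraltailprocess} I would exploit the decomposition $a_n^{-1}\bX=(\pseudonorm{\bX_h}[E]/u)\cdot(\bX/\pseudonorm{\bX_h}[E])$ conditionally on $\pseudonorm{\bX_h}[E]>u$. The forward direction is immediate from the continuous mapping theorem applied to the $0$-homogeneous normalisation $\bx\mapsto\bx/\pseudonorm{\bx_h}[E]$, since $\bTheta^{(h)}=\bY^{(h)}/\pseudonorm{\bY^{(h)}_h}[E]$ and $\pseudonorm{\bY^{(h)}_h}[E]\geq1$ almost surely, so the map is continuous on the support of $\bY^{(h)}$. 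For \ref{item:spectraltailprocess}$\,\Rightarrow\,$\ref{item:tailprocess} I would use the factorisation, valid because the angular event is scale invariant, $\pr(\pseudonorm{\bX_h}[E]>ut,\ \bX/\pseudonorm{\bX_h}[E]\in A\mid\pseudonorm{\bX_h}[E]>u)=\tfrac{\pr(\pseudonorm{\bX_h}[E]>ut)}{\pr(\pseudonorm{\bX_h}[E]>u)}\,\pr(\bX/\pseudonorm{\bX_h}[E]\in A\mid\pseudonorm{\bX_h}[E]>ut)$. Here the first factor tends to $t^{-\alpha}$ by the regular variation of the modulus (obtained from the tail condition in \ref{item:spectraltailprocess} and the regular variation of index $1/\alpha$ of the normalising sequence, by squeezing $u$ between consecutive $a_n$), while the second tends to $\pr(\bTheta^{(h)}\in A)$ by \ref{item:spectraltailprocess} with $u'=ut\to\infty$; the product $t^{-\alpha}\pr(\bTheta^{(h)}\in A)$ is precisely the law of an independent pair $(R,\bTheta^{(h)})$ with $R$ an $\alpha$-Pareto variable, i.e.\ the structure of $\bY^{(h)}$ recorded in \Cref{prop:localtailprocess}, and a final continuous mapping with $(r,\bx)\mapsto r\bx$ gives \ref{item:tailprocess}. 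I expect this asymptotic independence of modulus and angle, together with the promotion of slab-wise convergence to global $\mcm_0$-convergence in the converse of the core equivalence, to be the main obstacle.

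Finally, for the stationary addendum, stationarity gives $\shift^k\bX\equallaw\bX$, hence $n\pr(a_n^{-1}\shift^k\bX\in\cdot)=n\pr(a_n^{-1}\bX\in\cdot)$. Since $\shift^k$ is continuous, $1$-homogeneous and fixes $\zeroseq\setE$, the mapping theorem (\Cref{lem:transfer-homogeneous-regvar}) identifies the limit of the left-hand side as $\tailmeasure\circ\shift^{-k}$, whereas the right-hand side converges to $\tailmeasure$; uniqueness of $\mcm_0$-limits forces $\tailmeasure=\tailmeasure\circ\shift^{-k}$ for all $k$, so $\tailmeasure$ is shift-invariant. The identity $\bTheta^{(h)}\equallaw\shift^h\bTheta$ is then exactly the statement of \Cref{cor:TCF}.
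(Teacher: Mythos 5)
Your architecture matches the paper's for the forward implication and the stationary addendum: (i)$\Rightarrow$(ii) is read off from the definition of regular variation exactly as in the paper, and your mapping-theorem argument for the shift invariance of $\tailmeasure$ is the intended one. The one genuine soft spot is the converse (ii)$\Rightarrow$(i), where you pass from weak convergence on each slab $\{\pseudonorm{\bx_h}[E]>s\}$ to weak convergence on $B_r^c$ by ``covering $B_r^c$ by finitely many slabs''. Covering is not enough: the slabs overlap, and weak convergence of the restrictions to $A_s,\dots,A_t$ does not by itself yield weak convergence on $A_s\cup\dots\cup A_t$; you need either inclusion--exclusion (hence convergence on all the intersections, with the attendant continuity-set bookkeeping) or a disjointification. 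The paper's proof is built around precisely this disjointification: it decomposes $n\pr(a_n^{-1}\bX_{s,t}\in A)$ according to the \emph{first} index $h$ with $\pseudonorm{\bX_h}[E]>\epsilon a_n$, so that the events $\{\pseudonorm{\bX_h}[E]>\epsilon a_n,\ \max_{s\le i<h}\pseudonorm{\bX_i}[E]\le\epsilon a_n\}$ partition the union of slabs and each term converges to $\epsilon^{-\alpha}p_h\pr(\epsilon\bY^{(h)}_{s,t}\in A,\ \max_{s\le i<h}\pseudonorm{\bY^{(h)}_i}[E]\le1)$. Substituting that partition for your covering closes the gap, and the identification of the limit with $\tailmeasure$ then goes through \Cref{lem:determination-tailmeasure} as you say.

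On the remaining pieces you actually do more than the paper: the equivalence (ii)$\Leftrightarrow$(iii) is omitted there with a citation to Basrak--Segers, whereas you spell out the polar factorisation and the asymptotic independence of modulus and angle. One subtlety to make explicit in (iii)$\Rightarrow$(ii): condition (iii) is purely angular apart from the single-sequence statement $n\pr(\pseudonorm{\bX_h}[E]>a_n)\to p_h$, so the regular variation of $u\mapsto\pr(\pseudonorm{\bX_h}[E]>u)$ that your squeezing argument invokes has to be extracted from the regular variation of $\{a_n\}$ with index $1/\alpha$ together with monotonicity of the tail and $a_{n+1}/a_n\to1$; in the direction (ii)$\Rightarrow$(iii) the radial information comes for free from $\bY^{(h)}$, but in the reverse direction it does not, and this is where the argument would be circular if the regular variation of $\{a_n\}$ were not available a priori.
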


\begin{proof}
  We start by proving the implication \ref{item:XRV}~$\Rightarrow$~\ref{item:tailprocess}. By
  definition of regular variation, for every $h\in\Zset$ we have
  $ \lim_{n\to\infty} n\pr(\pseudonorm{\bX_h}[E] > a_n) =p_h$ and for every set $A$ depending only
  on a finite number of coordinates, we have
  \begin{align*}
    \lim_{n\to\infty}  n\pr(\bX\in A , \pseudonorm{\bX_h}[E] > a_n) = \tailmeasure(\{A\cap \{\pseudonorm{\bx_h}[E]>1\}) \; .
  \end{align*}
  By definition of the local tail process, we obtain
  \begin{align*}
    \lim_{n\to\infty}  \pr(\bX\in A \mid  \pseudonorm{\bX_h}[E] > a_n) = \frac1{p_h}\tailmeasure(\{A\cap \{\pseudonorm{\bx_h}[E]>1\}) = \pr(\bY^{(h)}\in A) \; .
  \end{align*}
  To prove the converse implication \ref{item:tailprocess}~$\Rightarrow$~\ref{item:XRV}, we first
  note that the tail measure is characterized by its finite dimensional projections. Therefore it
  suffices to prove that these projections are characterized by the tail process. Let $A$ be a set
  which depends only on the coordinates between $s$ and $t$, $s\leq t \in\Zset$, and bounded away
  from $zero$ in $\setE^{t-s+1}$. This means that there exists $\epsilon>0$ such that $\bx\in A$
  implies that $\sum_{h=s}^t \ind{\pseudonorm{\bx}[E]>\epsilon}\geq1$. Note also that if $p_h=0$, then for
  all $\epsilon>0$, $ \lim_{n\to\infty} n\pr(\pseudonorm{\bX_h}[E] > a_n\epsilon) = 0$. Thus in the following
  computations we will omit the indices $h$ such that $p_h=0$. Decomposing according to the first
  exceedence over $\epsilon a_n$, we obtain
  \begin{align*}
    \tailmeasure_{s,t}(A) 
    & = \lim_{n\to\infty} n \pr(a_n^{-1}\bX_{x,t} \in A) \\
    & = \lim_{n\to\infty} \sum_{h=s}^t n \pr(a_n^{-1}\bX_{x,t} \in A, \pseudonorm{\bX_h}[E]>\epsilon, \max_{s\leq i <h} \pseudonorm{\bX_i}[E]\leq\epsilon) \\
    & = \lim_{n\to\infty} \sum_{h=s\atop p_h>0}^t n \pr(a_n^{-1}\pseudonorm{\bX_h}[E]>a_n\epsilon) 
      \frac{\pr(a_n^{-1}\bX_{x,t} \in A, \pseudonorm{\bX_h}[E]>\epsilon, \max_{s\leq i <h} \pseudonorm{\bX_i}[E]\leq\epsilon)}
      {\pr(a_n^{-1}\pseudonorm{\bX_h}[E]>a_n\epsilon) } \\
    & =      \sum_{h=s\atop p_h>0}^t \epsilon^{-\alpha} p_h \pr\left(\epsilon \bY_{s,t}^{(h)} \in A, 
      \max_{s\leq i \leq h-1} \pseudonorm{\bY_i}[E]\leq 1\right) \; .
  \end{align*}
  This proves that the finite dimensional distributions of the tail process characterize the tail
  measure. The proof of the equivalence
  \ref{item:tailprocess}~$\Leftrightarrow$~\ref{item:spectraltailprocess} is straightforward
  generalization of the corresponding result for $\Rset^d$ valued time series in
  \cite{basrak:segers:2009} and is omitted.
\end{proof}

\begin{remark}
  \label{rem:maxstable}
  In the case $\setE = [0,\infty)$, \Cref{lem:Theta-as-limit} implies that the tail measure of
    a time series $\bX\in \mathrm{RV}(\seqspace{[0,\infty)},\{a_n\},\tailmeasure)$ is the exponent
    measure of the limiting max-stable process, see Remarks~\ref{rem:max-stable1} and~\ref{rem:max-stable2}. More precisely, let $\bX^{(i)}$, $i\geq1$, be \iid\
    copies of $\bX$. Then the regular variation of $\bX$ implies that
    \begin{align*}
      a_n^{-1}\bigvee_{i=1}^n  \bX^{(i)} \fidi  \bigvee_{i=1}^\infty \mathbf{P}^{(i)}
    \end{align*}
    where the suprema are taken componentwise and $\sum_{i=1}^\infty \delta_{\mathbf{P}^{(i)}}$ is a
    Poisson point process on $\seqspace{[0,\infty)}$ with mean measure $\tailmeasure$. This also
    shows that for a max-stable process the tail measure and the exponent measure are the same.
\end{remark}

In the sequel, given a shift-invariant tail measure, or equivalently given a spectral tail process,
we will build a time series

\subsubsection{Construction of a stationary regularly varying time series }
\label{sec:construction}
As seen in \Cref{sec:tailmeasurerepresenation}, the tail measure of a stationary regularly varying
time series with tail index $\alpha>0$ is a shift-invariant tail measure with homogeneous with index $\alpha$. A
natural question is whether any shift-invariant tail measure $\tailmeasure$ on $\setF=\seqspace\setE$ is
the tail measure of a stationary regularly varying time series $\bX$. The purpose of this section is
to prove that the answer is positive and provide one construction for such a process $\bX$.

Our intuition is guided by the case $\setE=[0,\infty)$. Then, given a tail measure $\tailmeasure$ on
$\seqspace{[0,\infty)}$, the max-stable process $\bX$ with exponent measure~$\tailmeasure$ is
regularly varying with tail measure~$\tailmeasure$. Furthermore, $\bX$ is stationary if and only if
$\tailmeasure$ is shift-invariant. This provides a straightforward solution in the non negative
case. Before we generalize it, we recall the Poisson point process representation of the max-stable
process $\bX$: if $\tailmeasure$ admits representation \eqref{eq:polar-nu-Z} with $\bZ$ a
non-negative time series, then
\[
\bX \equallaw \bigvee_{i\geq 1} \Gamma_i^{-1/\alpha} \bZ^{(i)} \; , 
\]
where $\{\Gamma_i\}_{i\geq 1}$ are the points of a homogeneous Poisson process on $[0,\infty)$ and
independently, $\bZ^{(i)}$, $i\geq 1$, are independent copies of $\bZ$ and the supremum is taken componentwise.

In the general framework where $\setE$ is a complete separable metric space and $\tailmeasure$ is a
tail measure on $\setF=\seqspace\setE$, we consider a Poisson point process
$\Pi\sim \mathrm{PPP}(\tailmeasure)$. Note $\Pi$ can be constructed as
\begin{equation}
  \label{eq:representation-Pi}
  \Pi=\left\{ \Gamma_i^{-1/\alpha}\bZ^{(i)} \; , \ i\geq 1\right\} \; . 
\end{equation}

We interpret the point process $\Pi$ as a particle system that evolves in time, the $i$-th particle
having position $\varphi^{(i)}_h=\Gamma_i^{-1/\alpha}\bZ^{(i)}_h$ at time $h$. The random process
$\varphi^{(i)}=\Gamma_i^{-1/\alpha}\bZ^{(i)}\in\setF=\seqspace\setE$ is hence the trajectory of the
$i$-th particle. We construct a time series $\bX$ that records at each time $h$ the position of the
particle which is farthest away from $\zero\setE$, which we will call the largest point. More
formally, we define
\begin{equation}
  \label{eq:def-X-argmax}
  \bX_h=\mathbf{\varphi}^{(i_h)}_h \; , \ \ \  i_h=\mathop{\mathrm{arg\,max}}_{i\geq 1} \pseudonorm{\varphi^{(i)}_h}[E] \; , \ \ \ 
  h\in\Zset \; .
\end{equation}
Provided $\pr(\pseudonorm{\bZ_h}[E]>0)>0$, there are almost surely infinitely many particles at
time $h$ with positive norm and a unique particle with the largest norm. This is because the random
variables $\Gamma_i$, $i\geq1$ have continuous distributions and
$\lim_{i\to\infty}\Gamma_i^{-1/\alpha}=0$ almost surely.  Therefore the $\mathrm{arg\,max}$
in~\eqref{eq:def-X-argmax} is unique and the random variable $i_h$ is well-defined.

\begin{theorem}
\label{theo:nu-to-X}
  Given a shift-invariant tail measure $\tailmeasure$ on $\setE^{\Zset}$, the $\setE$-valued time
  series $\bX$ defined by~\eqref{eq:def-X-argmax} is stationary and regularly varying on
  $\seqspace\setE$ with sequence $a_n=n^{1/\alpha}$ and tail measure $\tailmeasure$.
\end{theorem}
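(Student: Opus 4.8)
The plan is to realise $\bX$ as the image of the Poisson point process $\Pi\sim\mathrm{PPP}(\tailmeasure)$ under the measurable ``record'' map $\Phi:\mcn_0(\setF)\to\setF$ which sends a configuration $\pi$ to the sequence whose $h$-th coordinate is the $h$-th coordinate of the point of $\pi$ maximising $\pseudonorm{\cdot_h}[E]$; by \eqref{eq:def-X-argmax} we have exactly $\bX=\Phi(\Pi)$. First I would check that $\tailmeasure$ is boundedly finite on $\setFzero$, so that $\Pi$ is well defined and \Cref{theo:RV-N0} applies: any set separated from $\zeroseq\setE$ lies in $\{\dist[F](\zeroseq\setE,\bx)>\epsilon\}$, which, choosing $N$ with $\sum_{|j|>N}2^{-|j|}<\epsilon/2$, is covered by finitely many sets $\{\pseudonorm{\bx_j}[E]>\delta_j\}$, each of finite $\tailmeasure$-measure by the homogeneity and finiteness properties of \Cref{def:tail-measure}. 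It then remains to prove two things: stationarity and regular variation.

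For stationarity I would exploit shift-invariance of $\tailmeasure$ together with the equivariance of $\Phi$. Since the backshift $\shift$ is a bijection of $\seqspace\setE$, the property $\tailmeasure(\shift A)=\tailmeasure(A)$ is equivalent to $\shift$ leaving $\tailmeasure$ invariant as a push-forward, so $\shift\Pi$ is again $\mathrm{PPP}(\tailmeasure)$ and $\shift\Pi\equallaw\Pi$. A one-line computation gives $\Phi\circ\shift=\shift\circ\Phi$: the maximiser of $\pseudonorm{(\shift\varphi)_h}[E]=\pseudonorm{\varphi_{h-1}}[E]$ over $\varphi\in\pi$ is the maximiser at lag $h-1$, whence $\Phi(\shift\pi)_h=\Phi(\pi)_{h-1}=(\shift\Phi(\pi))_h$. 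Combining these, $\shift\bX=\shift\Phi(\Pi)=\Phi(\shift\Pi)\equallaw\Phi(\Pi)=\bX$, which is stationarity.

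For regular variation I would invoke \Cref{theo:RV-N0}: as $\tailmeasure$ is $\alpha$-homogeneous, $\Pi\in\mathrm{RV}(\mcn_0(\setF),n^{1/\alpha},\tailmeasure^\ast)$ with $\tailmeasure^\ast(B)=\int_\setF\ind{\delta_\bx\in B}\tailmeasure(\rmd\bx)$ concentrated on one-point configurations, and then transfer this through $\Phi$ by the mapping lemma \Cref{lem:transfer-homogeneous-regvar}. The map $\Phi$ is $1$-homogeneous (scaling every point by $t>0$ scales each coordinate by $t$ and leaves every argmax unchanged) and satisfies $\Phi(\nullmeasure)=\zeroseq\setE$; moreover $\Phi(\delta_\bx)=\bx$, so $\tailmeasure^\ast\circ\Phi^{-1}(A)=\int_\setF\ind{\bx\in A}\tailmeasure(\rmd\bx)=\tailmeasure(A)$, which is non-null. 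Granting the continuity hypotheses of the lemma, I would conclude $\bX=\Phi(\Pi)\in\mathrm{RV}(\setF,n^{1/\alpha},\tailmeasure)$, as required, with the scaling sequence $n^{1/\alpha}$ preserved by the mapping.

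The main obstacle will be verifying that $\Phi$ is continuous at $\nullmeasure$ and at $\tailmeasure^\ast$-almost every point. The crucial mechanism is that the $\setF$-metric controls every coordinate: from \eqref{eq:distF}, $\dist[F](\zeroseq\setE,\by)\geq 2^{-|h|}(\pseudonorm{\by_h}[E]\wedge1)$, so a point of small $\setF$-norm has small $h$-th coordinate norm for each fixed $h$. At $\nullmeasure$, convergence $\pi_n\to\nullmeasure$ forces $\largestpoint{\pi_n}[F]\to0$, hence $\sup_{\by\in\pi_n}\pseudonorm{\by_h}[E]\to0$ for every $h$ and thus $\Phi(\pi_n)\to\zeroseq\setE$. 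At a one-point limit $\delta_\bx$ with $\bx\neq\zeroseq\setE$, the convergence $\pi_n\to\delta_\bx$ in $\mcn_0(\setF)$ means $\pi_n$ has a single point $\bx^{(n)}\to\bx$ of $\setF$-norm bounded away from $0$, while the second-largest $\setF$-norm tends to $0$ (for each $\epsilon<\dist[F](\zeroseq\setE,\bx)$ eventually exactly one point lies outside $B_\epsilon$). Consequently, at each lag $h$ with $\pseudonorm{\bx_h}[E]>0$ the point $\bx^{(n)}$ eventually wins the argmax and $\Phi(\pi_n)_h=\bx^{(n)}_h\to\bx_h$, whereas at lags with $\pseudonorm{\bx_h}[E]=0$ every candidate coordinate has norm tending to $0$, so $\Phi(\pi_n)_h\to\zero\setE=\bx_h$; hence $\Phi(\pi_n)\to\bx=\Phi(\delta_\bx)$ in $\setF$. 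The only delicate point is the almost-everywhere well-definedness of the argmax, which is harmless here since $\tailmeasure^\ast$ charges only one-point configurations and, under $\mathcal{L}(\Pi)$, the argmaxes are almost surely unique.
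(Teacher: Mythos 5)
Your proposal is correct and follows essentially the same route as the paper: represent $\bX$ as the image of $\Pi\sim\mathrm{PPP}(\tailmeasure)$ under the record map, deduce stationarity from the shift-invariance of $\tailmeasure$ via the equivariance of that map, and transfer regular variation from \Cref{theo:RV-N0} through \Cref{lem:transfer-homogeneous-regvar}, identifying the limit as $\tailmeasure$ because $\delta_{\bx}\mapsto\bx$ under the map. The only (harmless) divergence is in the continuity verification, where you check continuity directly at the one-point configurations $\delta_{\bx}$ charged by the limit measure $\mu^*$, while the paper factors the map through the per-lag projections $P_h$ and the largest-point map $T$ and invokes \Cref{lem:N0sharp} and \Cref{lem:continuity-distF}.
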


\begin{proof}

  We will use the mapping \Cref{lem:transfer-homogeneous-regvar}. We consider $\setF=\seqspace\setE$
  endowed with the metric $\dist[F]$ defined in~(\ref{eq:distF}).

  Define the subset $\mcn_0^\sharp(\setE)\subset \mcn_0(\setE)$ as the set of point measures that
  have exactly one largest point and consider the map $T:\mcn_0^\sharp(\setE)\to \setE$ that
  associate to such a point measure its largest point. By \Cref{lem:N0sharp}, $\mcn_0^\sharp(\setE)$
  is open and $T$ is continuous on $\mcn_0^\sharp(\setE)$. We extend $T$ to $\mcn_0(\setE)$ by
  setting the value $\zero{\setE}$ on $\mcn_0(\setE)\setminus\mcn_0^\sharp(\setE)$.

  Given a point measure $\pi\in\mcn_0(\setF)$ and $h\in\Zset$, we define $P_h(\pi)$ as the
  restriction to $\setE\setminus\{\zero\setE\}$ of the image of $\pi$ under the projection
  $\bx\mapsto\bx_h$. More precisely, if $\pi = \sum_{i=1}^\infty \delta_{\bx^{(i)}}$ with
  $\bx^{(i)}\in\setF$, then $P_h(\pi)$ is the point measure on $\setEzero$ with points $\bx_h^{(i)}$
  such that $\bx_h^{(i)}\ne\zero\setE$.  For the particle system $\Pi=\{\varphi^{(i)},i\geq 1\}$,
  \[
  P_h\Pi=\{\varphi_h^{(i)} : i\geq 1\;,\; \varphi_h^{(i)}\neq \zero\setE\}
  \]
  records the position at time $h\in\Zset$ of the non zero particles.  Using the
  representation~(\ref{eq:representation-Pi}), we also have
  \begin{align*}
      P_h\Pi=\{\Gamma_i^{-1/\alpha}\bZ_h^{(i)} : i\geq 1\;,\; \bZ_h^{(i)}\neq \zero\setE\} \; . 
  \end{align*}
  Since $\Pi$ is Poisson, $P_h\Pi$ is a Poisson point process on $\setE\setminus\{\bszero_\setE\}$ with intensity
  \[
  \mu(B) = \tailmeasure(\{ \bx: \bx_h\in B\}) = \tailmeasure(\{ \bx: \bx_0\in B\}) \; , 
  \]
  for all Borel measurable sets $B\subset\setE\setminus\{\zero\setE\}$.  The marginal measure $\mu$
  does not depend on $h\in\Zset$ because $\tailmeasure$ is shift-invariant. Moreover,
  $P_h\Pi\in \mcn_0^\sharp(E)$ almost surely since for $i\ne j$,
  $\pr(\Gamma_i^{-1/\alpha}\pseudonorm{\bZ_h^{(i)}}[E] =
  \Gamma_j^{-1/\alpha}\pseudonorm{\bZ_h^{(j)}}[E] ) = 0$.

  We now define the map $\mct$ on $\mcm_0(\setF)$ onto $\setF$ by
  \begin{align*}
    \mct(\pi) = \{T(P_h\pi), h\in\Zset\} \; . 
  \end{align*}
  The time series $\bX$ defined in (\ref{eq:def-X-argmax}) can be reexpressed in terms of the map
  $\mct$: $\bX = \mct(\Pi)$.  The stationarity of $\bX_h$  follows from the shift-invariance of $\tailmeasure$ since
  $\shift\bX=\mct(\shift \Pi) \equallaw \mct(\Pi)$ where
  $\shift\Pi=\{B\varphi^{(i)},i\geq 1\}\equallaw \Pi$.  The regular variation of $\bX$ will be
  obtained as a consequence of \Cref{lem:transfer-homogeneous-regvar}. By construction, $\mct$ is
  1-homogeneous, $\mct(\nullmeasure) = \zero\setF$ and we will check the following properties:
  \begin{enumerate}[(a),wide=0pt]
  \item \label{item:continuousatzero} the map $\mct$ is continuous at $\nullmeasure$; 
  \item \label{item:almostsurelycontinuous} the map $\mct$ is almost surely continuous with respect to  the distribution of $\Pi$.
  \end{enumerate}

  \begin{enumerate}[-,wide=0pt]
  \item To prove that $\mct$ is continuous at $\nullmeasure$, recall that the space $\setF$ is
    endowed with the distance defined in~(\ref{eq:distF}) and note that for
    $\pi=\sum_{i=1}^\infty \delta_{\bx^{(i)}}\in\mcn_0(\setF)$,
    \begin{align*}
      \dist[F](\zero\setF,\mct(\pi))  = \sum_{h\in\Zset} 2^{-|h|} \max_{i\geq1}(\pseudonorm{\bx_h^{(i)}}[E]\wedge1)
      \leq 3 \max_{i\geq1} \dist[F](\zero\setF,\bx^{(i)}) = 3 \largestpoint\pi[F] \; . 
    \end{align*}
    On the other hand, applying~(\ref{eq:minoration-prohorov-nullmeasure}), we obtain that if
    $ \rho(\nullmeasure,\pi) < 1/4$, then
    \begin{align*}
      \dist[F](\zero\setF,\mct(\pi)) \leq  12  \rho(\nullmeasure,\pi) \; .
    \end{align*}
    This proves~\ref{item:continuousatzero}.
  \item We now prove~\ref{item:almostsurelycontinuous}. By \Cref{lem:continuity-distF}, it suffices
    to prove that the projections $T_h = T\circ P_h$ are continuous for all $h$. Since
    $\pr(P_h\Pi\in\mcn_0^\sharp(\setE))=1$, this follows from the continuity of $T$ on
    $\mcn_0^\sharp(\setE)$ which is established in \Cref{lem:N0sharp}. 
  \end{enumerate}

  To conclude the proof, there only remains to prove that the tail measure of $\bX$ is
  $\tailmeasure$. By \Cref{lem:transfer-homogeneous-regvar} and \Cref{theo:RV-N0}, the tail measure
  of $\bX$ is $\mu^*\circ \mct^{-1}$, given 
   for $A\in\setFzero$ by
  \begin{align*}
    \mu^*\circ \mct^{-1}(A) = \int_\setF \ind{\mct(\delta_{\bx})\in A} \tailmeasure(\rmd \bx) \; .
  \end{align*}
  For $\bx=\{\bx_h,h \in\Zset\}\in\setF$, we have
  $\mct(\delta_{\bx})=\{T(\delta_{\bx_h}),h\in\Zset\} = \bx$ if $\bx\ne\zero\setF$ and
  $\mct(\zero\setF)=\mct(\nullmeasure) = \zero\setF$. Thus $\mu^*\circ \mct^{-1} = \tailmeasure$.
\end{proof}

The next two proposition state some interesting elementary properties of the process $\bX$ defined
by \eqref{eq:def-X-argmax}. They are strongly related to max-stability.  Let $g:\setF\to\setF$ be
the map defined by $g(\bx) = \{\pseudonorm{\bx_h}[E],h\in\Zset\}$. 
\begin{proposition}
  \label{prop:norm-max-stable}
  Consider the process $\bX$ defined by \eqref{eq:def-X-argmax}. Then the non negative time series
  $\{\pseudonorm{\bX_h}[E],h\in\Zset\}$ is max-stable with exponent measure $\tailmeasure\circ g^{-1}$. 
\end{proposition}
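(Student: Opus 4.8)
The plan is to recognise the norm sequence $g(\bX)=\{\pseudonorm{\bX_h}[E],h\in\Zset\}$ as the coordinatewise supremum of the points of a Poisson point process with intensity $\tailmeasure\circ g^{-1}$, and then to invoke the de Haan representation recalled in \Cref{rem:max-stable1}. Two elementary properties of $g$ drive the argument. First, $g$ is $1$-homogeneous, since $\pseudonorm{u\bx_h}[E]=u\pseudonorm{\bx_h}[E]$ for $u>0$; consequently $g^{-1}(uA)=ug^{-1}(A)$ for all measurable $A$ and $u>0$. Second, $g(\bx)=\boldsymbol 0$ (the null sequence) if and only if $\bx=\zeroseq\setE$, because $\pseudonorm{\bx_h}[E]=0$ forces $\bx_h=\zero\setE$; in particular $g^{-1}(\{\boldsymbol 0\})=\{\zeroseq\setE\}$.

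I would first check that $\tailmeasureN:=\tailmeasure\circ g^{-1}$ is a tail measure on $\seqspace{[0,\infty)}$ in the sense of \Cref{def:tail-measure}. Property~\ref{item:zerozero} follows from $g^{-1}(\{\boldsymbol 0\})=\{\zeroseq\setE\}$ and $\tailmeasure(\{\zeroseq\setE\})=0$; properties~\ref{item:standardization} and~\ref{item:finipourtout-h} follow from the identity $\tailmeasureN(\{x_h>1\})=\tailmeasure(\{\pseudonorm{\bx_h}[E]>1\})$, which equals $1$ for $h=0$ and is finite for every $h$; and the homogeneity~\ref{item:homogeneity} follows from $g^{-1}(uA)=ug^{-1}(A)$, which gives $\tailmeasureN(uA)=\tailmeasure(ug^{-1}(A))=u^{-\alpha}\tailmeasureN(A)$. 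In particular $\tailmeasureN$ is $\sigma$-finite and boundedly finite away from $\boldsymbol 0$ by \Cref{lem:determination-tailmeasure}.

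It then remains to identify $g(\bX)$ as a coordinatewise supremum over a Poisson point process with intensity $\tailmeasureN$. Writing $\Pi=\{\varphi^{(i)},i\geq1\}$ with $\varphi^{(i)}=\Gamma_i^{-1/\alpha}\bZ^{(i)}$ as in \eqref{eq:representation-Pi}, the image point process $\sum_{i\geq1}\delta_{g(\varphi^{(i)})}$ is, by the mapping theorem for Poisson processes and the bounded finiteness of $\tailmeasureN$, a Poisson point process on $\seqspace{[0,\infty)}\setminus\{\boldsymbol 0\}$ with intensity $\tailmeasureN$. By the construction \eqref{eq:def-X-argmax}, $\bX_h$ is the particle of largest norm at time $h$, so that $\pseudonorm{\bX_h}[E]=\max_{i\geq1}\pseudonorm{\varphi^{(i)}_h}[E]=\bigvee_{i\geq1}g(\varphi^{(i)})_h$; this maximum is attained and almost surely positive because shift invariance forces $p_h=1$ (\Cref{cor:TCF}), so that $\pr(\pseudonorm{\bZ_h}[E]>0)>0$ and infinitely many particles have positive norm at time $h$. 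Since this holds for every $h$, $g(\bX)=\bigvee_{i\geq1}g(\varphi^{(i)})$ is the coordinatewise supremum of the points of $\sum_{i\geq1}\delta_{g(\varphi^{(i)})}$. The converse direction of \Cref{rem:max-stable1}, applied with the tail measure $\tailmeasureN$, then identifies $g(\bX)$ as an $\alpha$-Fr\'echet max-stable sequence with exponent measure $\tailmeasureN=\tailmeasure\circ g^{-1}$, which is the claim. The only delicate point is the equality $\pseudonorm{\bX_h}[E]=\max_i\pseudonorm{\varphi^{(i)}_h}[E]$: it relies on the fact that the record map $T$ returns the point of largest norm, so that the norm of the selected particle $\bX_h$ coincides with the supremum of all particle norms at time $h$, and not with the norm of some other coordinate.
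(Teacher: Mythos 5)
Your proof is correct, and for the identification of the exponent measure it takes a genuinely different route from the paper. The paper's proof is two lines: max-stability is read off from the identity $\pseudonorm{\bX_h}[E]=\sup_{i\geq1}\Gamma_i^{-1/\alpha}\pseudonorm{\bZ^{(i)}_h}[E]$, and the exponent measure is obtained by applying the regular-variation mapping theorem (\Cref{lem:transfer-homogeneous-regvar}) to the $1$-homogeneous continuous map $g$, which gives $\tailmeasure\circ g^{-1}$ as the \emph{tail} measure of $g(\bX)$, and then invoking the fact that for a max-stable process the tail measure and the exponent measure coincide (\Cref{rem:maxstable}). You instead stay entirely at the level of the Poisson particle system: you push $\Pi$ forward under $g$ to get a Poisson point process with intensity $\tailmeasure\circ g^{-1}$, check that this image measure is itself a tail measure on $\seqspace{[0,\infty)}$, and conclude by the converse direction of de Haan's representation (\Cref{rem:max-stable1}). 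Both arguments hinge on the same key identity $\pseudonorm{\bX_h}[E]=\max_{i\geq1}\pseudonorm{\varphi^{(i)}_h}[E]$, which you justify correctly (the argmax in \eqref{eq:def-X-argmax} selects the particle of largest norm, so the norm of the selected particle is the supremum of all particle norms). What your route buys is self-containedness: it avoids the $\mcm_0$-convergence machinery and the detour through regular variation, and it makes explicit the (easy but worth recording) fact that $\tailmeasure\circ g^{-1}$ satisfies the axioms of \Cref{def:tail-measure}, hence is a legitimate exponent measure --- a point the paper's proof takes for granted. What the paper's route buys is brevity and consistency with the surrounding machinery, since \Cref{lem:transfer-homogeneous-regvar} is already set up and reused elsewhere.
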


\begin{proof}
  The max stability follows from the representation
  $\pseudonorm{\bX_h}[E]=\sup_{i\geq1} \Gamma_i^{-1/\alpha} \pseudonorm{Z_i}[E]$ and the fact that
  $\tailmeasure\circ g^{-1}$ is the exponent measure is a consequence of the mapping theorem
  \Cref{lem:transfer-homogeneous-regvar}, since for a max-stable process, the tail measure and
  exponent measure are the same.
\end{proof}

In order to study further the stability property of the process $\bX$, we define the binary
operation $\odot$ defined on $\setE$ by
\[
\bx_1\odot \bx_2 =\left\{\begin{array}{ll} \bx_1 & \mbox{if $\pseudonorm{\bx_1}[E]\geq \pseudonorm{\bx_2}[E]$} \\ \bx_2 &
    \mbox{otherwise}\end{array}\right.\;,\quad \bx_1,\bx_2\in \setE \; .
\]
Note that the binary operation $\odot$ is associative, that is
$(\bx_1\odot \bx_2)\odot \bx_3=\bx_1\odot (\bx_2\odot \bx_3)$ for all $\bx_1,\bx_2,\bx_3\in \setE$. It is not
commutative since $\bx_1\odot \bx_2\neq \bx_2\odot \bx_1$ if $\bx_1$ and $\bx_2$ are distinct elements with the
same norm. However, elements with distinct norms do commute. More generally, if $\bx_1,\dots,\bx_n$ are
elements in $\setE$ such that exactly one element has maximal norm, $\bx^\ast$ say, then
$\bx_1\odot\cdots\odot \bx_n=\bx^\ast$ does not depend on the order of the $\bx_i$'s.

\begin{proposition}
  \label{prop:odot-stable}
  The process $\bX$ defined by \eqref{eq:def-X-argmax} admits the Lepage representation
  \begin{align}
    \label{eq:representation-x-odot}
    \bX \equallaw \odot_{i= 1}^\infty \Gamma_i^{-1/\alpha}\bZ^{(i)} \; ,
  \end{align}
  with $\{\Gamma_i,i\geq 1\}$ and $\{\bZ^{(i)},i\geq 1\}$ as in \eqref{eq:representation-Pi} and the
  operation $\odot$ is taken componentwise.  Furthermore, the process $\bX$ is stable with respect
  to the operation $\odot$ in the sense that,
  \begin{align}
    \label{eq:odot-stable}
    n^{-1/\alpha}\odot_{i=1}^n \bX^{(i)} \equallaw \bX \; , 
  \end{align}
  for every $n\geq1$, $\bX^{(1)},\ldots,\bX^{(n)}$ being independent copies of $\bX$.
\end{proposition}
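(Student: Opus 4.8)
The plan is to observe that the componentwise operation $\odot$ selects, in each coordinate, the particle of largest norm, so that the Lepage representation~\eqref{eq:representation-x-odot} is nothing but a rewriting of the defining formula~\eqref{eq:def-X-argmax}, and then to deduce the $\odot$-stability~\eqref{eq:odot-stable} from the superposition property of Poisson point processes, together with the $\alpha$-homogeneity of $\tailmeasure$ and the $1$-homogeneity of the map $\mct$ built in the proof of \Cref{theo:nu-to-X}.

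For the Lepage representation, I would fix $h\in\Zset$ and recall, as noted before the statement of \Cref{theo:nu-to-X}, that the projected point process recording the positions at time $h$ (denoted $P_h\Pi$ in the proof of that theorem) has almost surely finitely many points outside every ball centered at $\zero\setE$ and a unique point of maximal norm. Hence the norms $\Gamma_i^{-1/\alpha}\pseudonorm{\bZ_h^{(i)}}[E]$ reach their supremum at a single index $i_h$, the infinite product $\odot_{i\geq1}\Gamma_i^{-1/\alpha}\bZ_h^{(i)}$ stabilizes after finitely many factors and, by associativity of $\odot$ and uniqueness of the maximal norm, equals $\Gamma_{i_h}^{-1/\alpha}\bZ_h^{(i_h)}=\bX_h$ regardless of the order of the factors. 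Performing this componentwise over $h\in\Zset$ would give~\eqref{eq:representation-x-odot}.

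For the stability property, I would represent independent copies as $\bX^{(k)}=\mct(\Pi^{(k)})$ with $\Pi^{(1)},\dots,\Pi^{(n)}$ independent copies of $\Pi\sim\mathrm{PPP}(\tailmeasure)$, and set $\Pi^\ast=\sum_{k=1}^n\Pi^{(k)}$. The core step is the identity
\[
\odot_{k=1}^n \bX^{(k)} = \mct(\Pi^\ast) \; ,
\]
which holds because in each coordinate $h$ the point of largest norm of $P_h\Pi^\ast=\sum_{k=1}^nP_h\Pi^{(k)}$ is the largest of the $n$ per-process maxima $\bX^{(k)}_h$. Since $\Pi^\ast\sim\mathrm{PPP}(n\tailmeasure)$ and $\mct$ is $1$-homogeneous, I would then write
\[
n^{-1/\alpha}\odot_{k=1}^n\bX^{(k)}=\mct\big(n^{-1/\alpha}\cdot\Pi^\ast\big) \; ,
\]
and conclude by noting that the scaled process $n^{-1/\alpha}\cdot\Pi^\ast$ has mean measure $B\mapsto n\tailmeasure(n^{1/\alpha}B)=\tailmeasure(B)$, by $\alpha$-homogeneity, hence $n^{-1/\alpha}\cdot\Pi^\ast\equallaw\Pi$ and $\mct(n^{-1/\alpha}\cdot\Pi^\ast)\equallaw\mct(\Pi)=\bX$.

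The main obstacle I anticipate is purely bookkeeping: making the infinite and finite $\odot$-products unambiguous. Because $\odot$ is not commutative, both the stabilization of the infinite product and the ``maximum of maxima is the global maximum'' identity rely on the almost sure distinctness of all the norms involved in each coordinate, which guarantees a single maximal particle and renders the order and grouping of the $\odot$-factors irrelevant via associativity. Once this is carefully justified, the remaining computations — the superposition of the $\Pi^{(k)}$ and the homogeneity rescaling of the mean measure — are routine.
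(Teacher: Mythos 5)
Your proof is correct and follows essentially the same route as the paper: the Lepage representation is the identity $\odot_{i\geq1}\Gamma_i^{-1/\alpha}\bZ^{(i)}=\mct(\Pi)$ read coordinatewise, and the stability comes from the superposition of the $n$ independent Poisson processes combined with the $1$-homogeneity of $\mct$ and the $\alpha$-homogeneity of $\tailmeasure$, which makes $n^{-1/\alpha}\Pi^\ast\sim\mathrm{PPP}(\tailmeasure)$. Your additional care about the non-commutativity of $\odot$ and the almost sure uniqueness of the maximal particle in each coordinate is a welcome elaboration of what the paper leaves implicit.
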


\begin{proof}
  The representation (\ref{eq:representation-x-odot}) is simply a rewriting of the definition of the
  process $\bX$, that is
  \begin{align*}
    \odot_{i= 1}^\infty \Gamma_i^{-1/\alpha}\bZ^{(i)}  = \mct(\Pi) \; , 
  \end{align*}
  where $\Pi \sim \mathrm{PPP}(\tailmeasure)$. Let $n\geq1$, $\Pi_1,\dots,\Pi_n$ be \iid\ copies of
  $\Pi$ and  $\bX^{(1)},\ldots,\bX^{(n)}$ be independent copies of $\bX$.  Since $\mct$ is 1-homogeneous, we have 
  \begin{align*}
    n^{-1/\alpha}\odot_{i=1}^n \bX^{(i)} = \mct(n^{-1/\alpha} \Pi_1 \cup \cdots \cup n^{-1/\alpha} \Pi_n) \equallaw \mct(\Pi) \; , 
  \end{align*}
since $n^{-1/\alpha} \Pi_1 \cup \cdots \cup n^{-1/\alpha} \Pi_n \sim \mathrm{PPP}(\tailmeasure)$. 
\end{proof}

\subsection{Extremal indices and $m$-dependent approximation}\label{sec:extremal-indices}
The purpose of this section is to investigate more advanced properties of the process $\bX$ defined
by \eqref{eq:def-X-argmax} such as existence of extremal indices and $m$-dependent tail equivalent
approximations. Anti-clustering is also discussed in the next section. For the sake of generality, we
do not restrict our study to the process \eqref{eq:def-X-argmax} but rather consider a large class
of processes constructed on the Poisson particle system $\Pi\sim\mathrm{PPP}(\tailmeasure)$.

Let us first introduce the notion of extremal index that provides an insight in the dependence
structure of a stationary regularly varying time series. For a time series
$\bxi\in \mathrm{RV}_\alpha(\seqspace{[0,\infty)},(a_n),\tailmeasure)$, we compare the growth
rates of
\[
M_n=\max_{1\leq h\leq n} \xi_h \; , \quad\mbox{ and }\quad \widetilde M_n=\max_{1\leq h\leq n} \widetilde\xi_h \; , 
\]
where the random variables $\widetilde\xi_h$ are independent copies of $\xi_0$. Regular variation
and independence imply that $\widetilde{M}_n/a_n$ converges to a standard $\alpha$-Fr\'echet
distribution, that is
\begin{align*}
  \lim_{n\to\infty}\pr\left(a_n^{-1}\max_{1\leq h\leq n} \widetilde\xi_h\leq x\right) =
  \rme^{-x^{-\alpha}} \; , 
\end{align*}
for all $x>0$. Under assumptions discussed below, one can prove that
\begin{align*}
  \lim_{n\to\infty}\pr\left(a_n^{-1}\max_{1\leq h\leq n}  \xi_h\leq x\right) = \rme^{-\theta x^{-\alpha}} \; , 
\end{align*}
for $x>0$ where $\theta\in [0,1]$ is called the extremal index. If $\theta=0$, we have
$a_n^{-1}M_n\stackrel{P}\to 0$: the maximum has a slower growth rate in the dependent case. When
$\theta>0$, the maximum grows at rate $a_n$ as in the independent case. The extremal index can also
be defined as the limit, if it exists,
\begin{align}
  \label{eq:def-extermalindex}
  \theta = \lim_{n\to\infty} \log \frac{\pr\left(\max_{1\leq h\leq n} \xi_h\leq a_n\right)}{\pr(\xi\leq a_n )^n} \; .
\end{align}
In the abstract framework $\bX\in\mathrm{RV}(\seqspace\setE,(a_n),\tailmeasure)$, we consider, for
any $1$-homogeneous continuous $H:\setE\to[0,\infty)$, the extremal index (if it exists) of the non
negative time series $\{H(\bX_h),h\in\Zset\}$:
\begin{equation}
  \label{eq:theta_H}
  \theta_{H} = \lim_{n\to\infty}\log \frac{\pr\left(\max_{1\leq h\leq n} H(\bX_h)\leq a_n\right)}{\pr(H(\bX_0)\leq a_n )^n}.
\end{equation}
The homogeneity and continuity of $H$ ensure that
$H(\bX)\in\mathrm{RV}(\seqspace{[0,\infty)},\{a_n\},\tailmeasure\circ H^{-1})$, provided
$\tailmeasure\circ H^{-1}$ is not the null measure.

There exists a vast literature on the extremal index and several conditions have been introduced
that ensure the existence of a positive extremal index.  Building on
\cite{chernick:hsing:mccormick:1991} and using the tail measure through the tail process introduced
by \cite{basrak:segers:2009}, we will only consider here a condition based on $m$-dependent tail
equivalent approximations.  An $\setE$-valued time series $\bX$ is called $m$-dependent if the
$\sigma$-fields $\sigma(\bX_h,h\leq h_0)$ and $\sigma(\bX_h,h\geq h_0+m+1)$ are independent for all
$h_0\in\Zset$. In particular, a stationary $0$-dependent time series is a series of
independent and identically distributed random variables.
\begin{definition}
  \label{def:tail-equ-m-dep-approx}
  A  process $\bX$ is said to have a tail equivalent approximation if there exists
  a sequence of processes $\{\bX^{(m)}, m\geq 1\}$ such that:
    \begin{align}
      \label{eq:tail-equiv}
      \lim_{m\to\infty} \limsup_{n\to\infty} n\pr(\dist[\setE](\bX_h/a_n,\bX_h^{(m)}/a_n)>\epsilon) = 0 \; .
    \end{align}
\end{definition}

The relationship between $m$-dependent tail equivalent approximation and existence of an extremal
index is made clear in the following theorem. Since the extremal index is essentially defined for
non-negative time series, we focus on that case.
\begin{lemma}
  \label{theo:m-dep-ext-index-general}
  Let $\bX^{(m)}\in\mathrm{RV}(\seqspace{[0,\infty)},\{a_n\},\tailmeasure^{(m)})$ be stationary and
  $m$-dependent. Then, $\bX^{(m)}$ has a positive extremal index equal to the maximal index
  $\theta_{\tau_0}^{(m)}$ of $\tailmeasure^{(m)}$ associated to the map $\tau_0$ defined on
  $[0,\infty)^\Zset$ by $\tau(\bx)=x_0$.  If moreover $\bX^{(m)}$ is a tail equivalent approximation
  of a non negative time series $\bX$ and if the limit $\lim_{m\to\infty}\theta_{\tau_0}^{(m)}$
  exists, then it is the extremal index of $\bX$.
\end{lemma}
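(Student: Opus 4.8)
The plan is to prove the two assertions in turn. For the first, I would begin by showing that the tail measure $\tailmeasure^{(m)}$ of the $m$-dependent series is dissipative, so that \Cref{prop:max-indices} directly yields both $\theta_{\tau_0}^{(m)}>0$ and its probabilistic representations. The point is that $m$-dependence confines the spectral tail process to finitely many lags: since $\bX_0^{(m)}$ is independent of $(\bX_h^{(m)})_{|h|\ge m+1}$, for $|h|>m$ and every $\epsilon>0$ independence factorizes $\pr(\pseudonorm{\bX_h^{(m)}}[E]>\epsilon u,\ \pseudonorm{\bX_0^{(m)}}[E]>u)=\pr(\pseudonorm{\bX_h^{(m)}}[E]>\epsilon u)\pr(\pseudonorm{\bX_0^{(m)}}[E]>u)$, whence by \Cref{lem:Theta-as-limit}
\begin{align*}
  \pr(\pseudonorm{\bY_h}[E]>\epsilon)
  &=\lim_{u\to\infty}\frac{\pr(\pseudonorm{\bX_h^{(m)}}[E]>\epsilon u,\ \pseudonorm{\bX_0^{(m)}}[E]>u)}{\pr(\pseudonorm{\bX_0^{(m)}}[E]>u)}\\
  &=\lim_{u\to\infty}\pr(\pseudonorm{\bX_h^{(m)}}[E]>\epsilon u)=0.
\end{align*}
Thus $\bTheta_h=\bY_h/\pseudonorm{\bY_0}[E]=0$ almost surely for $|h|>m$, so $\sum_{h\in\Zset}\pseudonorm{\bTheta_h}[E]^\alpha<\infty$ almost surely, which through the $\bTheta$-formulation of \Cref{prop:dissipative} means $\tailmeasure^{(m)}$ is dissipative and hence $\theta_{\tau_0}^{(m)}>0$.

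Next I would identify the extremal index of $\bX^{(m)}$, namely \eqref{eq:theta_H} with $H$ the identity on $[0,\infty)$, with $\theta_{\tau_0}^{(m)}$. Write $B_r(u)=\pr(\max_{1\le h\le r}\bX_h^{(m)}>u)$ and $u_r=\tailmeasure^{(m)}(\{\max_{1\le h\le r}x_h>1\})$; regular variation gives $nB_r(a_n)\to u_r$ for each fixed $r$, and shift-invariance gives $u_r=\tailmeasure^{(m)}(\{\max_{0\le h\le r-1}\tau_0(\shift^h\bx)>1\})$, whence $u_r/r\to\theta_{\tau_0}^{(m)}$ by \eqref{eq:def-max-ind}. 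I would then run the classical big-block/small-block argument with fixed block length $r>m$: partition $\{1,\dots,n\}$ into $k_n=\lfloor n/r\rfloor$ consecutive blocks of length $r$ and delete the last $m$ indices of each. By $m$-dependence the $k_n$ truncated blocks (of length $r-m$, separated by gaps of exactly $m$) are independent, giving
\[
  (1-B_{r-m}(a_n))^{k_n}-(k_n m+r)\,\pr(\bX_0^{(m)}>a_n)\le\pr\Big(\max_{1\le h\le n}\bX_h^{(m)}\le a_n\Big)\le(1-B_{r-m}(a_n))^{k_n}.
\]
Letting $n\to\infty$ (so $k_nB_{r-m}(a_n)\to u_{r-m}/r$ and the error tends to $m/r$) and then $r\to\infty$ (so $u_{r-m}/r\to\theta_{\tau_0}^{(m)}$ and $m/r\to0$) sandwiches $\pr(\max_{1\le h\le n}\bX_h^{(m)}\le a_n)\to\rme^{-\theta_{\tau_0}^{(m)}}$. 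Since $n\pr(\bX_0^{(m)}>a_n)\to1$ gives $\pr(\bX_0^{(m)}\le a_n)^n\to\rme^{-1}$, the quantity in \eqref{eq:theta_H} converges to $\theta_{\tau_0}^{(m)}$. (Alternatively one invokes \cite{chernick:hsing:mccormick:1991} for existence of the extremal index and identifies it with the candidate extremal index $\pr(\sup_{i\ge1}\pseudonorm{\bY_i}[E]\le1)$, equal to $\theta_{\tau_0}^{(m)}$ by Remark~\ref{rem:candidate}.)

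For the second assertion I would compare the maxima of $\bX$ and $\bX^{(m)}$ by a two-level threshold sandwich. On $[0,\infty)$ the homogeneous metric is $\dist[\setE](x,x')=|x-x'|$, so if $\dist[\setE](\bX_h,\bX_h^{(m)})\le\epsilon a_n$ for all $1\le h\le n$ then $\max_h\bX_h^{(m)}\le a_ny(1-\epsilon)$ forces $\max_h\bX_h\le a_ny$, and $\max_h\bX_h\le a_ny$ forces $\max_h\bX_h^{(m)}\le a_ny(1+\epsilon)$. By stationarity and a union bound,
\[
  \pr\Big(\max_{1\le h\le n}\bX_h^{(m)}\le a_ny(1-\epsilon)\Big)-n\pr(\dist[\setE](\bX_0,\bX_0^{(m)})>\epsilon a_n)\le\pr\Big(\max_{1\le h\le n}\bX_h\le a_ny\Big),
\]
together with the reverse bound using $1+\epsilon$ and a $+$ error term. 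Using the extremal index of each $\bX^{(m)}$, $\pr(\max_{1\le h\le n}\bX_h^{(m)}\le a_nz)\to\exp(-\theta_{\tau_0}^{(m)}z^{-\alpha})$; taking $\limsup_n$ and $\liminf_n$, then $m\to\infty$ (where $\theta_{\tau_0}^{(m)}\to\theta$ and, by \eqref{eq:tail-equiv}, $\limsup_n n\pr(\dist[\setE](\bX_0,\bX_0^{(m)})>\epsilon a_n)\to0$), and finally $\epsilon\to0$, yields $\lim_n\pr(\max_{1\le h\le n}\bX_h\le a_ny)=\exp(-\theta y^{-\alpha})$ for all $y>0$. Since tail-equivalence also gives $n\pr(\bX_0>a_n)\to1$, this is precisely the statement that $\theta=\lim_m\theta_{\tau_0}^{(m)}$ is the extremal index of $\bX$.

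The main obstacle is the first assertion. Establishing $\bTheta_h=0$ for $|h|>m$, hence dissipativity, is clean, and positivity is then free from \Cref{prop:max-indices}; but identifying the extremal index with $\theta_{\tau_0}^{(m)}$ requires the careful bookkeeping of the block approximation, controlling the dependence at block junctions through $m$-dependence and the small-block contribution of order $m/r$, the latter disappearing only after letting the block length $r\to\infty$ following $n\to\infty$, and relying on $nB_r(a_n)\to u_r$ for each fixed $r$. By contrast the second assertion is a soft approximation argument whose only delicate point, the interchange of the limits in $n$, $m$ and $\epsilon$, is legitimized by the ordering above.
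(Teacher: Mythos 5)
Your proof is correct, but it is essentially a self-contained reconstruction of what the paper dispatches in two sentences by citation: the paper notes that an $m$-dependent sequence is strongly mixing with arbitrarily fast rate, invokes \cite[Theorem~4.5]{basrak:segers:2009} for the existence of the extremal index and its identification with the candidate extremal index $\pr(\sup_{i\geq1}Y^{(m)}_i\leq 1)$, identifies that with $\theta_{\tau_0}^{(m)}$ via \Cref{rem:candidate}, and cites \cite[Proposition~1.4]{chernick:hsing:mccormick:1991} for the passage to the limit along the tail-equivalent approximation. Your route replaces the first citation by the classical big-block/small-block computation (which is exactly what sits inside the cited theorem, made easier here because $m$-dependence gives genuine independence of the truncated blocks rather than mere mixing) and replaces the second citation by the two-threshold sandwich (again the content of the cited proposition). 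What your version buys is worth noting: \Cref{rem:candidate} only identifies the candidate extremal index with the maximal index $\theta_{\tau_0}$ \emph{for a dissipative tail measure}, and the paper's proof does not verify dissipativity of $\tailmeasure^{(m)}$; your factorization argument showing $\bTheta_h=0$ a.s.\ for $|h|>m$, hence dissipativity via \Cref{prop:dissipative} and positivity via \Cref{prop:max-indices}, supplies exactly that missing check. What the paper's version buys is brevity and robustness (the mixing route does not need the explicit block bookkeeping). One small slip in your sandwich step: from $\max_h\bX_h^{(m)}\leq a_ny(1-\epsilon)$ and $|\bX_h-\bX_h^{(m)}|\leq\epsilon a_n$ you can only conclude $\max_h\bX_h\leq a_n(y(1-\epsilon)+\epsilon)$, which is $\leq a_ny$ only when $y\geq1$; use the additive thresholds $a_n(y\mp\epsilon)$ instead, which costs nothing since the limit $\exp(-\theta y^{-\alpha})$ is continuous in $y$ (and in any case only $y=1$ is needed for the extremal index as defined in \eqref{eq:theta_H}).
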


\begin{proof}
  Since an $m$-dependent sequence is $\alpha$ mixing with arbitrary fast rate,
  the existence of the extremal index $\theta$ is proved by \cite[Theorem~4.5]{basrak:segers:2009}
  and is given by $\theta^{(m)} = \pr(\max_{i\geq 1} Y_i^{(m)}\leq1)$. Thus $\theta=\theta_{\tau_0}$
  by \Cref{rem:candidate}.  The second statement is a consequence of
  \cite[Proposition~1.4]{chernick:hsing:mccormick:1991}.
\end{proof}

Based on this result, we now prove the existence of the extremal index $\theta_H$ the process $\bX$
considered in \eqref{eq:def-X-argmax}.  The process $\bX$ is defined by means of the stationary
$\mcn_0(\setE)$-valued sequence $\bP=\{P_h(\Pi),h\in\Zset\}$ and the map $T$ introduced in the proof
of \Cref{theo:nu-to-X} but the specific form of $T$ is irrelevant and only $1$-homogeneity and
continuity are needed. Therefore we will first prove that the stationary sequence~$\bP$ admits an
$m$-dependent tail equivalent approximation and then obtain the extremal index of time series
derived from~$\bP$.
\begin{theorem}
  \label{theo:m-dep-app}
  Let $\tailmeasure$ be a tail measure on $\seqspace\setE$ and $\Pi\sim\mathrm{PPP}(\tailmeasure)$
  be the associated particle process. Consider the stationary $\mcn_0(\setE)$-valued process
  $\bP=\{P_h(\Pi), h\in \Zset\}$.  If $\tailmeasure$ has a dissipative representation
  \eqref{eq:diss-representation}, then $\bP$ has an $m$-dependent tail-equivalent approximation.
\end{theorem}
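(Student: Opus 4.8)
The plan is to manufacture the approximating processes by truncating the driving sequence $\tbZ$ of the dissipative representation to a finite window and then reading off the resulting particle system. Starting from the moving shift representation~\eqref{eq:diss-representation}, the particle system $\Pi\sim\mathrm{PPP}(\tailmeasure)$ admits the representation $\Pi=\{U_i\shift^{T_i}\tbZ^{(i)},\,i\geq1\}$, where $\sum_i\delta_{(U_i,T_i)}$ is a Poisson point process on $(0,\infty)\times\Zset$ with intensity $\alpha u^{-\alpha-1}\rmd u$ times the counting measure on $\Zset$ and the $\tbZ^{(i)}$ are independent copies of $\tbZ$; a direct mean-measure computation shows this process has intensity $\tailmeasure$. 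Fixing $\ell=\lfloor m/2\rfloor$, I would define the truncation $\tbZ^{(m)}_k=\tbZ_k\ind{|k|\leq\ell}$, let $\Pi^{(m)}=\{U_i\shift^{T_i}\tbZ^{(m),(i)},\,i\geq1\}$ be the truncated particle system coupled to $\Pi$ through the same triples $(U_i,T_i,\tbZ^{(i)})$, and set $\bP^{(m)}=\{P_h(\Pi^{(m)}),\,h\in\Zset\}$. Since the base intensity is invariant under $T\mapsto T+1$, both $\Pi^{(m)}$ and $\bP^{(m)}$ are stationary.

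First I would check that $\bP^{(m)}$ is $m$-dependent. The $i$-th truncated particle has $h$-th coordinate $U_i\tbZ^{(i)}_{h-T_i}\ind{|h-T_i|\leq\ell}$, hence it contributes to $P_h(\Pi^{(m)})$ only for $h\in\{T_i-\ell,\dots,T_i+\ell\}$. Consequently $\sigma(P_h(\Pi^{(m)}):h\leq h_0)$ depends only on the particles with $T_i\leq h_0+\ell$, while $\sigma(P_h(\Pi^{(m)}):h\geq h_0+m+1)$ depends only on the particles with $T_i\geq h_0+m+1-\ell$. Because $2\ell\leq m$, these two sets of centers are disjoint, so the two families of particles are governed by the restrictions of the Poisson process to disjoint regions of $(0,\infty)\times\Zset$ and are therefore independent. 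This yields $m$-dependence for every $h_0\in\Zset$.

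It then remains to establish the tail-equivalence in the metric $\rho$ on $\mcn_0(\setE)$, for which I would exploit the coupling; by stationarity it suffices to treat $h=0$. At coordinate $0$ the two particle systems differ only through the particles with $|T_i|>\ell$, so $P_0(\Pi^{(m)})$ is obtained from $P_0(\Pi)$ by deleting the points of $\Delta=\{U_i\tbZ^{(i)}_{-T_i}:|T_i|>\ell,\ \tbZ^{(i)}_{-T_i}\neq\zero\setE\}$. Since $P_0(\Pi^{(m)})\leq P_0(\Pi)$ as measures and these agree on $B_r^c$ as soon as $r>\largestpoint{\Delta}[E]/a_n$, the bound~\eqref{eq:majoration-prohorov-nullmeasure} applied to the scaled measures gives $\rho(P_0(\Pi)/a_n,P_0(\Pi^{(m)})/a_n)\leq\largestpoint{\Delta}[E]/a_n$. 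Hence $\{\rho(\bP_0/a_n,\bP^{(m)}_0/a_n)>\epsilon\}\subseteq\{\largestpoint{\Delta}[E]>\epsilon a_n\}$, the latter being the event that the deleted points, which form a Poisson process on $\setEzero$, contain an element of norm exceeding $\epsilon a_n$. The mean of this Poisson process on $\{\pseudonorm{\bx}[E]>\epsilon a_n\}$ equals $(\epsilon a_n)^{-\alpha}\sum_{|s|>\ell}\esp[\pseudonorm{\tbZ_s}[E]^\alpha]$, so with $a_n=n^{1/\alpha}$,
\[
n\,\pr\bigl(\rho(\bP_0/a_n,\bP^{(m)}_0/a_n)>\epsilon\bigr)\leq \epsilon^{-\alpha}\sum_{|s|>\ell}\esp[\pseudonorm{\tbZ_s}[E]^\alpha]
\]
uniformly in $n$. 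Letting $n\to\infty$ and then $m\to\infty$, the summability $\sum_{s\in\Zset}\esp[\pseudonorm{\tbZ_s}[E]^\alpha]=1$ from~\eqref{eq:cond-tildeZ} forces the right-hand side to $0$, which is precisely the tail-equivalence~\eqref{eq:tail-equiv} for $\bP$.

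I expect the only delicate step to be the metric estimate $\rho(P_0(\Pi)/a_n,P_0(\Pi^{(m)})/a_n)\leq\largestpoint{\Delta}[E]/a_n$: one must argue that deleting points from a boundedly finite point measure perturbs it, in the $\rho$ metric, by no more than its largest deleted point, using that for point measures the Prohorov distance $\rho_r$ vanishes once the two measures coincide outside $B_r$ and invoking~\eqref{eq:majoration-prohorov-nullmeasure}. Everything else—the Poisson independence delivering $m$-dependence, and the mean-measure computation delivering the $\epsilon^{-\alpha}$ bound—is routine once the representation $\Pi=\{U_i\shift^{T_i}\tbZ^{(i)}\}$ is in place.
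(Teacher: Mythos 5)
Your proposal is correct and follows essentially the same route as the paper: represent $\Pi$ as a marked Poisson process $\{U_i\shift^{T_i}\tbZ^{(i)}\}$ via the moving shift representation, truncate each particle to a finite window around its centre $T_i$ to produce the approximation, and control $\rho(a_n^{-1}\bP_0,a_n^{-1}\bP_0^{(m)})$ by the norm of the largest deleted point, whose tail is summable by \eqref{eq:cond-tildeZ}. Your two refinements — taking the half-window $\ell=\lfloor m/2\rfloor$ so that the approximation is exactly $m$-dependent, and bounding the probability of a large deleted point by the exact mean measure of the induced Poisson process rather than by a union bound over the Fr\'echet variables $\Gamma_i^{-1/\alpha}$ — are minor but clean improvements on the published argument.
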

\begin{proof}
  Note first that $\Pi$ can be expressed as
    \begin{align}
      \label{eq:representation-pi}
      \Pi = \sum_{i\geq1} \delta_{\Gamma_i^{-1/\alpha} \shift^{T_i} \tbZ^{(i)}}  \; , 
    \end{align}
    where $\delta_{\Gamma_i^{-1/\alpha}}$ is a Poisson point process on $(0,\infty)$ with mean
    measure $\nu_\alpha$, $\tbZ^{(i)}$ are \iid\ copies of the process $\tbZ$
    in~(\ref{eq:diss-representation}), $\shift$ is the shift operator and
    $\sum_{i=1}^\infty \delta_{T_i}$ is a Poisson point process on $\Zset$ with mean measure the
    counting measure on $\Zset$, independent of everything else. Indeed, it suffices to check that
    the mean measure of the point process on the right hand side of~(\ref{eq:representation-pi}) is
    $\tailmeasure$. This follows from (\ref{eq:diss-representation}).

 We now define the $m$-dependent approximation $\bP^{(m)}$ of $\bP$. For $m\geq1$, define
    \begin{align*}
      \bP_h^{(m)} & = \sum_{i\geq1} \delta_{\Gamma_i^{-1/\alpha} \ \tbZ_{h-T_i}^{(i)}} \ind{|h-T_i|\leq m} \; . 
    \end{align*}
 We must now check the tail equivalence condition (\ref{eq:tail-equiv}). By stationarity, it
    suffices to check it for $h=0$. That is we must prove that for all $\epsilon>0$, 
    \begin{align}
      \label{eq:tail-equiv-bP0}
      \lim_{m\to\infty} \limsup_{n\to\infty} n\pr(\rho(a_n^{-1}\bP_0,a_n^{-1}\bP_0^{(m)})>\epsilon) = 0  \; .
    \end{align}
    Set $R_m = \bigvee_{i=1}^\infty  \Gamma_i^{-1/\alpha} \ \pseudonorm{\tbZ_{-T_i}^{(i)}}[E]\ind{|T_i|>m}$. For
    $r>a_n^{-1}R_m$, $a_n^{-1}\bP_0^{(m)}$ and $a_n^{-1} \bP_0$ have the same points on $B_r^c$. Therefore
    \begin{align*}
      \rho(a_n^{-1}\bP_0,a_n^{-1}\bP_0^{(m)}) = \int_0^{a_nR_m} (\rho_r(a_n^{-1}\bP_0,a_n^{-1}\bP_0^{(m)})\wedge1) \rme^{-r} \rmd r \leq a_n R_m \; . 
    \end{align*}
    Thus~(\ref{eq:tail-equiv-bP0}) will be obtained  as a consequence of
    \begin{align}
      \label{eq:tail-equiv-Rm}
      \lim_{m\to\infty} \limsup_{n\to\infty} n\pr(R_m>a_n\epsilon) = 0  \; .
    \end{align}
 To prove (\ref{eq:tail-equiv-Rm}), note that for non negative random variables $Z_i,i\geq1$,
    since $\Gamma_1^{-1/\alpha}$ has a Fr\'echet distribution, we have
    \begin{align*}
      \pr\left(\bigvee_{i=1}^\infty \Gamma_i^{-1/\alpha} Z_i>x \right) 
      & \leq \sum_{i=1}^\infty \pr\left(\Gamma_i^{-1/\alpha} Z_i>x \right) \\
      & \leq \sum_{i=1}^\infty \pr\left(\Gamma_1^{-1/\alpha} Z_i>x \right)  = \sum_{i=1}^\infty (1-\rme^{-x^{-\alpha}\esp[Z_i^\alpha]}) \; .
    \end{align*}
    Therefore, if $\sum_{i=1}^\infty \esp[Z_i^\alpha]<\infty$, we obtain by dominated convergence 
    \begin{align*}
      \limsup_{x\to\infty} x^\alpha \pr\left(\bigvee_{i=1}^\infty \Gamma_i^{-1/\alpha} Z_i>x \right) \leq \sum_{i=1}^\infty \esp[Z_i^\alpha] \; . 
    \end{align*}
    Applying this bound to $R_{m}$, we obtain by dominated convergence theorem 
    \begin{align*}
      \lim_{m\to\infty}  \limsup_{n\to\infty} n\pr(R_m>a_n\epsilon) \leq       
      \lim_{m\to\infty} \sum_{i=1}^\infty \esp[\pseudonorm{\bZ_0}[E]^\alpha\ind{|T_i|>m}] = 0 \; .
    \end{align*}
    This proves~(\ref{eq:tail-equiv-Rm}).
\end{proof}

To a function $H:\mcn_0(\setE)\to[0,\inftyà)$ we associate the function
$\widehat{H}: \seqspace{\setE}\to \seqspace{[0,\infty)}$ defined by
$\widehat{H}(\bx)=\{H(\delta_{\bx_h}),h\in\Zset\}$, $\bx\in\seqspace\setE$.

\begin{corollary}
  \label{theo:ext-index-particlesystem}
  Under the assumptions of \Cref{theo:m-dep-app}, let $H:\mcn_0(\setE)\to [0,\infty)$ be a
  Lipschitz continuous $1$-homogeneous function such that $\tailmeasure(\{\hat{H}(\bx)>1\})=1$.
  Then the time series $\bX_H=\{H\circ P_h(\Pi),h\in\Zset\}$ is in
  $\mathrm{RV}(\seqspace{[0,\infty)},\{n^{1/\alpha}\},\tailmeasure_H)$ with 
  $\tailmeasure_H = \tailmeasure\circ \widehat H^{-1}$, has an $m$-dependent tail
  equivalent approximation and an extremal index equal to the maximal index $\theta_{\tau}$
  associated to $\tailmeasure$ and the map $\tau$ defined on $\seqspace\setE$ by
  $\tau(\bx) = H(\delta_{\bx_0})$.
\end{corollary}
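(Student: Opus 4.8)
The plan is to realise $\bX_H$ as the image of the Poisson particle system $\Pi\sim\mathrm{PPP}(\tailmeasure)$ under a single $1$-homogeneous map, read off its regular variation and tail measure from the mapping theorem, then build a finite-dependent approximation by applying $H$ to the approximation of $\bP$ supplied by \Cref{theo:m-dep-app}, and finally obtain the extremal index as a limit of maximal indices. Concretely, I would introduce $\mct_H:\mcn_0(\setF)\to\seqspace{[0,\infty)}$ by $\mct_H(\pi)=\{H(P_h\pi),h\in\Zset\}$, so that $\bX_H=\mct_H(\Pi)$. Since $H$ is $1$-homogeneous and $H(\nullmeasure)=0$ (from $H(\nullmeasure)=H(t\cdot\nullmeasure)=tH(\nullmeasure)$ for all $t>0$), the map $\mct_H$ is $1$-homogeneous with $\mct_H(\nullmeasure)=\zero\setF$ and $\mct_H(\delta_\bx)=\{H(\delta_{\bx_h}),h\in\Zset\}=\widehat H(\bx)$, the case $\bx_h=\zero\setE$ being covered by $H(\nullmeasure)=0$. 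Continuity of $\mct_H$ at $\nullmeasure$ follows exactly as in the proof of \Cref{theo:nu-to-X}, using the Lipschitz property of $H$ and \eqref{eq:minoration-prohorov-nullmeasure}, and $\Pi$-almost sure continuity follows from continuity of $H$ and of the projections $P_h$. Then \Cref{theo:RV-N0} gives $\Pi\in\mathrm{RV}(\mcn_0(\setF),\{n^{1/\alpha}\},\mu^\ast)$, and \Cref{lem:transfer-homogeneous-regvar} yields $\bX_H\in\mathrm{RV}(\seqspace{[0,\infty)},\{n^{1/\alpha}\},\mu^\ast\circ\mct_H^{-1})$. Since $\mu^\ast$ is carried by the one-point measures $\delta_\bx$ with law $\tailmeasure$, the identity $\mct_H(\delta_\bx)=\widehat H(\bx)$ gives $\mu^\ast\circ\mct_H^{-1}=\tailmeasure\circ\widehat H^{-1}=\tailmeasure_H$, and $\tailmeasure_H(\{x_0>1\})=\tailmeasure(\{H(\delta_{\bx_0})>1\})=1$ confirms that $\tailmeasure_H$ is a genuine standardised tail measure.

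Starting from the $m$-dependent tail-equivalent approximation $\bP^{(m)}$ of $\bP$ given by \Cref{theo:m-dep-app}, I would set $\bX_H^{(m)}=\{H(\bP_h^{(m)}),h\in\Zset\}$. Being a coordinatewise function of $\bP^{(m)}$, the series $\bX_H^{(m)}$ inherits finite dependence. Tail equivalence transfers from $\bP^{(m)}$ to $\bX_H^{(m)}$: with $L$ the Lipschitz constant of $H$ and using $1$-homogeneity, $|a_n^{-1}H(\bP_0)-a_n^{-1}H(\bP_0^{(m)})|=|H(a_n^{-1}\bP_0)-H(a_n^{-1}\bP_0^{(m)})|\le L\rho(a_n^{-1}\bP_0,a_n^{-1}\bP_0^{(m)})$, so that $n\pr(|a_n^{-1}H(\bP_0)-a_n^{-1}H(\bP_0^{(m)})|>\epsilon)\le n\pr(\rho(a_n^{-1}\bP_0,a_n^{-1}\bP_0^{(m)})>\epsilon/L)$, which has the required double limit by \eqref{eq:tail-equiv-bP0}.

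For the extremal index I would exploit the dissipative structure. Writing $\widetilde{\boldsymbol W}=\widehat H(\tbZ)$, the relation $\widehat H(r\shift^h\tbZ)=r\shift^h\widetilde{\boldsymbol W}$ turns \eqref{eq:diss-representation} into a moving shift representation of $\tailmeasure_H$ driven by $\widetilde{\boldsymbol W}$, while $\bX_H^{(m)}$ is governed by the truncation $\widetilde{\boldsymbol W}^{(m)}=\{\widetilde{\boldsymbol W}_h\ind{|h|\le m}\}$. Each $\bX_H^{(m)}$ is stationary, finite-dependent and regularly varying, so by \Cref{theo:m-dep-ext-index-general} its extremal index is the maximal index of its tail measure associated with $\tau_0(\bx)=x_0$; by \Cref{prop:max-indices} this equals $\esp[\sup_h\tau_0^\alpha(\shift^h\widetilde{\boldsymbol W}^{(m)})]=\esp[\max_{|h|\le m}\widetilde{\boldsymbol W}_h^\alpha]$, normalised by the truncated mass $\sum_{|h|\le m}\esp[\widetilde{\boldsymbol W}_h^\alpha]$ since the truncation is not standardised. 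Letting $m\to\infty$, monotone convergence gives $\esp[\max_{|h|\le m}\widetilde{\boldsymbol W}_h^\alpha]\uparrow\esp[\sup_h\widetilde{\boldsymbol W}_h^\alpha]$ and $\sum_{|h|\le m}\esp[\widetilde{\boldsymbol W}_h^\alpha]\uparrow1$, so the extremal indices of the approximations converge to $\esp[\sup_h\widetilde{\boldsymbol W}_h^\alpha]$. By \Cref{prop:max-indices} applied to $\tailmeasure$ with $\tau(\bx)=H(\delta_{\bx_0})$, and using $\tau(\shift^h\tbZ)=\widetilde{\boldsymbol W}_{-h}$, this limit is exactly $\theta_\tau$; the second part of \Cref{theo:m-dep-ext-index-general} then identifies $\theta_\tau$ as the extremal index of $\bX_H$.

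The regular-variation and tail-equivalence steps are essentially a replay of the arguments behind \Cref{theo:nu-to-X} and \Cref{theo:m-dep-app} with $T$ replaced by $H$, so I expect the main obstacle to be the extremal-index step: one must carefully reconcile the non-standardised truncated tail measures $\tailmeasure_H^{(m)}$ with the standardised limit $\tailmeasure_H$, tracking the normalising masses $\sum_{|h|\le m}\esp[\widetilde{\boldsymbol W}_h^\alpha]\to1$ so that the extremal indices of the approximations genuinely converge to $\theta_\tau=\esp[\sup_h\widetilde{\boldsymbol W}_h^\alpha]$, and checking that this coincides with the maximal index of $\tailmeasure$ associated with $\tau(\bx)=H(\delta_{\bx_0})$.
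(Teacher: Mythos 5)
Your proposal is correct and follows essentially the same route as the paper: apply $H$ to the $m$-dependent approximation $\bP^{(m)}$ of \Cref{theo:m-dep-app}, transfer tail equivalence via the Lipschitz property of $H$, compute the extremal index of each approximation through \Cref{theo:m-dep-ext-index-general}, and pass to the limit $m\to\infty$. You are in fact somewhat more explicit than the paper's own (rather terse) proof, notably in identifying the limit tail measure as $\mu^\ast\circ\mct_H^{-1}=\tailmeasure\circ\widehat H^{-1}$ and in tracking the normalising masses $\sum_{|h|\le m}\esp[\widetilde{\boldsymbol W}_h^\alpha]\uparrow 1$ when reconciling the truncated extremal indices with $\theta_\tau$.
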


\begin{proof}
  We will apply \Cref{theo:m-dep-ext-index-general,theo:m-dep-app}. Let $\bP^{(m)}$ be the
  $m$-dependent approximation of $\bP$ defined in the proof of \Cref{theo:m-dep-app}. Then the time
  series $\bX^{(m)}$ defined by $\bX_h^{(m)} = H\circ P_h(\Pi)$, $h\in\Zset$ is $m$ dependent and
  regularly varying by \Cref{lem:transfer-homogeneous-regvar}.  By \Cref{theo:m-dep-ext-index-general},  its extremal index $\theta^{(m)}$ is
  given by
  \begin{align*}
    \theta^{(m)} = \frac{\esp\left[ \max_{|h|\leq m} \bar{H}^\alpha(\widetilde{\bZ}_h)\right]} 
    {\esp\left[\sum_{|h|\leq m} \bar{H}^\alpha(\widetilde{\bZ}_h)\right]} \; . 
  \end{align*}
The tail equivalence condition (\ref{eq:tail-equiv}) holds by the Lipschitz property of $H$. Thus
  the sequence $\{\bX^{(m)}\}$ is a tail equivalent approximation of $\bX$ and we can apply
  \Cref{theo:m-dep-ext-index-general} which proves (by application of the dominated convergence
  theorem) that the extremal index of $\bX$ is
  \begin{align*}
    \theta = \lim_{m\to\infty} \theta^{(m)} = \frac{\esp\left[ \max_{h\in\Zset} \bar{H}^\alpha(\widetilde{\bZ}_h)\right]} 
    {\esp\left[\sum_{h\in\Zset} \bar{H}^\alpha(\widetilde{\bZ}_h)\right]} \; .
  \end{align*}
\end{proof}

\subsection{The anti-clustering condition}
In the literature of time series and extremal index, the anti-clustering condition introduced by \cite{davis:hsing:1995} plays quite an
important role, see e.g.,  \cite{janssen2018eigenvalues}, \cite{MR3547739}, \cite{MR3535966}, \cite{MR3025708}. Let us first define the notion of anti-clustering for a stationary
regularly varying sequence.

\begin{definition}
  A stationary time series $\bX\in\mathrm{RV}(\seqspace{\setE},(a_n),\tailmeasure)$ satisfies the
  anti-clusering condition if there exists an intermediate sequence $r_n\to\infty$, $r_n/n\to 0$,
  such that
  \begin{equation}
    \label{eq:AC-abstract}
    \lim_{m\to\infty} \limsup_{n\to\infty} \pr\left( \max_{m\leq |h|\leq r_n} \dist[\setE](\bX_t/a_n,\zero\setE)>u  
      \mid  \dist[\setE](\bX_0/a_n,\zero\setE)>u \right) = 0 \; . 
  \end{equation}
\end{definition}
When $\setE=\Rset$, we retrieve the classical anti-clustering condition of
\cite[Condition~2.8]{davis:hsing:1995}. 
Although we have not used anti-clustering in our analysis of extremal ind
(\Cref{theo:ext-index-particlesystem}), we show below that, for the class of processes
considered, anti-clustering is equivalent to the existence of a dissipative representation for
$\tailmeasure$. This suggests that assuming the existence of a dissipative representation for
$\tailmeasure$ in \Cref{theo:ext-index-particlesystem} is not a too strong condition.
\begin{theorem}
  \label{theo:anticlustering}
  The following statements are equivalent:
  \begin{enumerate}[(i)]
  \item \label{item:dissipative2} $\tailmeasure$ has a dissipative representation \eqref{eq:diss-representation};
  \item \label{item:PI} the process $(P_h(\Pi))_{h\in\Zset}$ satisfies the anti-clustering condition in
    $(\mcn_0(\setE))^\Zset$ in $[0,\infty)^\Zset$;
  \item \label{item:H} for all $H$ as in \Cref{theo:ext-index-particlesystem}, $\bX_H$ satisfies the
    anti-clustering condition in $[0,\infty)^\Zset$;
  \item \label{item:maxstab} the max-stable process $\{\largestpoint{P_h(\Pi)}[\setE],h\in\Zset\}$ satisfies the
    anti-clustering condition $[0,\infty)^\Zset$.
\end{enumerate}
\end{theorem}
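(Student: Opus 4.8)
The plan is to reduce all four statements to the single structural property that $\tailmeasure$ be carried by the shift-invariant cone $\{\bx:\lim_{|h|\to\infty}\pseudonorm{\bx_h}[E]=0\}$, which by the equivalence between dissipativity and support on this set in \Cref{prop:dissipative} is exactly statement~\ref{item:dissipative2}. The bridge to anti-clustering is an explicit exceedance asymptotic for the max-stable process of norms $N=\{N_h\}$, $N_h=\largestpoint{P_h(\Pi)}[\setE]=\pseudonorm{\bX_h}[E]$: by \Cref{prop:norm-max-stable}, $N$ is max-stable with exponent measure $\mu_N=\tailmeasure\circ g^{-1}$ and regularly varying with tail measure $\mu_N$ and $a_n=n^{1/\alpha}$. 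Since statement~\ref{item:maxstab} is by definition the anti-clustering condition for $N$, the heart of the matter is the equivalence \ref{item:dissipative2}~$\Leftrightarrow$~\ref{item:maxstab}.

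For that equivalence I would compute the conditional exceedance probabilities of $N$ from its exponent measure. Fixing $v>0$ and setting $A_n=\{N_0>a_nv\}$, $B_n=\{\max_{k\le|h|\le r_n}N_h>a_nv\}$, max-stability and $\alpha$-homogeneity give $\pr(\max_{h\in C}N_h\le a_nv)=\exp(-n^{-1}\mu_N(\{\max_{h\in C}w_h>v\}))$ for every finite $C$ and every $w\in\seqspace{[0,\infty)}$. As $r_n/n\to0$ forces every exponent to be $o(1)$, an inclusion–exclusion for $A_n\cup B_n$ together with $\mathrm{e}^{-x}=1-x+O(x^2)$ yields
\begin{align*}
  \pr(A_n\cap B_n)=n^{-1}\mu_N\big(\{w_0>v,\ \max_{k\le|h|\le r_n}w_h>v\}\big)+O(n^{-2}r_n^2),\qquad \pr(A_n)=n^{-1}v^{-\alpha}+O(n^{-2}),
\end{align*}
the remainder being controlled by $\mu_N(\{\max_{k\le|h|\le r_n}w_h>v\})=O(r_n)$. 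Choosing $r_n=\lfloor n^{1/3}\rfloor$ kills the remainder and gives $\lim_n\pr(B_n\mid A_n)=v^{\alpha}\mu_N(\{w_0>v,\ \sup_{|h|\ge k}w_h>v\})$, which tends to $v^{\alpha}\mu_N(\{w_0>v,\ \limsup_{|h|}w_h\ge v\})$ as $k\to\infty$; under \ref{item:dissipative2} this vanishes, proving \ref{item:maxstab}. Conversely, for any admissible $r_n$ I would keep only a finite window, $\pr(A_n\cap B_n)\ge\pr(N_0>a_nv,\max_{k\le|h|\le M}N_h>a_nv)$, so finite-dimensional regular variation gives $\liminf_n\pr(B_n\mid A_n)\ge v^{\alpha}\mu_N(\{w_0>v,\max_{k\le|h|\le M}w_h>v\})$; letting $M\to\infty$, then $k\to\infty$, and invoking \ref{item:maxstab} forces $\mu_N(\{w_0>v,\limsup_{|h|}w_h\ge v\})=0$. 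Translating through $\mu_N=\tailmeasure\circ g^{-1}$, letting $v\downarrow0$ along $1/j$ and using shift-invariance of $\tailmeasure$ and of $\limsup_{|h|}\pseudonorm{\bx_h}[E]$, I obtain $\tailmeasure(\{\limsup_{|h|}\pseudonorm{\bx_h}[E]>0\})=0$, hence \ref{item:dissipative2}.

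It remains to tie \ref{item:PI} and \ref{item:H} to this core, and the key is domination by $N$. From \eqref{eq:majoration-prohorov-nullmeasure}, $\rho(\nullmeasure,P_h(\Pi)/a_n)\le N_h/a_n$, while for $H$ as in \Cref{theo:ext-index-particlesystem}, Lipschitz continuity and $1$-homogeneity give $H(P_h(\Pi))\le L\,N_h$. In both cases the joint exceedance event at level $a_nu$ is contained in the joint exceedance event for $N$ at a proportional level, so, dividing by $\pr(\rho(\nullmeasure,P_0(\Pi)/a_n)>u)$ resp.\ $\pr(H(P_0(\Pi))>a_nu)$ whose $n$-scaled limits are positive (the latter equals $u^{-\alpha}$ by the normalisation $\tailmeasure(\{\widehat H(\bx)>1\})=1$), the upper bound established for $N$ transfers verbatim with the same $r_n=\lfloor n^{1/3}\rfloor$. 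This proves \ref{item:dissipative2}~$\Rightarrow$~\ref{item:PI} and \ref{item:dissipative2}~$\Rightarrow$~\ref{item:H}. For the converse I close the cycle through \ref{item:maxstab}: taking $H(\pi)=\largestpoint{\pi}[\setE]$, which is Lipschitz, $1$-homogeneous and satisfies the normalisation, yields $\bX_H=N$, so \ref{item:H}~$\Rightarrow$~\ref{item:maxstab}; and the two-sided bound $\tfrac12(N_h/a_n\wedge1)\le\rho(\nullmeasure,P_h(\Pi)/a_n)\le N_h/a_n$ from \eqref{eq:minoration-prohorov-nullmeasure}–\eqref{eq:majoration-prohorov-nullmeasure} shows the anti-clustering of $\bP$ at a threshold $u<\tfrac12$ and of $N$ at thresholds $u$ and $2u$ are sandwiched, so \ref{item:PI}~$\Rightarrow$~\ref{item:maxstab} by homogeneity.

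The main obstacle I expect is controlling the inclusion–exclusion error over the growing window in the upper bound: the crude bound on $\mu_N(\{\max_{k\le|h|\le r_n}w_h>v\})$ is linear in $r_n$, so one must exploit the freedom to choose the intermediate sequence (here $r_n=\lfloor n^{1/3}\rfloor$, making $n^{-1}r_n^2\to0$) to render the second-order term negligible while still letting $r_n\to\infty$ to recover the full tail $\sup_{|h|\ge k}$. The delicate bookkeeping is the order of limits $n\to\infty$ then $k\to\infty$, combined with the monotone passages $M\to\infty$ in the lower bound and $v\downarrow0$ in the support transfer; each is a monotone or dominated-convergence step, but they must be arranged so that anti-clustering, quantified over a single admissible $r_n$, matches the $r_n$-independent conclusion about $\tailmeasure$.
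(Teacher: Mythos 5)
Your proposal is correct, but it takes a genuinely different route from the paper on the central point. The paper's proof is essentially a reduction plus a citation: the equivalence \ref{item:dissipative2}~$\Leftrightarrow$~\ref{item:maxstab} is obtained by invoking \cite[Theorem~2.1]{debicki:hashorva:2016} on anti-clustering for stationary max-stable processes, and the remaining implications are dispatched by the Lipschitz property of $H$, the observation that $\largestpoint{\cdot}[\setE]$ is an admissible $H$, and the sandwich bounds \eqref{eq:majoration-prohorov-nullmeasure}--\eqref{eq:minoration-prohorov-nullmeasure}. You instead give a self-contained proof of \ref{item:dissipative2}~$\Leftrightarrow$~\ref{item:maxstab}: using \Cref{prop:norm-max-stable} you compute the joint exceedance probabilities of the norm process from its exponent measure $\tailmeasure\circ g^{-1}$ by inclusion--exclusion, control the second-order term by choosing $r_n=\lfloor n^{1/3}\rfloor$ so that $n^{-1}r_n^2\to0$, and identify the double limit with $v^\alpha\,\tailmeasure(\{\pseudonorm{\bx_0}[E]>v,\ \limsup_{|h|\to\infty}\pseudonorm{\bx_h}[E]\ge v\})$, which vanishes for all $v>0$ exactly when $\tailmeasure$ is carried by $\{\lim_{|h|\to\infty}\pseudonorm{\bx_h}[E]=0\}$, i.e.\ exactly under \ref{item:dissipative2} by \Cref{prop:dissipative}; the converse direction via a finite window $M$ and the $v\downarrow0$ passage using shift-invariance is also sound (one small point: the forward direction only verifies anti-clustering along the specific sequence $r_n=\lfloor n^{1/3}\rfloor$, which is all the definition requires since it only asks for existence of one intermediate sequence). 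Your implication graph also differs slightly (you prove \ref{item:dissipative2}~$\Rightarrow$~\ref{item:PI} and \ref{item:dissipative2}~$\Rightarrow$~\ref{item:H} by domination of the relevant exceedances by those of the norm process, then close the cycle through \ref{item:maxstab}, whereas the paper goes \ref{item:PI}~$\Rightarrow$~\ref{item:H}~$\Rightarrow$~\ref{item:maxstab}~$\Rightarrow$~\ref{item:PI}), but both graphs yield the full equivalence. What your approach buys is independence from the unpublished reference, at the cost of the explicit bookkeeping of the inclusion--exclusion error and the order of the limits $n\to\infty$, $M\to\infty$, $m\to\infty$, $v\downarrow0$, which you correctly identify and arrange.
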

\begin{proof}
Since the process $\{\largestpoint{P_h(\Pi)}[\setE],h\in\Zset\}$ is max-stable, the equivalence
  between~\ref{item:dissipative2} and~\ref{item:maxstab} is proved in
  \cite[Theorem~2.1]{debicki:hashorva:2016}. The implication \ref{item:PI}~$\Rightarrow$~\ref{item:H} is a consequence of the Lipshitz property
  of $H$; the implication \ref{item:H}~$\Rightarrow$~\ref{item:maxstab} is trivial since the map
  $\largestpoint{\cdot}[\setE]$ satisfies the condition of
  \Cref{theo:ext-index-particlesystem}. Conversely, \ref{item:maxstab} implies \ref{item:PI} since
  $ \frac{1}2 (\largestpoint{P_h(\Pi)}[\setE]\wedge1) \leq
  \dist[\mcn_0(\setE)](a_n^{-1}P_h(\Pi),\nullmeasure)\leq\largestpoint{P_h(\Pi)}[\setE]$
  by~(\ref{eq:majoration-prohorov-nullmeasure}) and~(\ref{eq:minoration-prohorov-nullmeasure}). 
\end{proof}

\appendix
\section{Convergence in $\mcm_0(\mcn_0(\setF))$}

For $\mu \in \mcm_0(\mcn_0(\setF))$, we denote by $\mathcal{B}_\mu$ the set of Borel sets
$B\subset \mcn_0(\setF)$ that are bounded away from zero and such that $\mu(\pi(\partial B)>0)=0$,
with $\partial B$ the boundary of $B$.

\begin{theorem}
  \label{theo:Laplace}
  Let $\mu,\mu_1,\mu_2,\ldots \in \mcm_0(\mcn_0(\setF))$.  The following statements are equivalent:
  \begin{enumerate}[(i)]
  \item \label{item:convmzero} $\mu_n\convmzero{\mcn_0(\setF)}\mu$ as $n\to\infty$.
  \item \label{item:convfidi} $\mu_n\stackrel{fidi}\longrightarrow\mu$ as $n\to\infty$, in the sense that
    \[
    \mu_n(\pi(A_i)=m_i,\ 1\leq i\leq k) \to \mu(\pi(A_i)=m_i,\ 1\leq i\leq k)\quad \mbox{as
      $n\to\infty$}
    \]
    for all $k\geq 1$, $(m_1,\ldots,m_k)\in \mathbb{N}^k\setminus\{0\}$ and
    $A_1,\ldots,A_k\in \mathcal{B}_\mu$.
  \item \label{item:convlaplace} for all bounded continuous $f:\setF\to [0,\infty)$ vanishing on a
    neighborhood of $\zero\setF$,
    \[
    \int_{\mcn_0(\setF)}\left(1-\rme^{-\pi(f)}\right)\mu_n(\rmd \pi) \longrightarrow
    \int_{\mcn_0(\setF)}\left(1-\rme^{-\pi(f)}\right)\mu(\rmd \pi)\quad \mbox{as $n\to\infty$}
    \]
    with $\pi(f)=\int_\setF f(x)\pi(\rmd x)$.
  \end{enumerate}
\end{theorem}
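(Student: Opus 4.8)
The plan is to prove the cyclic chain (i) $\Rightarrow$ (iii) $\Rightarrow$ (ii) $\Rightarrow$ (i), localising throughout to regions where the measures are finite. Applying the characterisation of $\mcm_0$-convergence by weak convergence on complements of balls \citep{hult:lindskog:2006} to the metric space $\mcn_0(\setF)$ with distinguished point $\nullmeasure$, statement (i) is equivalent to the weak convergence of the \emph{finite} restricted measures $\mu_n$ and $\mu$ to $\{\pi:\rho(\nullmeasure,\pi)>r\}$ for all but countably many $r>0$. By the two-sided bounds \eqref{eq:majoration-prohorov-nullmeasure} and \eqref{eq:minoration-prohorov-nullmeasure}, each such region is sandwiched between sets of the form $\{\largestpoint{\pi}[F]>\delta\}$, so all arguments may be carried out on regions where every configuration is forced to carry a point far from $\zero\setF$ and where $\mu$ has finite mass.

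For (i) $\Rightarrow$ (iii) I would fix $f$ bounded continuous vanishing on a ball $B_\delta$ around $\zero\setF$ and observe that $G_f(\pi):=1-\rme^{-\pi(f)}$ is bounded, continuous on $\mcn_0(\setF)$ (since $\pi\mapsto\pi(f)$ is continuous for such $f$), and supported in $\{\largestpoint{\pi}[F]>\delta\}$, because $\pi(f)=0$ as soon as $\pi$ has no point outside $B_\delta$. Choosing a good radius $r$ with $\{\largestpoint{\pi}[F]>\delta\}\subset\{\rho(\nullmeasure,\pi)>r\}$, the localised weak convergence of the previous paragraph integrates the bounded continuous $G_f$ and yields $\mu_n(G_f)\to\mu(G_f)$, which is exactly (iii).

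The heart of the argument is (iii) $\Rightarrow$ (ii). Since $f$ vanishes on $B_\delta$, the functional $G_f$ sees only the points of $\pi$ outside $B_\delta$, so the convergence in (iii) is convergence of Laplace-type functionals of the restrictions to $\{\largestpoint{\pi}[F]>\delta\}$. By the classical Laplace-functional characterisation of weak convergence for finite point-process laws \citep{kallenberg:2017,daley:vere-jones:bookvolI}, convergence of these functionals over all admissible $f$ is equivalent to weak convergence of the restricted laws, which in turn forces convergence of the joint counts $(\pi(A_1),\dots,\pi(A_k))$ at continuity sets. Concretely I would recover $\mu_n(\pi(A_i)=m_i,\ 1\le i\le k)$ by sandwiching each $\indic{A_i}$ between bounded continuous functions vanishing near $\zero\setF$, forming test functions $f=\sum_i s_i\indic{A_i}$, and inverting the resulting generating functional by finite differences; the hypothesis $A_i\in\mathcal{B}_\mu$ ensures $\mu(\{\pi(\partial A_i)>0\})=0$, so the approximants converge $\mu$-a.e.\ and the upper and lower limits coincide. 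For (ii) $\Rightarrow$ (i) I would invoke that the fidi sets $\{\pi:\pi(A_i)=m_i,\ 1\le i\le k\}$, with the $A_i$ ranging over a ring of $\mu$-continuity Borel sets separated from $\zero\setF$ and $(m_1,\dots,m_k)\ne 0$, form a convergence-determining class \citep{kallenberg:2017}: such sets are separated from $\nullmeasure$ (one $m_i\ge 1$ forces a point in $A_i$, hence $\largestpoint{\pi}[F]>\epsilon$) and have $\mu$-null boundary in $\mcn_0(\setF)$ precisely because $A_i\in\mathcal{B}_\mu$; combined with the localisation to $\{\rho(\nullmeasure,\pi)>r\}$, fidi convergence upgrades to weak convergence of the restrictions and hence to (i).

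The main obstacle is the inversion step inside (iii) $\Rightarrow$ (ii), where two points need care because $\mu_n$ and $\mu$ are only boundedly finite, not finite. First, one must bound $\limsup_n\mu_n(\largestpoint{\pi}[F]>\delta)$ uniformly, which I would obtain by testing (iii) against continuous approximants of $c\,\indic{B_\delta^c}$ and letting $c\to\infty$; this tightness-type estimate is what allows the generating-functional inversion to converge. Second, the passage from continuous test functions to indicators of $\mathcal{B}_\mu$-sets is exactly what the null-boundary condition defining $\mathcal{B}_\mu$ is designed to license. Once the measures are localised to a region where they are finite, the remaining equivalences are classical and may be quoted from \cite{kallenberg:2017}.
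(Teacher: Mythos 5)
Your proposal is correct and follows essentially the same route as the paper: localisation to the finite restrictions on complements of balls via the Hult--Lindskog characterisation and the sandwich $B_{\mcn_0(\setF),r}^c\subset\{\pi(B_{\setF,r}^c)>0\}\subset B_{\mcn_0(\setF),r/4}^c$, reduction to the classical fidi/Laplace-functional characterisation of weak convergence of finite point-process laws, and the key total-mass/tightness estimate extracted from (iii) by testing against large multiples of continuous approximants of indicators. The only difference is that you traverse the cycle in the opposite order ((i)$\Rightarrow$(iii)$\Rightarrow$(ii)$\Rightarrow$(i) rather than (i)$\Rightarrow$(ii)$\Rightarrow$(iii) and (iii)$\Rightarrow$(i)), which is immaterial.
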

This theorem is   similar to the characterization of weak convergence of probablity measure on
$\mcn_0(\setF)$ in terms of their finite dimensional distributions and Laplace functional by
\cite[Theorem 3.10 and Corollary 3.11]{zhao:2016}.  We consider here $M_0$-convergence of measures
with possibly infinite total mass, so that we exclude in~\ref{item:convfidi} the event
$\{\pi(A_i)=0,\ 1\leq i\leq k\}$ that may have infinite mass and we use in~\ref{item:convlaplace} a
modified Laplace transform with $1-\rme^{-\pi(f)}$ instead of $\rme^{-\pi(f)}$ so as to ensure that
the integrals are finite.

\begin{proof}
  We begin with some notation and preliminaries that will be used throughout the proofs below. We
  denote by $B_{\setF,r}^c$ (resp.  $B_{\mcn_0(\setF),r}^c$) the complement of the ball with center
  $0$ and radius $r>0$ in $\setF$
  (resp. $\mcn_0(\setF)$). The bounds~(\ref{eq:majoration-prohorov-nullmeasure})
  and~(\ref{eq:minoration-prohorov-nullmeasure}), imply that  for $r\leq 1$, 
  \begin{equation}
    \label{eq:inclusion}
    B_{\mcn_0(\setF),r}^c \subset \{\pi(B_{\setF,r}^c)>0\} \subset B_{\mcn_0(\setF),r/4}^c.
  \end{equation}

  Let $\mu\in\mcm(\mcn_0(\setF))$ be fixed and consider a sequence $r_i\downarrow 0$ such that
  $\mu(\partial B_{\mcn_0(\setF),r_i}^c)=\mu(\pi(\partial B_{\setF,r_i}^c)>0)=0$ for all $i\geq
  1$.
  By  \cite[Theorem~2.2]{hult:lindskog:2006}, the $M_0$-convergence
  $\mu_n\stackrel{M_0}\longrightarrow \mu$ is equivalent to the weak convergence
  $\mu_n^{(r_i)}\stackrel{w}\longrightarrow \mu^{(r_i)}$ for all $i\geq 1$, where $\mu_n^{(r)}$
  (resp. $\mu^{(r)}$) denotes the restriction of $\mu_n$ (resp. $\mu$) to
  $B_{\mcn_0(\setF),r}^c$. By the inclusion \eqref{eq:inclusion}, this is also equivalent to the
  weak convergence $\tilde\mu_n^{(r_i)}\stackrel{w}\longrightarrow \tilde\mu^{(r_i)}$ for all
  $i\geq 1$, where $\tilde\mu_n^{(r)}$ (resp. $\tilde\mu^{(r)}$) denotes the restriction of $\mu_n$
  (resp. $\mu$) to $\{\pi(B_{\setF,r_i}^c)>0\}$. The restriction $\tilde\mu_n^{(r_i)}$ will be
  useful because they behave well with respect to finite dimensional distributions.

  The weak convergence $\mu_n^{(r_i)}\stackrel{w}\longrightarrow \mu^{(r_i)}$ (or
  $\tilde\mu_n^{(r_i)}\stackrel{w}\longrightarrow \tilde\mu^{(r_i)}$) of finite measures can be
  characterized as in \cite[Theorem 3.10 and Corollary 3.11]{zhao:2016} by the weak convergence of
  finite dimensional distributions or pointwise convergence of Laplace functionals. Note that the
  result and proof in \cite{zhao:2016} are given for weak convergence of probability measures only,
  but they are easily extended to finite measures since weak convergence is then equivalent to
  convergence of the total mass together with weak convergence of the normalized measures.

  \textit{Proof of \ref{item:convmzero}~$\Rightarrow$~\ref{item:convfidi}.} From the
  preliminary discussion, the $M_0$-convergence $\mu_n\stackrel{M_0}\longrightarrow \mu$ implies,
  for all $i\geq 1$, the convergence of
  $\mu_n(B_{\mcn_0(\setF),r_i}^c)\to \mu(B_{\mcn_0(\setF),r_i}^c)$ and the weak convergence of the
  finite dimensional distributions $\mu_n^{(r_i)}\stackrel{fidi}\longrightarrow \mu^{(r_i)}$.  This
  entails the convergence of finite dimensional distributions in the sense of $ii)$ because any set
  $A\in\mathcal{B}_\mu$ is bounded away from zero and hence in $B_{\mcn_0(\setF),r_i}^c$ for $r_i$
  small enough,
  
  \textit{Proof of \ref{item:convfidi}~$\Rightarrow$~\ref{item:convlaplace}.} It is enough to prove
  that \ref{item:convfidi} implies weak convergence of the finite dimensional distributions
  $\tilde\mu_n^{(r_i)}\stackrel{fidi}\longrightarrow \tilde\mu^{(r_i)}$ for all $r_i\geq 1$. Let
  $k\geq 1$, $A_1,\ldots,A_k\in\mathcal{B}_\mu$ and $m_1,\ldots,m_k\geq 0$. Setting
  $A_0=B_{\setF,r_i}^c\in \mathcal{B}_\mu$, we have
  \begin{equation}
    \label{eq:fidi1}
    \tilde\mu_n^{(r_i)}(\pi(A_j)=m_j,\ 1\leq j\leq k)=\tilde\mu_n(\pi(A_0)>0,\ \pi(A_j)=m_j,\ 1\leq j\leq k)
  \end{equation}
  and \ref{item:convfidi} implies convergence to
  \begin{equation}
    \label{eq:fidi2}
    \tilde\mu^{(r_i)}(\pi(A_j)=m_j,\ 1\leq j\leq k)=\tilde\mu(\pi(A_0)>0,\ \pi(A_j)=m_j,\ 1\leq j\leq k).
  \end{equation}
  This proves $\tilde\mu_n^{(r_i)}\stackrel{fidi}\longrightarrow \tilde\mu^{(r_i)}$ and
  $\mu_n\stackrel{M_0}\longrightarrow \mu$.

  \medskip \textit{Proof of \ref{item:convlaplace}~$\Rightarrow$~\ref{item:convmzero}.} We prove
  that \ref{item:convlaplace} implies that, for all $i\geq 1$, the measures $\tilde\mu_n^{(r_i)}$,
  $\tilde\mu^{(r_i)}$ have finite total mass and converge weakly
  $\tilde\mu_n^{(r_i)}\stackrel{w}\longrightarrow \tilde\mu^{(r_i)}$ as $n\to\infty$.  We first
  prove convergence of the total mass
\begin{equation}
  \label{eq:fidi4}
  \tilde\mu_n^{(r_i)}(\mcn_0(\setF))=\mu_n(\pi(B_{\setF,r_i}^c)>0)\to \tilde\mu^{(r_i)}(\mcn_0(\setF))=\mu(\pi(B_{\setF,r_i}^c)>0).
\end{equation}
Consider approximating functions $h_l^+(x)\downarrow \ind{x\in \mathrm{cl}B_{\setF,r_i}^c}$ and
$h_l^-(x)\uparrow \ind{x\in \mathrm{int}B_{\setF,r_i}^c}$ that are continuous with values in $[0,1]$
and vanish on a neighborhood of $0_\setF$.  The notation $\mathrm{cl}$ and $\mathrm{int}$ stands for
the closure and interior of the set respectively.

\[
\int_{\mcn_0(\setF)} \left(1-\rme^{-t\pi(h_l^-)}\right)\mu_n(\rmd\pi) \leq \int_{\mcn_0(\setF)}
\left(1-\rme^{-t\pi(B_{\setF,r_i}^c)}\right)\mu_n(\rmd\pi) \leq \int_{\mcn_0(\setF)}
\left(1-\rme^{-t\pi(h_l^+)}\right)\mu_n(\rmd\pi).
\]
The left and right hand sides in the previous inequalities converge and hence are bounded uniformly
in $n\geq 1$. Furthermore, since $\pi(B_{\setF,r_i}^c)$ takes values in $\{0,1,2,\ldots\}$, the
quantity
\[
\int_{\mcn_0(\setF)} \left(1-\rme^{-t\pi(B_{\setF,r_i}^c)}\right)\mu_n(\rmd\pi)=\sum_{m\geq 1} (1-\rme^{-tm})\mu_n\left(\pi(B_{\setF,r_i}^c)=m\right)
\]
satisfies
\[
(1-\rme^{-t})\mu_n\left(\pi(B_{\setF,r_i}^c)>0\right)\leq \int_{\mcn_0(\setF)}
\left(1-\rme^{-t\pi(B_{\setF,r_i}^c)}\right)\mu_n(\rmd\pi)\leq
\mu_n\left(\pi(B_{\setF,r_i}^c)>0\right).
\]
We deduce $\sup_{n\geq 1} \mu_n\left(\pi(B_{\setF,r_i}^c)>0\right)<\infty$ and this holds for all
$i\geq 1$.  Then, letting $n\to\infty$ in
\[
\int_{\mcn_0(\setF)} \left(1-\rme^{-t\pi(h_l^-)}\right)\mu_n(\rmd\pi)\leq
\mu_n\left(\pi(B_{\setF,r_i}^c)>0\right) \leq
(1-\rme^{-t})^{-1}\int_{\mcn_0(\setF)}\left(1-\rme^{-t\pi(h_l^+)}\right)\mu_n(\rmd\pi),
\] 
we get
\begin{align*}
  & \liminf_{n\to\infty} \mu_n(\pi(B_{\setF,r_i}^c)>0)\geq \int_{\mcn_0(\setF)}\left(1-\rme^{-t\pi(h_l^-)}\right)\mu(\rmd\pi), \\
  & \limsup_{n\to\infty} \mu_n(\pi(B_{\setF,r_i}^c)>0)\leq(1-\rme^{-t})^{-1} \int_{\mcn_0(\setF)}\left(1-\rme^{-t\pi(h_l^+)}\right)\mu(\rmd\pi).
\end{align*}
Letting $l\to\infty$ and $t\to\infty$, monotone convergence  entails that the right hand side in the
last two inequalities converge to $\mu(\pi(\mathrm{cl}B_{\setF,r_i}^c)>0)$ and
$\mu(\pi(\mathrm{int}B_{\setF,r_i}^c)>0)$ respectively. These two quantities are equal because we
have chose $r_i$ such that $B_{\setF,r_i}^c\in\mathcal{B}_\mu$, that is
$\mu(\pi(\partial B_{\setF,r_i}^c)>0)=0$. Consequently
$\mu_n(\pi(B_{\setF,r_i}^c)>0)\to \mu(\pi(B_{\setF,r_i}^c)>0)$ as $n\to\infty$, proving
\eqref{eq:fidi4}.

To prove that  $\tilde\mu_n^{(r_i)}\stackrel{w}\longrightarrow \tilde\mu^{(r_i)}$
using the Laplace functional, it is enough to prove
\[
\lim_{n\to \infty}\int_{\mcn_0(\setF)} \rme^{-\pi(f)} \tilde\mu_n^{(r_i)}(\rmd\pi)= \int_{\mcn_0(\setF)} \rme^{-\pi(f)} \tilde\mu{(r_i)}(\rmd\pi)
\]
for all bounded continuous $f:\setF\to [0,\infty)$ vanishing on a neighborhood of $\zero\setF$. In view
of equation \eqref{eq:fidi4}, this is equivalent to
\[
\lim_{n\to \infty} \int_{\mcn_0(\setF)}\left(1- \rme^{-\pi(f)}\right) \tilde\mu_n^{(r_i)}(\rmd\pi)=
\int_{\mcn_0(\setF)} \left(1- \rme^{-\pi(f)}\right) \tilde\mu{(r_i)}(\rmd\pi).
\]
The proof is similar to that of Equation \eqref{eq:fidi4} where the measures $\mu_n(\rmd\pi)$ and
$\mu(\rmd\pi)$ are replaced throughout the proof by $\left(1-\rme^{-\pi(f)}\right)\mu_n(\rmd\pi)$
and $\left(1-\rme^{-\pi(f)}\right)\mu(\rmd\pi)$ respectively. Details are left to the reader for the
sake of brevity. It is useful to note that
\[
\left(1-\rme^{-t\pi(h^\pm_l)}\right)\Big(1-\rme^{-\pi(f)}\Big)=\left(1-\rme^{-\pi(th^\pm_l)}\right)
+\Big(1-\rme^{-\pi(f)}\Big)-\left(1-\rme^{-\pi(f+th^\pm_l)}\right)
\]
so that \ref{item:convlaplace} allows to deal with the limit of the integrals as $n\to\infty$.
\end{proof}

\section{Lemmas for the proof of Theorem 3.7}
The Prohorov distance $\varrho_\setE$ between two bounded measures $\mu,\nu$ on a Borel space $\setE$ is defined by
\cite[Section~A2.5]{daley:vere-jones:bookvolI}
\begin{align}
  \label{eq:def-prohorov-mesures-finies}
  \varrho_\setE(\mu,\nu) = \inf\{\epsilon\geq0: \mu(A) \leq \nu(A^\epsilon) + \epsilon \; , \  \nu(A) \leq \mu(A^\epsilon) + \epsilon 
  \mbox{ for all closed sets $A$}\}
\end{align}

\begin{lemma}
  \label{lem:prohorov-point-measures}
  Let $\mu,\nu$ be two point measures on a metric space $(\setE,d)$. Then $\varrho_{\setE}(\mu,\nu)\geq
  |\mu(\setE)-\nu(\setE)|$. If $\bx,\by\in \setE$, then $\varrho_{\setE}(\delta_{\bx},\delta_{\by}) \leq d(\bx,\by)\wedge1$.
\end{lemma}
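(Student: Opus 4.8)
The plan is to work directly from the definition \eqref{eq:def-prohorov-mesures-finies} of the Prohorov distance and to verify the two bounds by testing that definition on carefully chosen closed sets. Call $\epsilon\geq0$ \emph{admissible} for a pair $(\mu,\nu)$ if $\mu(A)\leq\nu(A^\epsilon)+\epsilon$ and $\nu(A)\leq\mu(A^\epsilon)+\epsilon$ for all closed sets $A$, so that $\varrho_\setE(\mu,\nu)$ is the infimum of the admissible $\epsilon$.

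For the lower bound I would simply apply the two defining conditions to the closed set $A=\setE$. Since $\setE^\epsilon=\setE$, any admissible $\epsilon$ must satisfy $\mu(\setE)\leq\nu(\setE)+\epsilon$ and $\nu(\setE)\leq\mu(\setE)+\epsilon$, whence $|\mu(\setE)-\nu(\setE)|\leq\epsilon$. Taking the infimum over admissible $\epsilon$ gives $\varrho_\setE(\mu,\nu)\geq|\mu(\setE)-\nu(\setE)|$. I note in passing that this step uses only that $\mu,\nu$ are finite measures and that $\setE$ is closed, so the point-measure hypothesis is not actually needed here.

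For the upper bound I would show that every $\epsilon>d(\bx,\by)$ is admissible for $(\delta_\bx,\delta_\by)$, and that $\epsilon=1$ is admissible as well. Fix such an $\epsilon$ and a closed set $A$. To check $\delta_\bx(A)\leq\delta_\by(A^\epsilon)+\epsilon$, distinguish two cases: if $\bx\notin A$ then $\delta_\bx(A)=0$ and the inequality is trivial; if $\bx\in A$ then $d(\by,A)\leq d(\by,\bx)<\epsilon$, so $\by\in A^\epsilon$ and $\delta_\bx(A)=1=\delta_\by(A^\epsilon)\leq\delta_\by(A^\epsilon)+\epsilon$. The symmetric inequality follows by exchanging the roles of $\bx$ and $\by$. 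Letting $\epsilon\downarrow d(\bx,\by)$ yields $\varrho_\setE(\delta_\bx,\delta_\by)\leq d(\bx,\by)$. Independently, for $\epsilon=1$ and any closed $A$ one has $\delta_\bx(A)\leq1=\epsilon\leq\delta_\by(A^\epsilon)+\epsilon$ (and symmetrically), because both Dirac masses have total mass $1$; hence $\varrho_\setE(\delta_\bx,\delta_\by)\leq1$. Combining the two bounds gives $\varrho_\setE(\delta_\bx,\delta_\by)\leq d(\bx,\by)\wedge1$.

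There is no serious obstacle in this argument, which is essentially a direct verification. The only point requiring a little care is the convention for the enlargement $A^\epsilon$: to avoid any ambiguity at the boundary (the case where $\bx$ is the nearest point of $A$ to $\by$ and $d(\by,A)=d(\bx,\by)$), I prefer to argue with the strict inequality $\epsilon>d(\bx,\by)$ and then pass to the limit, rather than to test the value $\epsilon=d(\bx,\by)$ itself. With either convention the conclusion is the same.
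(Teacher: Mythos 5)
Your proof is correct. Both halves ultimately test the defining inequalities of \eqref{eq:def-prohorov-mesures-finies} on well-chosen closed sets, but your route differs from the paper's in both parts. For the lower bound, the paper takes $A$ to be the (finite) set of atoms of $\mu$, so that $\mu(A)=\mu(\setE)=n$ while $\nu(A^\epsilon)\leq\nu(\setE)=k$; your choice $A=\setE$ reaches the same inequality $|\mu(\setE)-\nu(\setE)|\leq\epsilon$ more directly, and, as you observe, without using the point-measure hypothesis at all --- a genuine (if minor) gain in generality and economy, since the paper's construction of the atom set is really only used to bound $\nu(A^\epsilon)$ by the total mass anyway. For the upper bound, the paper simply cites \cite[Section~11.3~p.394]{dudley:2002}, whereas you give a short self-contained verification that every $\epsilon>d(\bx,\by)$ and $\epsilon=1$ are admissible; your care with the strict inequality $\epsilon>d(\bx,\by)$ (to sidestep the boundary convention for $A^\epsilon$) is exactly the right precaution. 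In short: same spirit, but your argument is more elementary and self-contained than the paper's, at the cost of a few extra lines for the second claim.
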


\begin{proof}
  Assume for instance that $\mu(\setE)=n$ and $\nu(\setE)=k$ with $k<n$. Let $\bx_1,\dots,\bx_n$ be the points
  of $\mu$ and let $A = \{\bx_1,\dots,\bx_n\}$. Then $A$ is closed, $\mu(A)=n$ and for all $\epsilon>0$,
  $\nu(A^\epsilon) \leq k$. This proves that $\varrho_\setE(\mu,\nu)\geq n-k$. The second statement is in
  \cite[Section~11.3~p.394]{dudley:2002}.
\end{proof}

Recall from \Cref{sec:construction} the definition of the set $\mcn_0^\sharp(\setE)$ and the map $T$
and the definition of the metric $\rho$
in~(\ref{eq:def-dist-rho}). 

\begin{lemma}
  \label{lem:N0sharp}
  The subset $\mcn_0^\sharp(\setE)$ is open in $\mcn_0(\setE)$ and the map $T:\mcn_0(\setE)\to\setE$
  is continuous on $\mcn_0^\sharp(\setE)$.
\end{lemma}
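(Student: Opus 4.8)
The plan is to reduce both assertions to the fact, recorded just after the definition of $\mcn_0(\setE)$, that $\mcm_0$-convergence forces convergence of the underlying points. Concretely, by \cite[Theorem~2.2]{hult:lindskog:2006} (equivalently, by the restriction characterization displayed above), $\pi_n\convmzero\setE\pi$ holds if and only if $\pi_n{\mid_{B_r^c}}\stackrel{w}\longrightarrow\pi{\mid_{B_r^c}}$ for all but countably many radii $r>0$. Since $(\mcn_0(\setE),\rho)$ is a metric space, it suffices to argue sequentially throughout, and I will show that any sequence $\pi_n\convmzero\setE\pi$ with $\pi\in\mcn_0^\sharp(\setE)$ eventually lies in $\mcn_0^\sharp(\setE)$ and satisfies $T(\pi_n)\to T(\pi)$.

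First I would isolate a \emph{gap} at the top of a measure $\pi\in\mcn_0^\sharp(\setE)$. Let $\bx_1=T(\pi)$ be its unique largest point and $R=\pseudonorm{\bx_1}[E]=\largestpoint{\pi}[E]$. Because $\pi$ is boundedly finite, for any $r_0\in(0,R)$ the set $\{\bx\in\pi:\pseudonorm{\bx}[E]>r_0\}$ is finite; since $\bx_1$ is the \emph{unique} point attaining the norm $R$, every other point of $\pi$ has norm at most some $R_2<R$. Fix a continuity radius $r\in(R_2,R)$ for $\pi$ (all but countably many $r$ qualify, and $\pi$ carries no point with norm in $(R_2,R)$, so no point sits on the sphere of radius $r$). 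Then $\pi{\mid_{B_r^c}}=\delta_{\bx_1}$, a point measure of total mass one.

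Next, for $\pi_n\convmzero\setE\pi$ I would pass to the restrictions: $\pi_n{\mid_{B_r^c}}\stackrel{w}\longrightarrow\delta_{\bx_1}$. Weak convergence of these \emph{finite} point measures is equivalent to convergence of their Prohorov distance to zero, and the lower bound $\varrho_\setE(\mu,\nu)\geq|\mu(\setE)-\nu(\setE)|$ of \Cref{lem:prohorov-point-measures} forces $\pi_n(B_r^c)=1$ for all large $n$, say $\pi_n{\mid_{B_r^c}}=\delta_{\bx_1^{(n)}}$; the complementary bound $\varrho_\setE(\delta_{\bx_1^{(n)}},\delta_{\bx_1})\leq \pseudonorm{}[]$-type estimate (again \Cref{lem:prohorov-point-measures}) then yields $\bx_1^{(n)}\to\bx_1$ in $\setE$. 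For such $n$ the single point $\bx_1^{(n)}\in B_r^c$ has norm $>r$, while every other point of $\pi_n$ lies in $B_r$ and hence has norm $\leq r<\pseudonorm{\bx_1^{(n)}}[E]$. Thus $\bx_1^{(n)}$ is the unique largest point of $\pi_n$, so $\pi_n\in\mcn_0^\sharp(\setE)$ and $T(\pi_n)=\bx_1^{(n)}$.

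This simultaneously settles both claims: no sequence from the complement of $\mcn_0^\sharp(\setE)$ can converge to a point of $\mcn_0^\sharp(\setE)$, so the complement is sequentially closed and $\mcn_0^\sharp(\setE)$ is open; and $T(\pi_n)=\bx_1^{(n)}\to\bx_1=T(\pi)$ gives sequential, hence (by metrizability) genuine, continuity of $T$ on $\mcn_0^\sharp(\setE)$. The only delicate point is converting the abstract $\mcm_0$-convergence into control of the \emph{single} largest point; this is exactly what the choice of a continuity radius inside the gap $(R_2,R)$ achieves, and that gap is strictly positive precisely because $\pi$ is boundedly finite with a unique maximal point. Everything else is bookkeeping with \Cref{lem:prohorov-point-measures}.
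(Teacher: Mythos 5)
Your argument is correct and is essentially the paper's own proof recast in sequential form: both isolate the gap between the norm of the unique largest point and the second-largest norm, restrict to $B_r^c$ for $r$ in that gap so that the restriction of $\pi$ is a single Dirac mass, and then use the Prohorov-distance control of \Cref{lem:prohorov-point-measures} to pin down the number and location of the points of any nearby measure. One small repair: the bound $\varrho_\setE(\delta_{\bx},\delta_{\by})\leq \dist[\setE](\bx,\by)\wedge 1$ goes the wrong way for deducing $\bx_1^{(n)}\to\bx_1$, so conclude instead directly from the weak convergence $\delta_{\bx_1^{(n)}}\stackrel{w}{\longrightarrow}\delta_{\bx_1}$ by testing against $f(\cdot)=\dist[\setE](\bx_1,\cdot)\wedge 1$.
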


\begin{proof}
  Let $\pi\in\mcn_0^\sharp(\setE)$ and $m = \largestpoint{\pi}[E]$.  Then there exists $\eta>0$ such
  that $\pi$ has exactly one point in $B(T(\pi),\eta)$.  A point measure $\pi'\in\mcn_0(\setE)$ has either zero
  point or 1 or at least two points in $B_{m-\eta}^c$. By \Cref{lem:prohorov-point-measures}, in the
  first and last cases, $\rho_r(\pi,\pi') \wedge1 =1$, hence
  \begin{align*}
    \rho(\pi,\pi') \geq  \int_{m-\eta}^\infty \rme^{-r}   \rmd r \geq \rme^{-m} \; .
  \end{align*}
  Thus, if $\rho(\pi,\pi') < \rme^{-m}$, then $\pi'$ has exactly one point in $B_{m-\eta}^c$, which
  is therefore its single largest point and $\pi'\in\mcn_0^\sharp(\setE)$. This proves that
  $\mcn_0^\sharp(\setE)$ is open. By \Cref{lem:prohorov-point-measures} again, for $r>m-\eta$, we
  have $\rho_r(\pi,\pi') \wedge1 =\dist[E](T(\pi),T(\pi'))\wedge1$ thus
  \begin{align*}
    \rho(\pi,\pi') \geq  \int_{m-\eta}^\infty \rme^{-r} (\dist[E](T(\pi),T(\pi'))\wedge1)  \rmd r 
    \geq  (\dist[E](T(\pi),T(\pi'))\wedge1) \rme^{-m} \; .
  \end{align*}
  This proves that $T$ is continuous at $\pi$.
\end{proof}

\begin{lemma}
  \label{lem:continuity-distF}
  Let $(\setS,\dist[S])$ be a metric space and $g:\setS\to\setF=\seqspace\setE$. Then $S$ is continuous
  with respect to the distance $\dist[F]$ defined in~(\ref{eq:distF}) if and only if
  $g_j:\setS\to\setE$ defined by $g_j(\bs) = (g(\bs))_j$ for $\bs\in\setS$ is continuous for all
  $j\in\Zset$.
\end{lemma}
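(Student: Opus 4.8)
The statement is exactly the standard fact that $\dist[F]$ metrizes the product topology on $\seqspace\setE$, so that a map into this space is continuous if and only if all of its coordinates are. The plan is to prove the two implications separately: for the forward implication I would use that the coordinate projections are continuous, and for the converse a direct $\epsilon$--$\delta$ estimate exploiting the summability of the weights $2^{-|j|}$ together with the truncation $\wedge 1$.

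For the direct implication, I would first record that each coordinate projection $\pi_j:\setF\to\setE$, $\bx\mapsto\bx_j$, is continuous. Indeed, the defining formula~(\ref{eq:distF}) gives $2^{-|j|}(\dist[\setE](\bx_j,\by_j)\wedge1)\le\dist[F](\bx,\by)$, hence $\dist[\setE](\bx_j,\by_j)\wedge1\le 2^{|j|}\dist[F](\bx,\by)$; letting $\dist[F](\bx,\by)\to0$ forces $\dist[\setE](\bx_j,\by_j)\to0$. Writing $g_j=\pi_j\circ g$, continuity of $g$ then yields continuity of every $g_j$ as a composition of continuous maps.

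For the converse, assume every $g_j$ is continuous and fix $\bs\in\setS$ and $\epsilon>0$. The key quantitative input is that $\sum_{j\in\Zset}2^{-|j|}=3<\infty$, so I would first choose $N$ with $\sum_{|j|>N}2^{-|j|}<\epsilon/2$; because each truncated term is at most $1$, the tail of $\dist[F](g(\bs),g(\bs'))$ beyond index $N$ is then bounded by $\epsilon/2$ uniformly in $\bs'$. For the finitely many indices $|j|\le N$, continuity of $g_j$ provides $\delta_j>0$ such that $\dist[S](\bs,\bs')<\delta_j$ makes $\dist[\setE](g_j(\bs),g_j(\bs'))$ so small that $\sum_{|j|\le N}2^{-|j|}(\dist[\setE](g_j(\bs),g_j(\bs'))\wedge1)<\epsilon/2$ (e.g. forcing each such distance below $\epsilon/6$ suffices, since $\sum_{|j|\le N}2^{-|j|}\le 3$). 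Taking $\delta=\min_{|j|\le N}\delta_j>0$ and splitting the sum in~(\ref{eq:distF}) over $|j|\le N$ and $|j|>N$ gives $\dist[F](g(\bs),g(\bs'))<\epsilon$ whenever $\dist[S](\bs,\bs')<\delta$, which is the desired continuity of $g$ at $\bs$.

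I do not expect any genuine obstacle here: the result is the metric incarnation of continuity into a countable product. The only point requiring a little care is the uniform control of the infinite tail of the series, which is precisely what the truncation $\wedge1$ and the summable weights $2^{-|j|}$ deliver, the finitely many leading coordinates then being handled by the hypothesis one at a time.
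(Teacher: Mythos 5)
Your proposal is correct and follows essentially the same route as the paper: the forward implication via continuity of the coordinate projections (which the paper dismisses as trivial), and the converse by splitting the sum in~(\ref{eq:distF}) at a finite index chosen so that the summable weights $2^{-|j|}$ together with the truncation $\wedge 1$ control the tail uniformly, the finitely many remaining coordinates being handled by the continuity of each $g_j$. The quantitative constants differ slightly from the paper's but the argument is identical in substance.
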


\begin{proof}
  The direct implication is trivial. We prove the converse. Assume that $g_j$ is continuous for all
  $j$. Fix $s_0\in\setS$, $\epsilon\in(0,1)$ and choose $K$ such that $2^{-K} \leq \epsilon/4$.  By
  assumption, there exists $\eta$ (which depends on $\epsilon$ and $K$) such that for all
  $j\in\{-K,\dots,K\}$ and $\bs\in\setS$ such $\dist[S](\bs_0,\bs)\leq \eta$,
  $\dist[E](g_j(\bs_0),g_j(\bs))\leq\epsilon/2$. This yields
  \begin{align*}
    \dist[F](g(\bs_0),g(\bs)) = \sum_{j\in\Zset} 2^{-|j|} \dist[E](g_j(\bs_0),g_j(\bs))\wedge1 
    \leq \frac{\epsilon}6 \sum_{|j|\leq K} 2^{-|j|} + \sum_{|j|> K} 2^{-|j|} \leq \epsilon \; .
  \end{align*}

\end{proof}

\paragraph{Acknowledgement} 
 Two anonymous referees and the associate editors are acknowledged for various suggestions that have improved the paper; in particular the introduction of the Hopf decomposition in Section 2.5 was suggested by a referee.

\bibliography{bib}

\begin{thebibliography}{}

\bibitem[Aaronson, 1997]{A98}
Aaronson, J. (1997).
\newblock {\em An introduction to infinite ergodic theory}, volume~50 of {\em
  Mathematical Surveys and Monographs}.
\newblock American Mathematical Society, Providence, RI.

\bibitem[Basrak et~al., 2012]{MR3025708}
Basrak, B., Krizmani\'c, D., and Segers, J. (2012).
\newblock A functional limit theorem for dependent sequences with infinite
  variance stable limits.
\newblock {\em Ann. Probab.}, 40(5):2008--2033.

\bibitem[Basrak and Planini\'c, 2018]{BP18}
Basrak, B. and Planini\'c, H. (2018).
\newblock A note on vague convergence of measures.
\newblock {\em https://arxiv.org/abs/1803.07024}.

\bibitem[Basrak and Segers, 2009]{basrak:segers:2009}
Basrak, B. and Segers, J. (2009).
\newblock Regularly varying multivariate time series.
\newblock {\em Stochastic Processes and their Applications}, 119(4):1055--1080.

\bibitem[Basrak and Tafro, 2016]{MR3535966}
Basrak, B. and Tafro, A. (2016).
\newblock A complete convergence theorem for stationary regularly varying
  multivariate time series.
\newblock {\em Extremes}, 19(3):549--560.

\bibitem[Buhl and Kl\"uppelberg, 2016]{KLP1}
Buhl, S. and Kl\"uppelberg, C. (2016).
\newblock Anisotropic {B}rown-{R}esnick space-time processes: estimation and
  model assessment.
\newblock {\em Extremes}, 19(4):627--660.

\bibitem[Chernick et~al., 1991]{chernick:hsing:mccormick:1991}
Chernick, M.~R., Hsing, T., and McCormick, W.~P. (1991).
\newblock Calculating the extremal index for a class of stationary sequences.
\newblock {\em Advances in Applied Probability}, 23(4):835--850.

\bibitem[Daley and Vere-Jones, 2003]{daley:vere-jones:bookvolI}
Daley, D.~J. and Vere-Jones, D. (2003).
\newblock {\em An introduction to the theory of point processes. {V}ol. {I}}.
\newblock Probability and its Applications (New York). Springer-Verlag, New
  York, second edition.
\newblock Elementary theory and methods.

\bibitem[Davis and Hsing, 1995]{davis:hsing:1995}
Davis, R.~A. and Hsing, T. (1995).
\newblock Point process and partial sum convergence for weakly dependent random
  variables with infinite variance.
\newblock {\em The Annals of Probability}, 23(2):879--917.

\bibitem[Davis et~al., 2013]{KLP2}
Davis, R.~A., Kl\"uppelberg, C., and Steinkohl, C. (2013).
\newblock Statistical inference for max-stable processes in space and time.
\newblock {\em J. R. Stat. Soc. Ser. B. Stat. Methodol.}, 75(5):791--819.

\bibitem[de~Haan, 1984]{dH84}
de~Haan, L. (1984).
\newblock A spectral representation for max-stable processes.
\newblock {\em Ann. Probab.}, 12(4):1194--1204.

\bibitem[Debicki and Hashorva, 2016]{debicki:hashorva:2016}
Debicki, K. and Hashorva, E. (2016).
\newblock Extremal indices of max-stable stationary processes.
\newblock Unpublished manuscript.

\bibitem[Dieker and Mikosch, 2015]{DiM}
Dieker, A.~B. and Mikosch, T. (2015).
\newblock Exact simulation of {B}rown-{R}esnick random fields at a finite
  number of locations.
\newblock {\em Extremes}, 18:301--314.

\bibitem[Dombry and Kabluchko, 2017]{DK2016}
Dombry, C. and Kabluchko, Z. (2017).
\newblock Ergodic decompositions of stationary max-stable processes in terms of
  their spectral functions.
\newblock {\em Stochastic Processes and their Applications}, 127(6):1763--1784.

\bibitem[Dudley, 2002]{dudley:2002}
Dudley, R.~M. (2002).
\newblock {\em Real analysis and probability}, volume~74 of {\em Cambridge
  Studies in Advanced Mathematics}.
\newblock Cambridge University Press, Cambridge.
\newblock Revised reprint of the 1989 original.

\bibitem[Hashorva, 2016]{Htilt}
Hashorva, E. (2016).
\newblock Representations of max-stable processes via exponential tilting.
\newblock {\em https://arxiv.org/abs/1605.03208}.

\bibitem[Hult and Lindskog, 2006]{hult:lindskog:2006}
Hult, H. and Lindskog, F. (2006).
\newblock Regular variation for measures on metric spaces.
\newblock {\em Publ. Inst. Math. (Beograd) (N.S.)}, 80(94):121--140.

\bibitem[{Jan{\ss}en}, 2017]{janssen:2017}
{Jan{\ss}en}, A. (2017).
\newblock {Spectral tail processes and max-stable approximations of
  multivariate regularly varying time series}.
\newblock arXiv:1704.06179.

\bibitem[Jan{\ss}en et~al., 2018]{janssen2018eigenvalues}
Jan{\ss}en, A., Mikosch, T., Rezapour, M., Xie, X., et~al. (2018).
\newblock The eigenvalues of the sample covariance matrix of a multivariate
  heavy-tailed stochastic volatility model.
\newblock {\em Bernoulli}, 24(2):1351--1393.

\bibitem[Kallenberg, 2017]{kallenberg:2017}
Kallenberg, O. (2017).
\newblock {\em Random measures, theory and applications}, volume~77 of {\em
  Probability Theory and Stochastic Modelling}.
\newblock Springer, Cham.

\bibitem[Kulik, 2016]{kulik:2016}
Kulik, R. (2016).
\newblock Editorial: special issue on time series extremes.
\newblock {\em Extremes}, 19(3):463--466.

\bibitem[Lindskog et~al., 2014]{LRR14}
Lindskog, F., Resnick, S.~I., and Roy, J. (2014).
\newblock Regularly varying measures on metric spaces: hidden regular variation
  and hidden jumps.
\newblock {\em Probab. Surv.}, 11:270--314.

\bibitem[Mikosch and Wintenberger, 2016]{MR3547739}
Mikosch, T. and Wintenberger, O. (2016).
\newblock A large deviations approach to limit theory for heavy-tailed time
  series.
\newblock {\em Probab. Theory Related Fields}, 166(1-2):233--269.

\bibitem[Owada and Samorodnitsky, 2012]{samorodnitsky:owada:2012}
Owada, T. and Samorodnitsky, G. (2012).
\newblock Tail measures of stochastic processes or random fields with regularly
  varying tails.
\newblock preprint.

\bibitem[Planini\'c and Soulier, 2018]{planinic:soulier:2017}
Planini\'c, H. and Soulier, P. (2018).
\newblock The tail process revisited.
\newblock {\em Extremes, doi:10.1007/s10687-018-0312-1}.

\bibitem[Segers et~al., 2017]{segers:zhao:meinguet:2017}
Segers, J., Zhao, Y., and Meinguet, T. (2017).
\newblock Polar decomposition of regularly varying time series in star-shaped
  metric spaces.
\newblock {\em Extremes}, 20(3):539--566.

\bibitem[Smith and Weissmannn, 1996]{smith:weissman:1996}
Smith, R.~L. and Weissmannn (1996).
\newblock Characterization and estimation of the multivariate extremal index.
\newblock Preprint, available on
  http://www.stat.unc.edu/postscript/rs/extremal.pdf.

\bibitem[{Zhao}, 2016]{zhao:2016}
{Zhao}, Y. (2016).
\newblock {Point processes in a metric space}.
\newblock {\em ArXiv e-prints}.

\end{thebibliography}

\end{document}